\documentclass[a4paper]{article}
\usepackage[all]{xy}
\usepackage[latin1]{inputenc}        
\usepackage{graphics,graphicx}
\usepackage{amsfonts,amssymb,amsmath,color,mathrsfs,amstext,bm}
\usepackage{amsbsy, amsopn, amscd, amsxtra, amsthm,authblk}
\usepackage{enumerate,algorithmic,algorithm,adjustbox}
\usepackage{upref}
\usepackage{geometry}
\usepackage[mathlines]{lineno}
\usepackage{float}
\usepackage{yhmath}
\usepackage{booktabs}
\usepackage{subcaption}
\usepackage{multirow}
\usepackage{makecell}
\usepackage[colorlinks,
            linkcolor=red,
            anchorcolor=red,
            citecolor=red
            ]{hyperref}

\numberwithin{equation}{section}
\numberwithin{figure}{section}
\numberwithin{table}{section}

\def\R{\mathbb{R}}
\def\P{\mathbb{P}}

\newcommand{\expVariableNoise}{1D Variable Noise}
\newcommand{\expRareEvent}{1D Rare Event}
\newcommand{\expConstantNoise}{2D Constant Noise}
\newcommand{\expInteractingParticle}{64D Interacting Particle}

\newtheorem{theorem}{Theorem}[section]
\newtheorem{lemma}{Lemma}[section]
\newtheorem{proposition}{Proposition}[section]
\newtheorem{remark}{Remark}[section]

\newtheorem{definition}{Definition}[section]
\newtheorem{assumption}{Assumption}[section]
\newenvironment{keywords}{\par\smallskip\noindent\textbf{Keywords.} }{\par}
\newenvironment{MSCcodes}{\par\smallskip\noindent\textbf{MSC 2020.} }{\par}

\makeatletter
\@mparswitchfalse
\makeatother

\title{Deep operator learning for efficient sampling from invariant measures of stochastic differential equations}

\author[a]{
Ling Guo \thanks{Email: lguo@shnu.edu.cn}
}
\author[b,c]{
Lei Li \thanks{Email: leili2010@sjtu.edu.cn}
}
\author[b]{
Jingtong Zhang \thanks{Email: 
sffred@sjtu.edu.cn}
}

\affil[a]{Department of Mathematics, Shanghai Normal University, Shanghai, China}
\affil[b]{School of Mathematical Sciences,  Shanghai Jiao Tong University, Shanghai, China
}
\affil[c]{Institute of Natural Sciences, MOE-LSC, Shanghai Jiao Tong University, Shanghai, China}

\date{}

\begin{document}

\maketitle

\begin{abstract}
We introduce an amortized neural sampler that combines operator learning with flow methods for sampling. It maps SDE coefficient functions to pushforwards from a reference measure to the invariant measures, enabling efficient sampling across families of stochastic differential equations. Our framework shifts traditional sampling cost to an initial training phase, after which new SDE instances require only one encoder pass and a few ODE solver steps, independent of mixing time. To handle problems in high dimensions, we use Lagrangian trajectory sensors for the coefficient functions and cross attention in the architecture. We also theoretically establish the expressivity and resolution invariance of our framework. Experiments on 1D and 2D SDE families show competitive accuracy with substantial speedups over MCMC in regimes with slow mixing, transfer across sensor counts, and demonstration results on a 64D interacting particle SDE where traditional grid approaches are infeasible. 
\end{abstract}

\begin{keywords}
operator learning, invariant measures, stochastic differential equations,
amortized sampling, flow matching, uncertainty quantification
\end{keywords}

\begin{MSCcodes}
60H10, 65C05, 68T07
\end{MSCcodes}

\section{Introduction}\label{sec:intro}

Sampling from the invariant measures of Stochastic Differential Equations (SDEs) is a fundamental task in applied mathematics, statistical physics, statistics and computer science, which enables the computation of stationary expectations, free energies and rare event probabilities  for uncertainty quantification and stochastic control in related applications \cite{mattingly_ergodicity_2002,pavliotis_stochastic_2014,leimkuhler_matthews_2015}.
Traditional methods, such as Markov Chain Monte Carlo (MCMC) \cite{metropolis_equation_1953,hastings_monte_1970,robert_monte_2004}, rely on sequential simulations of the dynamics.   These methods are intrinsically sequential and generally produce correlated samples. They require an initial equilibration period and a sufficiently long simulation to approach equilibrium, while time discretization may introduce an additional bias. More importantly, for multimodal or metastable systems, transitions between separated regions can result in a prohibitively large mixing time and a small effective sample size. When the drift or diffusion coefficients vary across many problem instances, a new long simulation is usually required for every instance \cite{robert_monte_2004}, and this repeated mixing cost can become the dominant computational bottleneck.

Operator learning \cite{lu_deeponet_2021,li_fourier_2020,kovachki_neural_2023}, which directly learns the map between functions, is gaining interest in scientific machine learning, and   provides a natural framework for amortizing this cost over a family of stochastic systems. Consider the coefficient-indexed Stochastic Differential Equation (SDE) taking values in $\R^d$
\begin{equation}\label{eq:sde}
dX = b(X)\,dt+\sigma(X)\,dW,
\end{equation}
with $X_0\sim\mu_0$, drift $b:\mathbb{R}^d\to\mathbb{R}^d$, diffusion coefficient $\sigma:\mathbb{R}^d\to\mathbb{R}^{d\times d_W}$, and standard Brownian motion $W$ in $\mathbb{R}^{d_W}$. Both dimensions are fixed within each coefficient family.
Write $a:=(b,\sigma)$ for the pair of SDE coefficients throughout this paper, and denote its invariant measure by \(\pi_{a}\). Rather than constructing a separate sampler for each  \(a\), one may seek a generative sampler operator \(\mathcal{S}\) satisfying $\mathcal{S}(a,\cdot)_{\#}\gamma \approx \pi_{a}$,
where \(\gamma\) is a simple reference distribution and \(\#\) denotes the pushforward of measures. Thus, \(\mathcal{S}\) takes the drift and diffusion functions, together with a latent random variable, as inputs and returns a sample from the corresponding invariant law.  This provides a natural operator learning problem because the inputs \(a\) are functions. 
After an offline training stage over a suitable family of coefficients, such an operator could be evaluated on previously unseen SDEs and generate samples in parallel without repeating a long mixing simulation for every new query.

The generative sampler has been extensively studied in recent years, including normalizing flows and the continuous versions \cite{rezende_variational_2015,chen_neural_2018}; Boltzmann generators \cite{noe_boltzmann_2019}; score-based diffusion models \cite{song_score-based_2021}. The flow matching approach \cite{lipman_flow_matching_2023,lipman_flow_2024} provides a regression framework for training continuous normalizing flows by learning a target velocity field, while avoiding adversarial objectives. FlowKac uses temporal normalizing flows and the Feynman--Kac formula for Fokker--Planck equations \cite{elbekri_flowkac_2025}; the weak adversarial networks enforce the weak equation through a minimax problem involving trial and test networks \cite{zang_weak_adversarial_2020}, whereas the weak generative sampler uses randomized test functions to train a transport map directly and avoids the adversarial minimax optimization \cite{cai_wgs_sde_2024}. 
Although these methods can substantially reduce the cost of sampling a prescribed target, they are typically trained for a fixed distribution or a single stochastic system.  
The objective here is instead to combine generative sampling with operator learning.
Let us have also brief review of the neural operators. Representative neural operators include DeepONet \cite{lu_deeponet_2021}, which encodes input functions through branch networks and evaluates outputs through a trunk network; the Fourier Neural Operator \cite{li_fourier_2020,kovachki_neural_2023}, which parameterizes integral operator layers in the Fourier domain; multi-input operators (MIONet) \cite{jin_mionet_2022}, which accommodates several functional inputs through multiple branch networks; and Variable-Input Deep Operator Networks (VIDON) \cite{prasthofer_vidon_2022}, which allows the number and locations of input sensors to vary between samples.  Recently, generative neural operators have also been extended to learning probability laws on function spaces, first through adversarial neural operators \cite{rahman_gano_2022} and more recently through operator flow matching for stochastic process priors \cite{shi_ofm_2025,huang_lai_mfg_operator_2025}. Huang and Lai use an empirical measure limit for sample-based attention \cite{huang_lai_mfg_operator_2025}. Continuum attention also interprets discrete attention as an approximation of a population operator \cite{calvello_continuum_attention_2024}.

We propose an \textbf{amortized neural sampler} that uses neural operators and the flow matching \cite{lipman_flow_matching_2023}  to construct a reusable map from previously unseen drift and diffusion functions of the SDE \eqref{eq:sde} to samples from their corresponding invariant measures. 
Let $\mathcal{A}$ denote the space of admissible coefficient pairs. We assume $a$ is drawn from a distribution $\pi$ over $\mathcal{A}$, and we have access to existing samples $\{a_i, x_i\}$ generated via MCMC or other experimental methods. We want to learn a neural operator $v_\theta$ that functions as a conditional velocity field:
\begin{equation}\label{eq:flowode}
\frac{dx}{dt}=v_\theta[a](x, t),
\end{equation}
and apply it to unseen $a=(b, \sigma)$ to generate samples from the steady state distribution of the SDE defined by $a$.  Since the inference procedure requires evaluating $v_\theta$ multiple times to solve the ODE~\eqref{eq:flowode}, we adopt an encoder-decoder design: for a given pair $a=(b, \sigma)$, {\it the input functions are encoded once into a latent representation, which is then reused across all ODE integration steps}, thereby reducing the computational cost at inference time. We use the Perceiver architecture \cite{jaegle_perceiver_2021} to aggregate the embeddings of random Lagrangian trajectory sensors into a context vector of fixed dimension.

The main contributions of this work are as follows:
\begin{itemize}
    \item We formulate an operator-conditioned amortized sampler for parametric families of SDEs, which combines neural operator encoders with continuous normalizing flows. The high cost of training is incurred once, and the trained model then generates approximate invariant measure samples for unseen coefficient instances at an inference cost decoupled from the mixing time of the system.

    \item An architecture with attention uses random Lagrangian trajectory sensors for problems in high dimensions. The architecture accepts variable sensor counts during training and inference. The random trajectory sensors do not require a mesh and support flexible domains.

    \item We prove theoretically the expressivity of the operator samplers for compact sets of admissible coefficient functions, and also the resolution invariance properties for the proposed random Lagrangian attention sampler. We first establish that the continuity of the map from the coefficient functions to the flow that transforms a reference measure to the target measure.  The resolution invariance of random Lagrangian sensors with attention aggregation is established by uniform law of large numbers for compact attention classes.
\end{itemize}

The remainder of this paper is organized as follows. Section~\ref{sec:setup} introduces the mathematical setting and section \ref{sec:framework} presents the general framework. Section~\ref{sec:perceiver} describes the Lagrangian attention sampler in detail. Section~\ref{sec:theoryexpress} provides the theoretical analysis for the expressivity and section \ref{sec:resoinvar} establishes the resolution invariance. 
Lastly, Section~\ref{sec:experiments} reports the experimental evaluation.

\section{Problem Setting}\label{sec:setup}

We consider the problem of sampling from the invariant measures of a parametric family of SDEs of the form \eqref{eq:sde}, where the parameter is a set of functions (e.g., a coefficient field, a boundary condition, or a potential). In this work, the parameter is taken to be the coefficient functions $a=(b,\sigma)$.
The law $\mu_t$ of $X_t$ satisfies the following Fokker--Planck equation
\begin{gather}\label{eq:fp}
\partial_t\mu_t=-\nabla\cdot(b(x)\mu_t)+\frac{1}{2}\nabla^2:(\sigma(x)\sigma^\top(x)\mu_t).
\end{gather}
An invariant measure of the SDE is a stationary solution of the Fokker--Planck equation \eqref{eq:fp}. So the problem can be formulated as learning the map from the parameter $a=(b,\sigma)$ to the stationary solution of the Fokker-Planck equation \eqref{eq:fp}. Under standard dissipativity and regularity conditions on $(b,\sigma)$ --- made precise in Section~\ref{sec:theoryexpress} --- the SDE admits a unique invariant probability measure $\mu_a \in \mathcal{P}_2(\mathbb{R}^d)$, and the map $a \mapsto \mu_a$ is continuous \cite{bogachev2022fokker}. 

Since the object to be approximated is a probability distribution, we need a metric on distributions, and we use the quadratic Wasserstein distance throughout. On the space $\mathcal{P}_2(\mathbb{R}^d)$ of probability measures with finite second moment, it is defined by
\[
W_2(\mu, \nu) = \left(\inf_{\pi \in \Pi(\mu, \nu)} \int_{\mathbb{R}^d \times \mathbb{R}^d} |x - y|^2\, d\pi(x, y)\right)^{1/2},
\]
where $\Pi(\mu, \nu)$ is the set of couplings of $\mu$ and $\nu$, i.e.\ joint distributions on $\R^d\times \R^d$ with marginals $\mu$ and $\nu$. The pair $(\mathcal{P}_2(\mathbb{R}^d), W_2)$ is a complete metric space, and convergence in $W_2$ is equivalent to weak convergence together with convergence of second moments \cite{villani2008optimal}. The metric plays a double role in what follows: the approximation guarantees of Section~\ref{sec:theoryexpress} and Section~\ref{sec:resoinvar} are stated in $W_2$, and the experiments of Section~\ref{sec:experiments} use it as the quantitative quality criterion for the trained sampler.

It remains to state precisely what ``learning a sampler'' means for a family of SDEs. Fix a probability distribution $\pi$ on $\mathcal{A}$ representing the family of interest, e.g.\ a class of drift fields drawn from a prescribed prior. In its classical, one instance form, the sampling problem takes one fixed coefficient pair $a=(b,\sigma)$ and asks for i.i.d.\ samples $X^{(1)},\dots,X^{(N)}$ whose empirical law approximates $\mu_a$. MCMC and sequential Monte Carlo solve the problem in this form, one $a$ at a time, so each new coefficient pair triggers an independent --- and, in slow mixing regimes, long --- simulation of the dynamics. We consider instead the \emph{amortized} form of the problem: construct a single map
\[
\mathcal{S} : \mathcal{A} \times \mathbb{R}^d \to \mathbb{R}^d,\qquad (a,z) \mapsto \mathcal{S}(a,z),
\]
where $z$ is a latent variable drawn from a fixed reference distribution $\gamma$ on $\mathbb{R}^d$ (typically $\gamma = \mathcal{N}(0,I_d)$), such that the pushforward $\hat\mu_a := \mathcal{S}(a,\cdot)_\#\gamma$ approximates $\mu_a$ for every $a$ in the support of $\pi$. The quality of $\mathcal{S}$ is measured by the population error
\begin{equation}\label{eq:amortized_objective}
\mathcal{E}(\mathcal{S}) \;=\; \mathbb{E}_{a \sim \pi}\bigl[W_2(\hat\mu_a, \mu_a)\bigr].
\end{equation}
The amortized formulation pays an upfront construction cost in exchange for cheap evaluation: once $\mathcal{S}$ is built, sampling from $\mu_a$ for a previously unseen $a$ requires only forward evaluations of $\mathcal{S}$, with no per-instance simulation of the underlying SDE. 

\section{The general framework for neural operator sampler}\label{sec:framework}


We now explain our parametric sampler framework promised by Section~\ref{sec:setup}. We formalize the \textbf{neural operator sampler} as follows.

\begin{definition}[Neural Operator Sampler]
Let $\mathbb{R}^d$ be a state space, and $\mathcal{A}$ be a Banach space of input functions. A Neural Operator Sampler is a parameterized mapping
\begin{equation}
\mathcal{S}_\theta: \mathcal{A} \times \mathbb{R}^d \to \mathbb{R}^d,
\end{equation}
where $\theta\in\Theta$ denotes the learnable parameters (weights). For a given input $a=(b,\sigma) \in\mathcal{A}$, the map $\mathcal{S}_\theta(a,\cdot)$ pushes forward the reference measure $\gamma$ (e.g., standard Gaussian) to a target distribution $\hat\mu_a\in\mathcal{P}(\mathbb{R}^d)$.
\end{definition}

To construct the map $\mathcal{S}_\theta$, we adopt the Continuous Normalizing Flow (CNF) paradigm and learn the transport dynamics from reference samples to target samples. We introduce a Neural Operator $\mathcal{N}_\theta$ that approximates the velocity field governing the transport from the reference measure $\gamma$ to $\mu_a$,
\begin{gather}
\mathcal{N}_\theta: \mathcal{A} \to C([0,1] \times \mathbb{R}^d; \mathbb{R}^d).
\end{gather}
For a specific input $a$, the operator predicts a velocity field
\begin{gather}
v_a(x,t) = \mathcal{N}_\theta(a)(x,t).
\end{gather}
The sampler $\mathcal{S}_\theta(a, z)$ is then defined as the solution $\phi(1)$ of the ODE
\begin{equation}\label{eq:cnf_ode}
\frac{d\phi}{dt} = v_a(\phi(t), t), \quad \phi(0) = z \sim \gamma.
\end{equation}
We use the alternative notation $v[b,\sigma](\cdot,\cdot)$ to indicate the functional dependence on $b$ and $\sigma$.

A natural question is whether such a CNF exists for each coefficient pair $a$, and whether some generating velocity field can serve as a regression target during training. We recall two classical existence mechanisms for such flows: the Benamou--Brenier optimal transport flow, which is exact but depends on the optimal coupling, unavailable from data; and the marginal flow matching construction, which builds a valid transport flow from any coupling of source and target samples and underlies our training objective.


\begin{enumerate}[(1)]
\item Benamou--Brenier optimal transport flow.

The dynamic (Benamou--Brenier) formulation of optimal transport~\cite{benamou_computational_2000} looks for a den\-sity--velocity pair $(\rho_t,v_t)$ that depends on time and solves the constrained optimization problem
\begin{equation}\label{eq:bb}
W_2^2(\gamma,\mu_a) = \inf_{(\rho,v)}\left\{
\begin{aligned}
&\int_0^1\!\int_{\mathbb{R}^d}\|v(x,t)\|^2\,\rho(x,t)\,dx\,dt \;:\; \\
&\partial_t\rho+\nabla\cdot(\rho\, v)=0,\;\rho_0=\gamma,\;\rho_1=\mu_a
\end{aligned}
\right\}.
\end{equation}
The optimal velocity field $v^{\mathrm{BB}}$ transports $\gamma$ to $\mu_a$ along the $W_2$-geodesic with minimal kinetic energy. By Brenier's theorem~\cite{brenier1991polar}, when $\gamma$ is absolutely continuous there exists a unique $W_2$-optimal transport map $T=\nabla\psi$ (with $\psi$ convex) satisfying $T_\#\gamma=\mu_a$. The Benamou--Brenier geodesic is then realized by the linear interpolation
\[
X_t = (1-t)X_0 + t\,T(X_0), \qquad X_0\sim\gamma,
\]
with marginal velocity field
\[
v^{\mathrm{BB}}(x,t) = \mathbb{E}\!\left[T(X_0)-X_0 \mid X_t=x\right].
\]
The optimal transport construction thus provides an exact, geometrically natural flow from $\gamma$ to $\mu_a$. Its drawback is practical: it requires access to the optimal map or coupling for each target instance, which is generally not available during training --- our data consist of \emph{unpaired} samples from $\gamma$ and $\mu_a$.

\item Marginal flow matching construction.

A second existence mechanism is the one underlying flow matching, and it removes this requirement: it builds a valid transport flow from \emph{any} coupling $\eta_a\in\Pi(\gamma,\mu_a)$ of source and target samples --- in particular the independent coupling $\gamma\otimes\mu_a$, which requires only unpaired samples. The idea is to prescribe simple dynamics conditionally on a pair of endpoints and then average. For each pair $(x_0,x_1)$, choose a conditional interpolation path $\rho_t(\cdot\mid x_0,x_1)$ and a conditional velocity $u_t(\cdot\mid x_0,x_1)$ satisfying the conditional continuity equation
\[
\partial_t \rho_t(x\mid x_0,x_1)
+\nabla\cdot\!\left(\rho_t(x\mid x_0,x_1)u_t(x\mid x_0,x_1)\right)=0,
\]
with endpoints concentrated at $x_0$ and $x_1$. Averaging over the coupling gives a marginal path and a marginal velocity:
\[
\rho_t^a(x)=\int \rho_t(x\mid x_0,x_1)\,d\eta_a(x_0,x_1),
\]
\[
\rho_t^a(x)u_t^a(x)=\int \rho_t(x\mid x_0,x_1)u_t(x\mid x_0,x_1)\,d\eta_a(x_0,x_1),
\]
where $u_t^a$ is defined arbitrarily on the set where $\rho_t^a=0$. Integrating the conditional continuity equations yields
\[
\partial_t \rho_t^a+\nabla\cdot(\rho_t^a u_t^a)=0,\qquad \rho_0^a=\gamma,\quad \rho_1^a=\mu_a.
\]
Equivalently, for $X_t\sim \rho_t^a$,
\[
u_t^a(x)=\mathbb{E}\!\left[u_t(X_t\mid X_0,X_1)\mid X_t=x\right],
\]
which is the pointwise $L^2$ projection of the conditional velocity onto functions of the current state. This field has a dynamical role: it generates the marginal path and defines a valid transport flow from $\gamma$ to $\mu_a$. The conditional expectation form is also what makes the construction trainable: it characterizes $u_t^a$ as the minimizer of a regression problem whose samples require only draws from the coupling and the conditional path, which turns into the training objective (see the details below).
\end{enumerate}


Flow matching trains the velocity field by pointwise regression and avoids ODE solves inside each gradient step, giving it a lower online training cost than maximum likelihood CNF training. The price is that it requires training samples from the target invariant measures; in this work we focus on this supervised setting. Stochastic interpolants~\cite{albergo_stochastic_2023} provide a closely related formulation, and the expressivity theory of Section~\ref{sec:theoryexpress} uses a smoothed variant of the flow matching construction.

For a fixed coefficient pair $a$, the ideal marginal flow matching loss is
\begin{equation}\label{eq:fm_objective}
\mathcal{L}_{\mathrm{FM}}^a(\theta)
= \mathbb{E}_{t\sim\mathcal{U}[0,1]}\,\mathbb{E}_{x\sim \rho_t^a}
\bigl\|v_\theta[a](x,t)-u_t^a(x)\bigr\|^2.
\end{equation}
The marginal path $\rho_t^a$ and velocity $u_t^a$ are generally intractable. Conditional flow matching~\cite{lipman_flow_2024} replaces~\eqref{eq:fm_objective} with a tractable conditional regression problem. The common $x_1$-conditioned Gaussian path corresponds to the independent source--target coupling: for each target sample $x_1\sim\mu_a$, one defines a conditional probability path $\rho_t(\cdot\mid x_1)$ and a conditional velocity field $u_t(\cdot\mid x_1)$ satisfying
\[
\partial_t \rho_t(x\mid x_1)+\nabla\cdot\!\bigl(\rho_t(x\mid x_1)\,u_t(x\mid x_1)\bigr)=0, \qquad \rho_0(\cdot\mid x_1)=\gamma,\quad \rho_1(\cdot\mid x_1)\approx\delta_{x_1}.
\]
The resulting conditional flow matching objective
\begin{equation}\label{eq:cfm_objective}
\mathcal{L}_{\mathrm{CFM}}(\theta)
=\mathbb{E}_{a\sim\pi}\mathbb{E}_{t\sim\mathcal{U}[0,1]}\mathbb{E}_{x_1\sim\mu_a}
\mathbb{E}_{x\sim \rho_t(\cdot\mid x_1)}
\bigl\|v_\theta[a](x,t)-u_t(x\mid x_1)\bigr\|^2
\end{equation}
has the same gradients as the corresponding marginal loss for this
independent coupling path~\eqref{eq:fm_objective} \cite{lipman_flow_2024} and
requires only samples from the simple conditional path.

For the independent coupling, this paper uses the standard Gaussian linear conditional path
\begin{equation}\label{eq:ot_cond_path}
\rho_t(x\mid x_1)=\mathcal{N}\!\bigl(x\;\big|\;t\,x_1,\,(1-t)^2 I\bigr), \qquad u_t(x\mid x_1)=\frac{x_1-x}{1-t}.
\end{equation}
Drawing $X_0\sim\gamma=\mathcal{N}(0,I)$ independently of $X_1$ and setting $X_t=(1-t)X_0+tX_1$ gives the equivalent velocity target $u_t(X_t\mid X_1)=X_1-X_0$.

More generally, if a dependent source--target coupling $\eta_a\in\Pi(\gamma,\mu_a)$ is used to form training pairs, the conditional path should be viewed as conditioned on the pair $(X_0,X_1)$:
\[
    X_t=(1-t)X_0+tX_1,\qquad u_t(X_t\mid X_0,X_1)=X_1-X_0 .
\]
The corresponding supervised training loss is
\begin{equation}\label{eq:linear_cfm_objective}
\mathcal{L}_{\mathrm{CFM}}^{\mathrm{lin}}(\theta)
= \mathbb{E}_{a\sim\pi}\mathbb{E}_{(X_0,X_1)\sim\eta_a}\mathbb{E}_{t\sim\mathcal{U}[0,1]}
\Bigl[\bigl\|v_\theta[a](X_t,t)-(X_1-X_0)\bigr\|^2\Bigr].
\end{equation}
The default independent pairing corresponds to $\eta_a=\gamma\otimes\mu_a$; one may use an empirical or minibatch optimal transport coupling when one wants a closer approximation to the Benamou--Brenier geometry. With an optimal source--target coupling, the linear construction conditioned on pairs realizes the Benamou--Brenier geodesic~\eqref{eq:bb}; with independent source and target samples, it is a tractable flow matching interpolant. In either case, different couplings and interpolation paths lead to different valid training targets.



In order to mitigate the cost of inference the trained model multiple times to solve the flow ODE~\eqref{eq:cnf_ode}, we propose an encoder--decoder architecture to construct the neural operator $v_\theta$. The encoder maps the input functions $(b, \sigma)$ to a latent representation, and the decoder takes the latent code along with spatiotemporal coordinates $(x,t)$ to produce the velocity field. Different encoders and decoders can be used to fit different problems. Specifically, the calculation of $v_\theta[b, \sigma](x,t)$ may be decomposed into two steps:
\begin{align}
z & = \text{Encoder}(b, \sigma; \theta_e),\label{eq:encoder} \\
v_\theta[b, \sigma](x,t) & = \text{Decoder}(z, x, t; \theta_d),
\end{align}
where $\theta = (\theta_e, \theta_d)$ are the learnable parameters of encoder and decoder respectively.
In the subsequent sections, we will discuss several possible representations of the input functions $(b,\sigma)$ and the corresponding encoder and decoders.

Training uses the conditional flow matching loss in~\eqref{eq:linear_cfm_objective}. At inference, we evaluate the model multiple times to solve the ODE \eqref{eq:flowode}. The latent representation \eqref{eq:encoder} is computed once per coefficient pair $a=(b, \sigma)$, which reduces time and memory cost.
The complete training and inference procedures are summarized in Algorithm~\ref{alg:training_inference}.

\begin{algorithm}[htbp]
\caption{Amortized Neural Sampler using flow matching loss: Training and Inference}\label{alg:training_inference}
\begin{algorithmic}[1]
\STATE \textbf{Input:} Training set $\mathcal{D}=\{(a_i, \{x_i^{(j)}\}_{j=1}^{N_s})\}_{i=1}^{N_f}$ with $a_i=(b_i,\sigma_i)$; reference measure $\gamma$; ODE solver with $K$ steps.
\STATE \textbf{Output:} Trained parameters $\theta=(\theta_e,\theta_d)$.
\STATE \textit{\% Training}
\FOR{each minibatch}
  \STATE Sample coefficients $a\sim\pi$, target $X_1\sim\mu_a$, noise $X_0\sim\gamma$, time $t\sim U[0,1]$.
  \STATE Compute interpolant $X_t = (1-t)X_0 + tX_1$.
  \STATE Encode: $z \gets \mathrm{Encoder}(a;\,\theta_e)$.
  \STATE Predict velocity: $\hat{v} \gets \mathrm{Decoder}(z, X_t, t;\,\theta_d)$.
  \STATE Update $\theta$ by $\nabla_\theta \|\hat{v} - (X_1 - X_0)\|^2$.
\ENDFOR

\STATE \textit{\% Inference (for a new, unseen $a^*$)}
\STATE Encode once: $z^* \gets \mathrm{Encoder}(a^*;\,\theta_e)$.
\STATE Draw $X_0 \sim \gamma$.
\FOR{$k = 1,\dots,K$}  
  \STATE ODE integration (e.g.\ Euler, RK45): $X_{t_{k}} \gets X_{t_{k-1}} + \Delta t_k\;\mathrm{Decoder}(z^*, X_{t_{k-1}}, t_{k-1};\,\theta_d)$.
\ENDFOR
\STATE Return $X_1$ as a sample from the learned invariant measure $\hat\mu_{a^*}$.
\end{algorithmic}
\end{algorithm}



\section{A Lagrangian attention operator sampler}\label{sec:perceiver}

The classical grid approach implements the encoder--decoder framework with a Multi-input DeepONet~\cite{jin_mionet_2022}.
At $m$ grid sensors, we record $b_j=b(x_j)\in\mathbb{R}^d$ and $\sigma_j=\sigma(x_j)\in\mathbb{R}^{d\times d_W}$. We retain $p\le d$ rows of $\sigma$, omitting only rows that vanish for every coefficient instance and position. Stacking each retained row across sensors gives the input tensors:
\[
\hat b \in \mathbb{R}^{m\times d}, \quad \hat \sigma_k \in \mathbb{R}^{m\times d_W},\quad k=1,\dots,p.
\]

The velocity field is assembled via the tensor product structure of the Multi-input DeepONet \cite{jin_mionet_2022}: 
\begin{equation}
v_\theta[b, \sigma](x,t) = \mathcal{G}(\hat b,\hat\sigma_k,x, t; \theta) = \sum_{l=1}^q f^0_l(\hat b)\odot f^1_l(\hat\sigma_1)\odot\cdots\odot f^{p}_l(\hat\sigma_p)\odot u_l(\hat x,\hat t),
\end{equation}
where $\odot$ denotes the elementwise (Hadamard) product, and \(\hat{x},\hat{t}\) are \(x, t\) embedded via a fixed sinusoidal map.  In the notation of Section~\ref{sec:framework}, the Branch nets collectively form the encoder 
\[z = (f^0(\hat b), f^1(\hat\sigma_1), \ldots, f^p(\hat\sigma_p)),\]
and the Trunk net together with the product-sum formula form the decoder.

For problems in high dimensions a grid of sensors is infeasible: the number of grid points grows exponentially in the dimension $d$, so the dense Eulerian sampling of $b(x),\sigma(x)$ used by classical neural operators~\cite{kovachki_neural_2023,li_fourier_2020} exceeds a realistic budget. Motivated by VIDON for random point sensors \cite{prasthofer_vidon_2022}, we aggregate information from \textbf{random Lagrangian sensors}---mobile probe trajectories that follow a cheap proxy of the dynamics---and encode them with attention. See also \cite{feng_learn_evolve_2026} for related ideas.
Because the encoder reads the sensors through their \emph{empirical measure}, it ingests a variable number of sensors $m$ and runs at a sensor count $m'\neq m$ without retraining. As $m$ grows, the output law of the model with fixed weights converges---in expectation over the random probes---to its own population-probe limit (Section~\ref{sec:resoinvar}); increasing $m$ reduces the Monte Carlo error from the finite probe set relative to that limit.

The full architecture is shown in Figure~\ref{fig:arch_attention}. The model embeds $m$ sensor trajectories into the token list $E$; $E$ then interacts repeatedly with a learnable latent token list $Z_0$ of fixed size to obtain the latent context $Z_L$. $Z_L$, together with $(X, t)$ after passing through a Gaussian Fourier Projection, goes through an AdaLN Decoder and produces the final velocity field $v_\theta[a](X, t)$. The following subsections describe the exact operations.

\begin{figure}[htbp]
\centering
\includegraphics[width=0.8\textwidth]{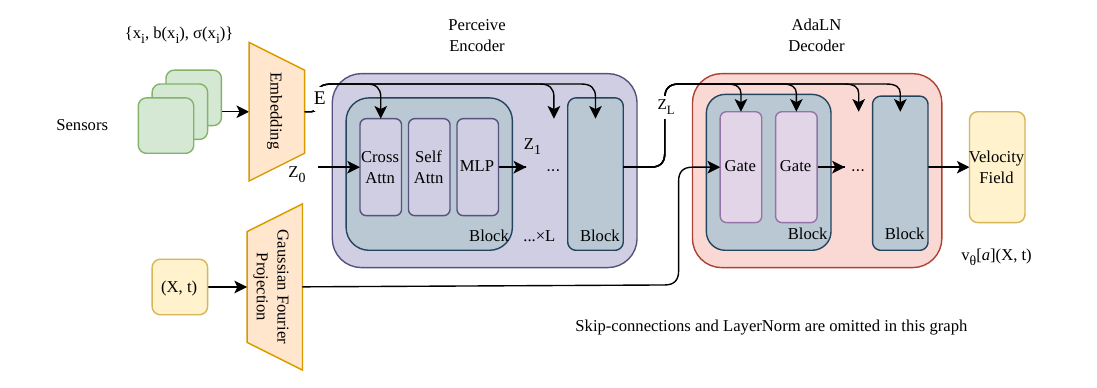}
\caption{Architecture of the amortized neural sampler with an attention encoder.\label{fig:arch_attention}}
\end{figure}

\subsection{Lagrangian trajectory sensors and the per-sensor embedding}

We draw $m$ random initial positions $\{x_i\}_{i=1}^{m}$ i.i.d.\ from a fixed probability distribution on $\mathbb{R}^d$ (the probe initial law); the count $m$ may differ across data samples. From each $x_i$ we simulate a short probe trajectory $x_{i,0},\dots,x_{i,L}$ and record at every step the local feature
\[
y_{i,j}=\bigl(x_{i,j},\;b(x_{i,j}),\;\operatorname{vec}\sigma(x_{i,j})\bigr)\in\mathbb{R}^{\,2d+dd_W},
\qquad j=0,\dots,L,
\]
which pairs each visited state with the drift and diffusion sampled there. The $i$-th \emph{probe observation} is the ordered collection
\[
Y_i=(y_{i,0},\dots,y_{i,L})\in\mathbb{R}^{(L+1)\times c_0},\qquad c_0=2d+dd_W.
\]
Known structure in $\sigma$ reduces the feature width. For $d_W=d$ and isotropic diffusion $\sigma=\sigma_0 I_d$, we use $y_{i,j}=(x_{i,j},\,b(x_{i,j}),\,\sigma_0)\in\mathbb{R}^{\,2d+1}$.

Sensing with trajectories enriches pointwise evaluation by recording local changes of the drift and diffusion along a short path; even short trajectories provide finite difference information about the local vector field, and their ordered structure can be used by a convolutional encoder.

The probe dynamics need not coincide with the target dynamics: any process that visits the domain and permits pointwise evaluation of $b$ and $\sigma$ can drive the probes. In practice we drive them either by a cheap exploratory process---Brownian motion with a fixed step size, or a constant diffusion proxy $dX = b(X)\,dt + \bar\sigma\,dW$---or by the original SDE \eqref{eq:sde} itself, a natural choice when the drift steers probes toward dynamically relevant regions. In either case the sensing cost is decoupled from the difficulty of the target SDE, and the probes can be extremely short. In several of our experiments (Section~\ref{sec:experiments}) trajectories of only $L=20$ steps---and in some settings as few as $L=1$ (two points)---already suffice for accurate reconstruction of the invariant measure. Because each probe involves only $L$ evaluations of $b$ and $\sigma$, the total sensing cost is negligible compared to the $\mathcal{O}(\tau_{\mathrm{mix}}/\Delta t)$ steps required by MCMC.

After the Lagrangian probes obtained, we further make use of a small \textbf{1D convolutional neural network} acting along the step axis to map each observation into a vector of fixed width. Writing $C^{(0)}=Y_i$, the network applies $N$ convolutional blocks
\[
C^{(\ell)}=\mathrm{GELU}\!\Bigl(\mathrm{LayerNorm}\bigl(\mathrm{Conv1d}^{(\ell)}_{k=3}(C^{(\ell-1)})\bigr)\Bigr)\in\mathbb{R}^{(L+1)\times c_\ell},
\qquad \ell=1,\dots,N,
\]
followed by global average pooling over the $L+1$ steps,
\[
e_i=\frac{1}{L+1}\sum_{j=0}^{L}C^{(N)}_{j,:}\in\mathbb{R}^{d_e},\qquad d_e:=c_N.
\]
Here GELU donates the Gaussian Error Linear Unit activation
\[
\mathrm{GELU}(x)=x\Phi(x), \quad \Phi(x)=\frac{1}{\sqrt{2\pi}}\int_{-\infty}^x \exp\left(-\frac{|y|^2}{2}\right)dy.
\]
The convolutions with kernel size $3$ use ``same'' padding, which preserves the step length $L+1$ through all $N$ blocks and keeps the embedding defined down to $L=1$; they capture local temporal patterns such as the curvature and magnitude of the drift, and the pooling produces an output $e_i$ of fixed size for any trajectory length $L$, so the embedding dimension $d_e$ is independent of $L$ and $m$. (A small MLP on a pooled trajectory is an alternative; the 1D CNN consistently outperformed it in our experiments by exploiting the sequential structure.)

The encoder downstream sees the sensors \emph{only} through their empirical measure
\[
\hat\nu_m=\frac1m\sum_{i=1}^{m}\delta_{e_i},\qquad e_i\in\mathbb{R}^{d_e}.
\]
This is the object the resolution invariance theory acts on: increasing $m$ sharpens $\hat\nu_m$ toward an underlying sensor law $\nu$, and the encoder applies the \emph{same} map to $\hat\nu_m$ and to $\nu$.

\subsection{Attention Perceiver encoder}\label{subsec:perceiver_encoder}

The encoder aggregates the sensor embeddings into a context vector of fixed dimension. This aggregation accepts variable sensor counts and is invariant to sensor permutations.

To perform this aggregation, the encoder carries a fixed set of $k$ \textbf{latent tokens} (typically $k\ll m$), stacked as $Z\in\mathbb{R}^{k\times d_z}$ of latent width $d_z$. Attention updates these tokens with sensor information, and the final readout converts them into the context vector. The encoder represents the $m$ sensors by projecting each embedding $e_i\in\mathbb{R}^{d_e}$ to width $d_z$ with a shared linear map $P_e\in\mathbb{R}^{d_e\times d_z}$, stacking the rows $E_i=e_iP_e$ into $E\in\mathbb{R}^{m\times d_z}$.

Two attention operations act on these arrays: cross attention, in which the latents read the sensors, and self attention, in which the latents exchange information among themselves. Each operation uses $H$ heads of width $d_h=d_z/H$.

Both operations are built from the same attention primitive. A single head maps a query array $Q$, key array $K$, and value array $V$ (rows of width $d_h$) to
\[
    \mathrm{Atten}(Q,K,V)=\mathrm{softmax}\!\Bigl(\frac{QK^\top}{\sqrt{d_h}}\Bigr)V,
    \qquad
    \Bigl[\mathrm{softmax}\!\Bigl(\tfrac{QK^\top}{\sqrt{d_h}}\Bigr)\Bigr]_{rj}
    =\frac{\exp(\langle q_r,k_j\rangle/\sqrt{d_h})}{\sum_{i}\exp(\langle q_r,k_i\rangle/\sqrt{d_h})},
\]
where $q_r,k_j$ are the rows of $Q,K$ and the sum runs over the rows of $K$. The softmax normalizes each row, so row $r$ of $\mathrm{Atten}(Q,K,V)$ is the convex combination of the rows of $V$ weighted by query $r$, and the factor $\sqrt{d_h}$ holds the scores at a stable scale as $d_h$ grows.

Each of the $H$ heads carries learnable projections $W^Q_h,W^K_h,W^V_h\in\mathbb{R}^{d_z\times d_h}$, and the $H$ head outputs are concatenated and mixed by $W^O\in\mathbb{R}^{d_z\times d_z}$. Cross attention draws queries from the latents, the keys and values from the sensors; self attention draws all three from the latents:
\[
\begin{aligned}
    \mathrm{CA}(Z,E)&=\bigl[\,\mathrm{Atten}(ZW^Q_h,\,EW^K_h,\,EW^V_h)\,\bigr]_{h=1}^{H}\,W^O,\\
    \mathrm{SA}(Z)&=\bigl[\,\mathrm{Atten}(ZW^Q_h,\,ZW^K_h,\,ZW^V_h)\,\bigr]_{h=1}^{H}\,W^O,
\end{aligned}
\]
both in $\mathbb{R}^{k\times d_z}$, where $[\,\cdot\,]_{h=1}^{H}$ concatenates the head outputs along the feature axis and cross- and self attention carry separate projections. Cross attention carries the information of the $m$ sensors into the $k$ latents; self attention lets the latents share information.


Starting from the learned initial latents $Z^{(0)}\in\mathbb{R}^{k\times d_z}$, each of the $L_{\mathrm{enc}}$ blocks reads the sensors into the latents through cross attention, lets the latents exchange information through self attention, and refines them tokenwise through an MLP with two layers of width $4d_z$ and GELU activation, each step carried by a residual connection and a post-LayerNorm:
\[
\begin{aligned}
Z' &= \mathrm{LN}\bigl(Z^{(\ell)}+\mathrm{CA}(Z^{(\ell)},E)\bigr), \\
Z'' &= \mathrm{LN}\bigl(Z'+\mathrm{SA}(Z')\bigr), \\
Z^{(\ell+1)} &= \mathrm{LN}\bigl(Z''+\mathrm{MLP}(Z'')\bigr),
\end{aligned}
\qquad \ell=0,\dots,L_{\mathrm{enc}}-1 .
\]
Every block reuses the same sensor array $E$ as keys and values and updates the latent tokens. Each cross attention head returns a ratio of two sums over the sensors---the value average in the numerator and the softmax normalization in the denominator---so its value is a function of the empirical measure $\hat\nu_m$ alone. Section~\ref{sec:resoinvar} and Appendix~\ref{app:randomresolution} build on this empirical measure structure, replacing $\hat\nu_m$ by a general sensor law $\nu$ to establish resolution invariance as $m$ grows.

After the $L_{\mathrm{enc}}$ blocks, the encoder reads out the $k$ latent tokens by flattening them and projecting linearly to the context vector
\[
z=\operatorname{Encoder}(\hat\nu_m):=W^{\mathrm{out}}\,\operatorname{vec}\bigl(Z^{(L_{\mathrm{enc}})}\bigr)\in\mathbb{R}^{d_z},
\qquad W^{\mathrm{out}}\in\mathbb{R}^{d_z\times kd_z}
\]
(for $k=1$ this is just the single token). The whole construction is permutation invariant in the ordering of the sensor embeddings~\cite{zaheer_deep_2017}. In our experiments we use $k=4$ latent tokens, $H=4$ heads of width $d_h=32$ (so $d_z=128$), and $L_{\mathrm{enc}}=2$ blocks, yielding $z\in\mathbb{R}^{128}$.

\subsection{AdaLN-conditioned flow decoder}

The decoder receives the context $z\in\mathbb{R}^{d_z}$, a state $x\in\mathbb{R}^d$, and the flow time $t\in[0,1]$, and outputs the velocity $v_\theta[b,\sigma](x,t)\in\mathbb{R}^d$.

The flow time $t$ enters through a fixed sinusoidal embedding $\tau=\mathrm{SinEmb}(t)\in\mathbb{R}^{d_t}$, whose $j$-th entry is
\[
\tau_{2j-1}=\sin(2\pi f_j\,t),\qquad \tau_{2j}=\cos(2\pi f_j\,t),\qquad f_j=f_{\min}\!\left(\tfrac{f_{\max}}{f_{\min}}\right)^{\!(j-1)/(d_t/2-1)},
\]
with geometrically spaced frequencies $f_1=f_{\min}$ to $f_{d_t/2}=f_{\max}$ and $j=1,\dots,d_t/2$. The context that modulates every block fuses the operator latent $z$ with this time embedding through a learned linear projection,
\[
c = W_c\,[z;\,\tau]+b_c\in\mathbb{R}^{d_z},
\]
and the state and time embedding are concatenated and lifted to the hidden width $D$,
\[
h^{(0)}=W_{\mathrm{in}}\,[x;\,\tau]+b_{\mathrm{in}}\in\mathbb{R}^{D}.
\]

We condition the decoder through \textbf{adaptive layer normalization} (AdaLN)~\cite{peebles_scalable_2023}, letting the context $c$ set the scale and shift of every block's normalization so that the conditioning signal reaches all depths. Each of the $N_d$ blocks modulates its two normalization layers with scale and shift predicted from $c$,
\[
\mathrm{AdaLN}(h;c)=\bigl(1+\gamma(c)\bigr)\odot\mathrm{LN}(h)+\beta(c),
\]
and applies two such modulations with a residual connection,
\[
\begin{aligned}
h' &= \mathrm{act}\!\bigl(\mathrm{AdaLN}(h^{(\ell)};\,c)\bigr), \\
h'' &= \mathrm{act}\!\bigl(\mathrm{AdaLN}(W_1 h';\,c)\bigr), \\
h^{(\ell+1)} &= h^{(\ell)} + \alpha(c)\odot W_2 h'',
\end{aligned}
\qquad \ell=0,\dots,N_d-1,
\]
where $\gamma,\beta,\alpha$ are linear functions of $c$ and $\mathrm{act}$ is Mish.

Finally, after $N_d$ blocks the hidden state is normalized and projected to the velocity,
\[
v_\theta[b,\sigma](x,t)=W_v\,\mathrm{LN}(h^{(N_d)})\in\mathbb{R}^d.
\]
The output weight $W_v$ is initialized to zero so that the velocity field starts near zero at the beginning of training. For interacting particle systems with state $X=(x_1,\ldots,x_N)\in(\mathbb{R}^{d_p})^N$, we use the particle variant described in Section~\ref{subsec:particles}: the decoder reshapes $X$ into $N$ particle tokens, embeds each token with shared weights, applies AdaLN-conditioned self attention and feedforward blocks across the particle tokens, and projects each token to its velocity $v_i$. Because the token maps are shared and self attention is permutation equivariant, relabeling the particles relabels the output velocities in the same way.


\section{Expressivity of the Lagrangian attention sampler}\label{sec:theoryexpress}

This section is devoted to the expressivity result.
The sampler is intended to learn a map from SDE coefficients to invariant measures.
The theoretical question is whether our architecture has sufficient expressive power to approximate this map uniformly over a compact family of coefficient functions.
The reduction theorem establishes this result under the stated assumptions via neural operator approximation of a deterministic coefficient-dependent velocity field on compact domains.
Appendix~\ref{app:proofs} contains the proofs and construction details.
The main text records the assumptions, the key lemmas and propositions, the
theorem statements, and the proof ideas needed to interpret them.

For \(a=(b,\sigma)\) with \(\sigma:\mathbb R^d\to\mathbb R^{d\times d_W}\), write
\[
    A_a(x)=\sigma(x)\sigma(x)^\top\in\mathbb R^{d\times d}
\]
for the diffusion matrix. The generator of the SDE acts on smooth test functions by
\begin{gather}
\mathcal L_a\varphi(x)
= b(x)\cdot\nabla\varphi(x) +\frac12\operatorname{Tr}\!\bigl(A_a(x)D^2\varphi(x)\bigr).
\end{gather}
The generator depends on \(\sigma\) only through \(A_a\). As a result, the invariant measure \(\mu_a\) also depends on \(\sigma\) only through \(A_a\). We keep the pair \(a=(b,\sigma)\) as the variable because the sampler receives \(b\) and \(\sigma\) as its input.

The coefficient space is
\begin{gather}
\mathcal X_a
    = C(\mathbb R^d;\mathbb R^d)
    \times C(\mathbb R^d;\mathbb R^{d\times d_W}).
\end{gather}
We equip this space with the compact-open topology, that is, uniform convergence
on compact subsets. A sequence \(a_n=(b_n,\sigma_n)\) converges to
\(a=(b,\sigma)\) if, on every compact set \(B\subset\mathbb R^d\),
\[
    \sup_{x\in B}|b_n(x)-b(x)|
    +
    \sup_{x\in B}\|\sigma_n(x)-\sigma(x)\|
    \longrightarrow 0 .
\]
Choose any compact exhaustion \((B_j)_{j\ge1}\) of \(\mathbb R^d\), and define
\[
    p_j(a,a')
    =
    \sup_{x\in B_j}|b(x)-b'(x)|
    +
    \sup_{x\in B_j}\|\sigma(x)-\sigma'(x)\|.
\]
The metric
\[
    d_{\mathcal X_a}(a,a')
    =
    \sum_{j=1}^\infty 2^{-j}\min\{1,p_j(a,a')\}
\]
metrizes the compact-open topology on \(\mathcal X_a\). If \(a_n\to a\) in this topology, then \(A_{a_n}\to A_a\) uniformly on every compact set, because \(\sigma_n\) is uniformly bounded on that set.

In this topology, compactness of a set \(E\subset\mathcal X_a\) means that
every sequence in \(E\) has a subsequence converging uniformly on every compact
subset of \(\mathbb R^d\). The Arzela--Ascoli criterion gives the following
standard compactness condition.

\begin{proposition}[Compactness criterion]\label{prop:compactness_criterion}
A closed set \(E\subset\mathcal X_a\) is compact if, for each ball
\(B_m=\{x:|x|\le m\}\), the restrictions of its elements to \(B_m\) are
uniformly bounded and equicontinuous.
\end{proposition}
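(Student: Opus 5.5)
The plan is to show that $E$ is sequentially compact. Since $d_{\mathcal X_a}$ metrizes the compact-open topology, sequential compactness is equivalent to compactness. The argument is a standard diagonal use of Arzel\`a--Ascoli.

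Fix a sequence $a_n=(b_n,\sigma_n)\in E$ and work on the balls $B_m=\{x:|x|\le m\}$, $m=1,2,\dots$.

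\begin{enumerate}[(i)]
\item On $B_1$, the restrictions of the $b_n$ (with values in $\mathbb R^d$) and of the $\sigma_n$ (with values in $\mathbb R^{d\times d_W}$, a finite-dimensional normed space) are uniformly bounded and equicontinuous by hypothesis. The classical Arzel\`a--Ascoli theorem on the compact metric space $B_1$ then gives a subsequence along which both $b_n$ and $\sigma_n$ converge uniformly on $B_1$. For the pair, we first extract a subsequence for $b$ and then a further one for $\sigma$.
\item Inductively, from the subsequence obtained on $B_m$ we extract a further subsequence converging uniformly on $B_{m+1}$.
\item We take the diagonal sequence $a_{n_k}$, the $k$-th element of the $k$-th subsequence. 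For each fixed $m$, its tail is a subsequence of the $m$-th one, so it converges uniformly on every $B_m$. Every compact $B\subset\mathbb R^d$ lies in some $B_m$, so the convergence is uniform on all compact sets.
\end{enumerate}

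Next we identify the limit. The limits on different balls agree on overlaps, since uniform limits are unique pointwise. They therefore define global functions $b:\mathbb R^d\to\mathbb R^d$ and $\sigma:\mathbb R^d\to\mathbb R^{d\times d_W}$. These are continuous, because on each $B_m$ they are uniform limits of continuous functions. Hence $a=(b,\sigma)\in\mathcal X_a$.

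It remains to check that $a_{n_k}\to a$ in the metric $d_{\mathcal X_a}$. Given $\varepsilon>0$, choose $J$ with $\sum_{j>J}2^{-j}<\varepsilon/2$. The finitely many seminorms $p_1,\dots,p_J$ tend to zero, because each $B_j$ is compact. Hence $d_{\mathcal X_a}(a_{n_k},a)<\varepsilon$ for large $k$.

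Since $E$ is closed, $a\in E$. Thus every sequence in $E$ has a subsequence converging in $E$, so $E$ is compact.

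The only mildly delicate step is the bookkeeping of the nested extraction. We must ensure that a single subsequence works simultaneously for all balls and for both components $b$ and $\sigma$. The diagonal argument in step (iii) handles this. Everything else is routine.
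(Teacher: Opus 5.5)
Your proof is correct and follows the same Arzel\`a--Ascoli route as the paper. The paper states the criterion as a standard consequence of Arzel\`a--Ascoli without details; your diagonal extraction over the balls $B_m$, your identification of the continuous limit, and your passage from uniform convergence on compacts to convergence in $d_{\mathcal X_a}$ supply exactly those details. One small point could be made explicit: the exhaustion sets $B_j$ in the metric need not be the balls $B_m$, but each is compact and therefore contained in some $B_m$, which is what your remark that ``each $B_j$ is compact'' relies on.
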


\subsection{Invariant measure and the properties}

We will focus on the following set of coefficients with parameters $\nu, p_0, \lambda, M$.
Let \(\mathcal A(\nu,p_0,\lambda,M)\) be the class of coefficients
\(a=(b,\sigma)\in\mathcal X_a\) such that:
\begin{enumerate}
    \item \textbf{Uniform ellipticity} (which requires $d_W\ge d$).
    \[
        A_a(x)\succeq \nu I,
        \qquad x\in\mathbb R^d .
    \]
    \item \textbf{Khasminskii condition for polynomial Lyapunov functions.}
    There exists \(p_0>2\) such that
    \[
        \langle b(x),x\rangle
        +\frac12(p_0-1)\operatorname{Tr}A_a(x)
        \le -\lambda |x|^2+M,
        \qquad x\in\mathbb R^d .
    \]
    In particular, for \(V(x)=|x|^p\), \(p\le p_0\), this condition yields
    \[
        \mathcal L_a V(x)\le -\lambda p V(x)+Mp
    \]
    up to lower order constants absorbed into \(M\).
\end{enumerate}

The following is a standard result of the preceding conditions.
\begin{proposition}[Stationary measure and moments]\label{prop:stationary_fpk}
If \(a\in\mathcal A(\nu,p_0,\lambda,M)\), then the stationary Fokker--Planck equation
\(\mathcal L_a^*\mu=0\) has a unique probability solution \(\mu_a\). Moreover, for every \(p\le p_0\), there is a
constant \(M_p<\infty\), depending only on
\((\nu,p_0,\lambda,M,p)\), such that
\[
\sup_{a\in \mathcal{A}(\nu,p_0,\lambda,M)}\int |x|^p\,\mu_a(dx)\le M_p,
    \qquad 2\le p\le p_0.
\]
\end{proposition}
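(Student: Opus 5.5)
We prove existence, the uniform moment bound, and uniqueness in that order, using Lyapunov functions \(V_p(x)=(1+|x|^2)^{p/2}\) for \(2\le p\le p_0\).

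\emph{Lyapunov inequality.} First we compute \(\mathcal L_aV_p\) explicitly. With \(r^2=1+|x|^2\), we have
\[
\nabla V_p=p\,r^{p-2}x,\qquad D^2V_p=p\,r^{p-2}I+p(p-2)\,r^{p-4}xx^\top,
\]
so that
\[
\mathcal L_aV_p=p\,r^{p-2}\Bigl(\langle b,x\rangle+\tfrac12\operatorname{Tr}A_a+\tfrac{p-2}{2}\,r^{-2}\langle A_ax,x\rangle\Bigr).
\]
Since \(\langle A_ax,x\rangle\le \operatorname{Tr}A_a\,|x|^2\le \operatorname{Tr}A_a\,r^2\), the bracket is at most \(\langle b,x\rangle+\frac{p-1}{2}\operatorname{Tr}A_a\). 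This is bounded by \(\langle b,x\rangle+\frac{p_0-1}{2}\operatorname{Tr}A_a\le -\lambda|x|^2+M\), which is where \(\operatorname{Tr}A_a\ge0\) and \(p\le p_0\) are used. Hence
\[
\mathcal L_aV_p\le -\lambda p\,V_p+(\lambda+M)p\,r^{p-2}\le -\tfrac{\lambda p}{2}V_p+C_p .
\]
The last step is Young's inequality, and \(C_p\) depends only on \((\lambda,M,p)\). No constant here depends on the particular \(a\), and \(\nu\) is used only for existence and uniqueness.

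\emph{Existence and uniform moments.} Continuity of \(b,\sigma\) together with the Lyapunov function \(V_2\) (with \(\mathcal L_aV_2\to-\infty\)) gives existence of a probability solution of \(\mathcal L_a^*\mu=0\). This follows from the Bogachev--Röckner--Shaposhnikov existence theorem \cite{bogachev2022fokker}; alternatively, the SDE is nonexplosive, and Krylov--Bogolyubov applied to time averages of the transition law, which are tight by the \(V_2\) bound, produces an invariant measure. For the moment bound, we want to integrate the Lyapunov inequality against \(\mu\) and use \(\int\mathcal L_aV_p\,d\mu=0\). Since \(V_p\) is unbounded, we justify this by a cutoff: take \(\zeta_R(V_p)\), where \(\zeta_R\) is concave, smooth, equal to the identity on \([0,R]\) and constant beyond \(2R\). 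Then \(\mathcal L_a\zeta_R(V_p)=\zeta_R'\mathcal L_aV_p+\tfrac12\zeta_R''|\sigma^\top\nabla V_p|^2\le \zeta_R'(V_p)\bigl(-\tfrac{\lambda p}2V_p+C_p\bigr)\). Integrating against \(\mu\) gives zero on the left. Fatou's lemma as \(R\to\infty\) then yields \(\int V_p\,d\mu\le 2C_p/(\lambda p)=:M_p\), which is uniform in \(a\). (Equivalently, one can use the dynamic argument with Itô's formula, a stopping time, and Gronwall, namely \(\mathbb E V_p(X_t)\le e^{-\lambda pt/2}V_p(x)+2C_p/(\lambda p)\), and then average over the stationary law.)

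\emph{Uniqueness.} Uniform ellipticity \(A_a\succeq\nu I\) together with continuity of the coefficients implies that any probability solution has a positive continuous density (Bogachev--Krylov--Röckner regularity and Harnack). Uniqueness then follows from the standard result that, under local nondegeneracy and the existence of a Lyapunov function, the stationary FPK equation has at most one probability solution \cite{bogachev2022fokker}. Probabilistically, the process is strong Feller and irreducible, and Doob--Khasminskii gives uniqueness.

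\emph{Main obstacle.} The delicate point is that \(\sigma\) is only continuous, so the SDE need not have pathwise unique solutions. For this reason I would argue purely at the PDE level: existence, positivity and uniqueness of probability solutions via \cite{bogachev2022fokker}, and the moment bound via the cutoff computation above, which requires only \(\mathcal L_a\)-integrability against \(\mu\). That integrability holds because \(\zeta_R(V_p)\) has compactly supported derivatives and \(b,\sigma\) are locally bounded.
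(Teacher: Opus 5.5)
Your proof is correct and follows the same route as the paper. Existence and uniqueness are taken from the standard Bogachev--Krylov--R\"ockner--Shaposhnikov results, and the uniform moment bound comes from the Lyapunov inequality $\mathcal L_a V_p\le -c_pV_p+C_p$ (using $\operatorname{Tr}A_a\ge0$ and $p\le p_0$), a concave cutoff, integration against the stationary measure, and a limit in the cutoff parameter. Your smooth $V_p=(1+|x|^2)^{p/2}$ and your cutoff $\zeta_R$ that is constant beyond $2R$ (so $\zeta_R(V_p)$ is a constant plus a $C_c^\infty$ test function and stationarity applies directly) are slightly cleaner than the paper's $1+|x|^p$ with separate smoothing at the origin and a merely bounded concave $\theta_n$, but the argument is the same.
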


Classical sufficient conditions for existence and uniqueness of stationary
solutions to the Fokker--Planck equation are given in
\cite[Theorems~2.4.1 and~4.1.6]{bogachev2022fokker}. The moment estimate
follows from the standard Lyapunov cutoff argument; the details are given in
Appendix~\ref{app:proofs}.

\begin{remark}
Local Lipschitz conditions on \(a=(b,\sigma)\) are often imposed for strong
well-posedness of the SDE. The present theory is stated at the level of laws
and stationary Fokker--Planck equations, so one may instead work with weak solutions and
the associated martingale problem. In this setting, it does not require the conditions on the derivatives of $a$. See also  the Stroock-Varadhan uniqueness theorem \cite[Theorem 24.1, Chapter V]{rogers2000diffusions}.
\end{remark}

\begin{lemma}[Closedness and stability of admissible coefficients]\label{lemma:inv_stability}
The set \(\mathcal A(\nu,p_0,\lambda,M)\) is closed under the compact-open
topology. That is, if \(a_n\in\mathcal A(\nu,p_0,\lambda,M)\) and
\(a_n\to a\) locally uniformly, then \(a\in\mathcal A(\nu,p_0,\lambda,M)\).
Moreover,
\[W_2(\mu_{a_n},\mu_a)\to0 .\]
\end{lemma}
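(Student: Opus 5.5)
Closedness is a pointwise limit argument. Both defining conditions are pointwise inequalities in $x$ involving only $b(x)$ and $A_a(x)$, and these are continuous functions of $(b(x),\sigma(x))$. If $a_n\to a$ locally uniformly, then $b_n(x)\to b(x)$ and $\sigma_n(x)\to\sigma(x)$ at every $x$, so $A_{a_n}(x)\to A_a(x)$. The uniform ellipticity bound $\xi^\top A_{a_n}(x)\xi\ge \nu|\xi|^2$ passes to the limit for each fixed $\xi$. Likewise the Khasminskii inequality $\langle b_n(x),x\rangle+\tfrac12(p_0-1)\operatorname{Tr}A_{a_n}(x)\le -\lambda|x|^2+M$ passes to the limit for each fixed $x$, with the same constants. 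Hence $a\in\mathcal A(\nu,p_0,\lambda,M)$, and Proposition~\ref{prop:stationary_fpk} applies to $a$ as well as to each $a_n$.

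For the stability statement I would use compactness plus uniqueness of the limit. By Proposition~\ref{prop:stationary_fpk}, $\sup_n\int|x|^{p_0}\,d\mu_{a_n}\le M_{p_0}$ with $p_0>2$. This uniform bound does two things:
\begin{itemize}
\item it makes $\{\mu_{a_n}\}$ tight, so by Prokhorov every subsequence has a further subsequence converging weakly to some probability measure $\mu^*$;
\item it makes $|x|^2$ uniformly integrable, since $\int_{|x|>R}|x|^2\,d\mu_{a_n}\le R^{2-p_0}M_{p_0}$.
\end{itemize}
Weak convergence together with uniform integrability of $|x|^2$ gives convergence of second moments, and therefore $W_2(\mu_{a_{n_k}},\mu^*)\to 0$ along that subsequence, using the characterization of $W_2$ convergence recalled in Section~\ref{sec:setup}.

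It remains to identify $\mu^*=\mu_a$. For every $\varphi\in C_c^\infty(\mathbb R^d)$, stationarity gives $\int \mathcal L_{a_n}\varphi\,d\mu_{a_n}=0$. Let $B$ be a compact set containing $\operatorname{supp}\varphi$. On $B$ we have $\mathcal L_{a_n}\varphi\to\mathcal L_a\varphi$ uniformly, because $b_n\to b$ and $A_{a_n}\to A_a$ uniformly on compacts. All these functions vanish off $B$, so
\[
\Bigl|\int \mathcal L_{a_n}\varphi\,d\mu_{a_n}-\int\mathcal L_a\varphi\,d\mu_{a_n}\Bigr|\le \sup_{B}|\mathcal L_{a_n}\varphi-\mathcal L_a\varphi|\to0 .
\]
The function $\mathcal L_a\varphi$ is bounded and continuous with compact support, so weak convergence gives $\int\mathcal L_a\varphi\,d\mu_{a_{n_k}}\to\int\mathcal L_a\varphi\,d\mu^*$. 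Hence $\int\mathcal L_a\varphi\,d\mu^*=0$ for all test functions, that is, $\mathcal L_a^*\mu^*=0$ in the sense of distributions. By the uniqueness part of Proposition~\ref{prop:stationary_fpk}, which applies because $a$ is admissible by the first step, $\mu^*=\mu_a$. Since every subsequence has a further subsequence converging in $W_2$ to the same limit $\mu_a$, the whole sequence satisfies $W_2(\mu_{a_n},\mu_a)\to0$.

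The main obstacle is the identification step. The uniqueness in Proposition~\ref{prop:stationary_fpk} must be uniqueness among \emph{all} probability solutions of the stationary equation in the distributional sense, with merely continuous coefficients. This is the class in which $\mu^*$ is produced, and no regularity of $\mu^*$ is available to place it in a smaller class. I would check that the cited Bogachev--Krylov--R\"ockner--Shaposhnikov results (Theorems~2.4.1 and~4.1.6) indeed give uniqueness of probability solutions under continuous, uniformly nondegenerate $A$ and a Lyapunov function. If the available uniqueness theorem needs more, such as local Lipschitz or Dini continuity of $A$, I would fall back on well-posedness of the martingale problem (Stroock--Varadhan, continuous nondegenerate $A$) together with ergodicity to get uniqueness of the invariant law. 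The remaining steps, closedness and tightness, are routine.
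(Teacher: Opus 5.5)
Your proposal is correct and follows essentially the same argument as the paper: closedness by passing to the limit pointwise, tightness and uniform integrability of $|x|^2$ from the uniform $p_0$-moment bound, identification of every subsequential limit through $\int\mathcal L_{a_n}\varphi\,d\mu_{a_n}=0$ for $\varphi\in C_c^\infty$, and then uniqueness from Proposition~\ref{prop:stationary_fpk}. The only cosmetic difference is that the paper first extends the limiting identity to $C_b^2$ test functions by density before invoking uniqueness, whereas you apply uniqueness of distributional probability solutions directly; the regularity subtlety you flag about that uniqueness class is one the paper also leaves entirely to the cited results.
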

The proof of Lemma \ref{lemma:inv_stability} will also be presented in Appendix \ref{app:proofs}.

\subsection{An explicit construction of approximating velocity field}

We will basically use the smoothed flow matching velocity field to construct an approximating velocity field.
First let \(P_S:\mathbb R^d\to\overline B_S\) be the radial projection and define
\[\mu_a^S=(P_S)_\#\mu_a .\]
The polynomial moment bound gives the projection error
\[
\sup_{a\in \mathcal{A}(\nu,p_0,\lambda,M)}W_2(\mu_a^S,\mu_a)
\le M_{p_0}^{1/2}S^{-(p_0-2)/2}.
\]
Moreover, \(a\mapsto\mu_a^S\) is continuous.

Next, take \(\alpha\in(0,1)\) and the narrower Gaussian
\[\gamma_\alpha=\mathcal N(0,(1-\alpha)I_d).\]
Fix the target measure \(\mu_a^S\) with \(a\in\mathcal{A}(\nu,p_0,\lambda,M)\). Draw
\[X\sim\gamma_\alpha,\qquad Y\sim\mu_a^S,\qquad\xi\sim\mathcal N(0,I_d),\]
independently. Define
\[M_t^a=(1-t)X+tY,\qquad Z_t^{a}=M_t^a+\sqrt\alpha\,\xi .\]
Clearly, letting \(\rho_t^{a}=\operatorname{Law}(Z_t^{a})\), then
\[\rho_0^{a}=\gamma, \qquad \rho_1^{a}=\mu_a^S*\mathcal N(0,\alpha I_d).\]
Coupling \(Y\) with \(Y+\sqrt\alpha\,\xi\) gives
\[
W_2^2(\mu_a^S*\mathcal N(0,\alpha I_d),\mu_a^S)
\le
\mathbb E|\sqrt\alpha\,\xi|^2
=\alpha d \quad \Longrightarrow \quad  W_2(\rho_1^{a},\mu_a^S)\le \sqrt{\alpha d}.
\]
Let
\[\phi_\alpha(u)=(2\pi\alpha)^{-d/2}\exp(-|u|^2/(2\alpha)).\]
Then, the smoothed density is
\[
\rho_t^{a}(z) =\int \phi_\alpha(z-((1-t)x+ty))\,
\gamma_\alpha(dx)\,\mu_a^S(dy)>0,\quad \forall z,\ t\in [0,1].
\]
The current is
\[J_t^{a}(z)=\int (y-x)\phi_\alpha(z-((1-t)x+ty))\,\gamma_\alpha(dx)\,\mu_a^S(dy).\]
The velocity is
\[v^a(z,t)=\frac{J_t^{a}(z)}{\rho_t^{a}(z)}=\mathbb E[Y-X\mid Z_t^{a}=z].\]
\begin{lemma}[Smoothed flow matching velocity]\label{lem:smoothed_velocity}
For fixed \(\alpha\in(0,1)\) and \(S<\infty\), the velocity \(v^a\) has
uniform path moment bounds: for every \(p\ge2\),
\[
    \sup_{a\in \mathcal{A}(\nu,p_0,\lambda,M)}\sup_{t\in[0,1]}
    \int |v^a(z,t)|^p\,d\rho_t^{a}(z)
    \le C_{\alpha,S,p}.
\]
It also has the uniform linear growth bound
\[|v^a(z,t)|\le C_{\alpha,S}(1+|z|).\]
Moreover, \(\rho_t^{a}\) satisfies the continuity equation with this velocity
field,
\begin{gather}
    \partial_t \rho_t^{a} + \nabla\cdot(\rho_t^{a}v^a)=0.
\end{gather}
\end{lemma}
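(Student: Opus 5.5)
The plan is to treat each claim of Lemma~\ref{lem:smoothed_velocity} directly from the conditional expectation representation $v^a(z,t)=\mathbb E[Y-X\mid Z_t^a=z]$. The key structural fact is that $Y$ takes values in $\overline B_S$ regardless of $a$, while $X\sim\gamma_\alpha$ and $\xi$ are Gaussians that do not depend on $a$. Every bound will therefore be expressed through $S$, $\alpha$, $d$ and $p$ only, which makes uniformity over $\mathcal A(\nu,p_0,\lambda,M)$ automatic. Notice that Proposition~\ref{prop:stationary_fpk} is not even needed here, since the projection $P_S$ already removes the tails.

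\textbf{Moment bound.} Apply conditional Jensen:
\[
\int|v^a(z,t)|^p\,d\rho_t^a(z)=\mathbb E\bigl|\mathbb E[Y-X\mid Z_t^a]\bigr|^p\le \mathbb E|Y-X|^p\le 2^{p-1}\bigl(S^p+\mathbb E|X|^p\bigr),
\]
and the right-hand side is finite and depends only on $(\alpha,S,p,d)$.

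\textbf{Linear growth.} Write $Y-X$ in terms of $Z_t^a$ and the remaining variables. Since $Z_t^a=(1-t)X+tY+\sqrt\alpha\,\xi$, we have
\[
X=Z_t^a-t(Y-X)-\sqrt\alpha\,\xi \quad\text{and}\quad Y-X=\frac{Y-Z_t^a+\sqrt\alpha\,\xi}{1-t}.
\]
This representation degenerates as $t\to1$, so a cleaner route is to condition on $Y$ first. Given $Y=y$, the pair $(X,Z_t^a)$ is jointly Gaussian: $X\sim\mathcal N(0,(1-\alpha)I)$ and $Z_t^a=(1-t)X+ty+\sqrt\alpha\,\xi$, with conditional variance of $Z$ equal to $s_t:=(1-t)^2(1-\alpha)+\alpha\ge \alpha(1-\alpha)>0$. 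Gaussian regression then gives
\[
\mathbb E[X\mid Y=y,Z_t^a=z]=\frac{(1-t)(1-\alpha)}{s_t}(z-ty),
\]
whose coefficient is bounded by $(1-\alpha)/(\alpha(1-\alpha))\cdot$const, uniformly in $t$. Hence
\[
\bigl|\mathbb E[Y-X\mid Y,Z_t^a=z]\bigr|\le S+C_\alpha(|z|+S).
\]
Averaging over the conditional law of $Y$ given $Z_t^a=z$ (tower property, with $|Y|\le S$ almost surely) yields $|v^a(z,t)|\le C_{\alpha,S}(1+|z|)$ pointwise. To be safe, this holds for every $z$ and not merely almost every $z$, because the explicit ratio $J_t^a/\rho_t^a$ is a continuous version and the formula above can be checked directly on it via Fubini.

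\textbf{Continuity equation.} Verify it in weak form. For $\varphi\in C_c^\infty$,
\[
\frac{d}{dt}\int\varphi\,d\rho_t^a=\frac{d}{dt}\mathbb E\varphi(Z_t^a)=\mathbb E\bigl[\nabla\varphi(Z_t^a)\cdot(Y-X)\bigr]=\mathbb E\bigl[\nabla\varphi(Z_t^a)\cdot v^a(Z_t^a,t)\bigr],
\]
where differentiation under the expectation is justified by dominated convergence, since $\mathbb E|Y-X|<\infty$ and $\nabla\varphi$ is bounded; the last equality is the tower property. This is exactly the weak form of $\partial_t\rho_t^a+\nabla\cdot(\rho_t^a v^a)=0$. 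Alternatively, one can differentiate the explicit integrals, using the identity $\partial_t\phi_\alpha(z-m_t)=-\nabla_z\phi_\alpha(z-m_t)\cdot(y-x)$ and integrating against $\gamma_\alpha\otimes\mu_a^S$. The two approaches agree because $J_t^a$ is $\rho_t^a v^a$ by definition.

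\textbf{Main obstacle.} The delicate step is the pointwise linear growth bound, which must hold uniformly over $t\in[0,1]$ including near $t=1$. The smoothing parameter $\alpha$ is what makes this work: it keeps the conditional variance $s_t$ bounded below by $\alpha(1-\alpha)$, so the Gaussian regression coefficient never blows up. I would do this computation carefully and confirm that the constant depends on $\alpha$ like $1/\alpha$, which is acceptable since $\alpha$ is fixed.
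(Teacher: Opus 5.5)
Your proposal is correct and follows essentially the same route as the paper: conditional Jensen on $D=Y-X$ for the moment bound, and Gaussian regression of $X$ given $(Y,Z_t^a)$ with mean $\frac{(1-t)(1-\alpha)}{(1-t)^2(1-\alpha)+\alpha}(z-ty)$, combined with $|Y|\le S$, for the linear growth. For the continuity equation the paper integrates the kernel identity $\partial_t\phi_\alpha+\nabla_z\cdot((y-x)\phi_\alpha)=0$ against $\gamma_\alpha\otimes\mu_a^S$, which is your alternative; your weak-form tower-property derivation is an equivalent variant. Your remark that only $|Y|\le S$ is needed for uniformity, and not Proposition~\ref{prop:stationary_fpk}, is accurate; also $s_t\ge\alpha$ holds directly, slightly sharper than your bound $\alpha(1-\alpha)$.
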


Finally, we take a smooth cutoff \(\chi_R\in C_c^\infty(\mathbb R^d)\) such that
\[
\chi_R(x)=1 \text{ for } |x|\le R,
\qquad \chi_R(x)=0 \text{ for } |x|\ge 2R, \qquad 0\le \chi_R\le1 .
\]
For a fixed \(a\in\mathcal{A}(\nu,p_0,\lambda,M)\), define
\[v^{a,R}(x,t)=\chi_R(x)v^a(x,t).\]
\begin{lemma}[Lipschitz continuity of the target velocity]\label{lem:compact_target_continuity}
For every fixed \(\alpha\in(0,1)\), \(S<\infty\), and \(R>0\),
\(a\mapsto v^{a,R}\) is continuous as a map from \(\mathcal{A}(\nu,p_0,\lambda,M)\) to
\(C(\overline B_{2R}\times[0,1];\mathbb R^d)\). Moreover,
\[
L_{\alpha,S,R}
=\sup_{a\in \mathcal{A}(\nu,p_0,\lambda,M)}\sup_{t\in[0,1]}
\operatorname{Lip}_x v^{a,R}(\cdot,t)<\infty .
\]
\end{lemma}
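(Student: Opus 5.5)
The plan is to write the smoothed velocity as an explicit ratio of integrals against \(\mu_a^S\), exploit the fact that \(\mu_a^S\) lives on the fixed ball \(\overline B_S\), and obtain both claims from uniform bounds on this ratio and its gradient over \(\overline B_{2R}\times[0,1]\). Integrating out \(X\sim\gamma_\alpha\) conditionally on \(Y=y\) shows that \(Z_t^a\mid Y=y\) is Gaussian with mean \(ty\) and covariance
\[
s_t I_d,\qquad s_t=(1-t)^2(1-\alpha)+\alpha\ \ge\ \alpha(1-\alpha)>0 .
\]
The conditional mean of \(Y-X\) given \(Y=y\) and \(Z_t^a=z\) is affine: using \(\mathbb E[X\mid Y=y,Z_t^a=z]=\tfrac{(1-t)(1-\alpha)}{s_t}(z-ty)\), we get
\[
\rho_t^a(z)=\int g_t(z-ty)\,\mu_a^S(dy),\qquad
J_t^a(z)=\int \Bigl(y-\tfrac{(1-t)(1-\alpha)}{s_t}(z-ty)\Bigr)g_t(z-ty)\,\mu_a^S(dy),
\]
where \(g_t\) is the \(\mathcal N(0,s_tI_d)\) density. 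Because \(s_t\) is bounded above and below uniformly in \(t\), and \(|y|\le S\) on the support, the kernel \(g_t(z-ty)\) is smooth in \((z,t)\). On \(\overline B_{2R}\times[0,1]\times\overline B_S\) the kernel is bounded below by \(c_{\alpha,S,R}>0\), and it is bounded above with all \(z\)-derivatives bounded by constants depending only on \((\alpha,S,R,d)\).

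For the Lipschitz bound, I differentiate \(v^a=J_t^a/\rho_t^a\) in \(z\) under the integral. The gradients of \(\rho_t^a\) and \(J_t^a\) are bounded by constants depending only on \((\alpha,S,R)\), since every integrand and its \(z\)-derivative is uniformly bounded and \(\mu_a^S\) is a probability measure. The lower bound \(\rho_t^a\ge c_{\alpha,S,R}\) then gives \(\sup_{|z|\le 2R,t}|\nabla_z v^a|\le C\), with \(C\) independent of \(a\). Since \(\chi_R\) is smooth, \(|v^a|\) is bounded on \(\overline B_{2R}\) (for instance by \(C_{\alpha,S}(1+2R)\) from Lemma~\ref{lem:smoothed_velocity}), and \(v^{a,R}\) vanishes outside \(B_{2R}\), the product rule yields \(L_{\alpha,S,R}<\infty\) globally in \(x\). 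This estimate uses no property of the class beyond the support bound \(|y|\le S\), so the supremum over \(a\) is harmless.

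For continuity, I take \(a_n\to a\) in the compact-open topology. By Lemma~\ref{lemma:inv_stability}, \(\mu_{a_n}\to\mu_a\) in \(W_2\), hence weakly. Since \(P_S\) is continuous, the continuous mapping theorem gives \(\mu_{a_n}^S\rightharpoonup\mu_a^S\), which is the continuity of \(a\mapsto\mu_a^S\) already noted. The integrands of \(\rho_t^a\) and \(J_t^a\), viewed as functions of \(y\in\overline B_S\), form a family indexed by \((z,t)\in\overline B_{2R}\times[0,1]\) that is uniformly bounded and equi-Lipschitz in \(y\). Weak convergence of measures supported in the fixed compact \(\overline B_S\) is equivalent to convergence in bounded-Lipschitz distance, so \(\rho_t^{a_n}\to\rho_t^a\) and \(J_t^{a_n}\to J_t^a\) uniformly in \((z,t)\). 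Combined with the uniform lower bound on the denominators, this gives uniform convergence of the ratios, and hence of \(v^{a_n,R}\) to \(v^{a,R}\) in \(C(\overline B_{2R}\times[0,1];\mathbb R^d)\). Because \(\mathcal X_a\) is metrizable, sequential continuity is enough.

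The main obstacle is this uniform-in-\((z,t)\) passage to the limit. I handle it with the equi-Lipschitz family and the dual (Kantorovich–Rubinstein) form of \(W_1\) on \(\overline B_S\), noting that \(W_1(\mu^S_{a_n},\mu^S_a)\le W_1(\mu_{a_n},\mu_a)\le W_2(\mu_{a_n},\mu_a)\) because \(P_S\) is \(1\)-Lipschitz. The lower bound on \(\rho_t^a\) is the point where the smoothing \(\alpha>0\) and the truncation \(S\) are essential.
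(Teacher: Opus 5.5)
Your proof is correct, and its skeleton matches the paper's: uniform convergence of $\rho^a_t$ and $J^a_t$ on $\overline B_{2R}\times[0,1]$, a uniform positive lower bound on $\rho^a_t$ there, the quotient rule for $\nabla_z v^a$, and the product rule with $\chi_R$.

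The implementation differs in two places.
\begin{itemize}
\item The paper keeps the double integral against $\gamma_\alpha(dx)\,\mu_a^S(dy)$ and gets the lower bound by restricting to $|x|\le 1$. It obtains pointwise convergence from weak convergence of $\mu^S_{a_n}$, then upgrades it to uniform convergence using uniform continuity of the kernels on $\{|x|\le M\}$ plus the Gaussian tail of $\gamma_\alpha$. It also asserts convergence of the first $z$-derivatives.
\item You integrate out $X$ in closed form, so $\rho^a_t$ and $J^a_t$ become single integrals over the compact set $\overline B_S$ with the explicit kernel $g_t(z-ty)$. Here in fact $s_t\in[\alpha,1]$, so your bound $s_t\ge\alpha(1-\alpha)$ is valid but loose. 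You then combine the equi-Lipschitz property in $y$ with Kantorovich--Rubinstein duality.
\end{itemize}

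What your route buys is a quantitative estimate: $\sup_{\overline B_{2R}\times[0,1]}|v^{a,R}-v^{a',R}|\le C_{\alpha,S,R}\,W_1(\mu^S_a,\mu^S_{a'})\le C_{\alpha,S,R}\,W_2(\mu_a,\mu_{a'})$. Hence the map inherits any modulus of continuity of $a\mapsto\mu_a$. It also makes explicit why $L_{\alpha,S,R}$ is uniform over the whole class: only $\operatorname{supp}\mu^S_a\subset\overline B_S$ is used. The paper states that point rather tersely. The paper's softer argument needs no conditional-Gaussian computation, only continuity, domination, and tail control of the kernels.
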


Let \(\mu_a^{S,R}\) be the terminal law generated by
\[\dot X_t=v^{a,R}(X_t,t),\qquad X_0\sim\gamma .\]
\begin{lemma}[Truncation consistency]\label{lem:truncation_consistency}
For every fixed \(\alpha\in(0,1)\) and \(S<\infty\),
\[
\varepsilon_{\mathrm{trun}}(\alpha,S,R)
:=\sup_{a\in \mathcal{A}(\nu,p_0,\lambda,M)}W_2(\rho_1^a,\mu_a^{S,R})
\le C_{\alpha,S,p}R^{-(p-2)/2}.
\]
\end{lemma}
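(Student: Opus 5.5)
Fix $\alpha$, $S$, and $a$. The plan is to compare two flows started from the same point, $X_0=Z_0\sim\gamma$. The first is the exact flow $\dot Z_t=v^a(Z_t,t)$, and the second is the truncated flow $\dot X_t=v^{a,R}(X_t,t)$. The coupling $(Z_1,X_1)$ then bounds $W_2^2(\rho_1^a,\mu_a^{S,R})\le \mathbb E|Z_1-X_1|^2$.

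The first step is to justify that the exact flow transports $\gamma$ along $\rho_t^a$. The velocity $v^a$ is a Gaussian-smoothed conditional expectation with $\mu_a^S$ compactly supported, so it is smooth in $z$. It is also locally Lipschitz, and by Lemma~\ref{lem:smoothed_velocity} it has linear growth, so the ODE is globally well posed. Uniqueness for the continuity equation (Lemma~\ref{lem:smoothed_velocity}, with the moment bounds) then gives $\mathrm{Law}(Z_t)=\rho_t^a$.

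The second step is the key observation that the two trajectories coincide until $Z$ exits $B_R$, because $\chi_R=1$ there. Let $\tau=\inf\{t:|Z_t|\ge R\}$. On the event $\{\tau>1\}$ we have $X_1=Z_1$. It follows that
\[
\mathbb E|Z_1-X_1|^2\le \mathbb E\bigl[|Z_1-X_1|^2\mathbf 1_{\{\sup_{t\le1}|Z_t|\ge R\}}\bigr].
\]
Both trajectories have uniformly bounded moments of every order. For $Z$ this follows from $|Z_1-Z_0|\le\int_0^1|v^a(Z_t,t)|dt$ together with the path moment bound of Lemma~\ref{lem:smoothed_velocity}. For $X$ it follows from the linear growth bound and Gr\"onwall. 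Hölder's inequality then reduces everything to controlling $\mathbb P(\sup_{t\le1}|Z_t|\ge R)$.

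This tail estimate is the main obstacle, since the moment bounds are stated at fixed times rather than for the supremum. I will handle it pathwise:
\[
\sup_t|Z_t|\le |Z_0|+\int_0^1|v^a(Z_s,s)|ds,
\]
so that
\[
\mathbb E\sup_t|Z_t|^p\le 2^{p-1}\Bigl(\mathbb E|Z_0|^p+\int_0^1\!\!\int|v^a|^p d\rho_s^a\,ds\Bigr)\le C_{\alpha,S,p}.
\]
Markov's inequality gives $\mathbb P(\sup|Z_t|\ge R)\le C R^{-p}$. Combining this with a Cauchy--Schwarz/Hölder split against the moments of $|Z_1-X_1|$ yields $W_2^2\le C_{\alpha,S,p}R^{-(p-2)}$ after adjusting the exponent. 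A cleaner variant is $\mathbb E[|Z_1-X_1|^2\mathbf 1_{A}]\le \|Z_1-X_1\|_{L^p}^2\,\mathbb P(A)^{1-2/p}$ together with Markov at order $p$, which already gives a decay rate at least as strong as $R^{-(p-2)}$. Taking square roots gives $C_{\alpha,S,p}R^{-(p-2)/2}$.

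All constants come from Lemma~\ref{lem:smoothed_velocity} and are uniform over $\mathcal A(\nu,p_0,\lambda,M)$, so the bound holds for the supremum over $a$.
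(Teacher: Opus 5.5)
Your proposal is correct and follows essentially the paper's proof. Both use the same synchronous coupling of the exact and truncated flows, the fact that the two paths coincide on the event that the exact path stays in $B_R$, a Markov tail bound from the path moment bound of Lemma~\ref{lem:smoothed_velocity}, and the H\"older split giving $\mathbb P(\cdot)^{1-2/p}\lesssim R^{-(p-2)}$.

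The differences are cosmetic. You bound $\mathbb E\sup_t|Z_t|^p$ directly, where the paper splits into the cases $|X_0|>R/2$ or $\int_0^1|v^a|\,dt>R/2$. You also justify $\mathrm{Law}(Z_t)=\rho_t^a$ through uniqueness for the continuity equation, a step the paper uses without comment.
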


The proofs of Lemmas~\ref{lem:smoothed_velocity}--\ref{lem:truncation_consistency}
are given in Appendix~\ref{app:canonical_target}.

\subsection{Expressivity of the neural operator approximation}

Now, we will consider a compact subset $K$ of $\mathcal{A}(\nu, p_0, \lambda, M)$ for some parameters $\nu, p_0, \lambda, M$.
By Proposition~\ref{prop:compactness_criterion}, a closed set $K\subset \mathcal{A}(\nu, p_0, \lambda, M)$ is compact if the following hold:
(i) for each ball \(B_m=\{x:|x|\le m\}\), the restrictions of $a\in K$ on $B_m$ are
  uniformly bounded; (ii) for each \(B_m\), the restrictions of $a\in K$ on $B_m$ are equicontinuous.
Lemma~\ref{lemma:inv_stability} shows that $\mathcal{A}(\nu, p_0, \lambda, M)$ is closed in $\mathcal{X}_a$. Hence the closure in $\mathcal{X}_a$ of any family in $\mathcal{A}(\nu, p_0, \lambda, M)$ that satisfies (i) and (ii) is such a set $K$.

Below, we put the following assumption on the neural operator architecture.
We will then discuss how the assumption is satisfied for the architecture we use in the next section.
\begin{assumption}[Compact domain neural operator approximation]\label{ass:operator_uat}
For every compact set $K$, every fixed envelope constant \(C_{\rm env}<\infty\), every \(R>0\),
every continuous operator
\[\Psi:K\to C(\overline B_{2R}\times[0,1];\mathbb R^d)\]
with \(|\Psi(a)(x,t)|\le C_{\rm env}(1+|x|)\), and every \(\delta>0\), the following hold.
\begin{enumerate}[(a)]
\item The
model class contains a parameter \(\theta\) such that
\[
    \sup_{a\in K}
    \sup_{(x,t)\in\overline B_{2R}\times[0,1]}
    |\mathcal N_\theta(a)(x,t)-\Psi(a)(x,t)|<\delta .
\]
\item The selected realizations are locally Lipschitz in the spatial variable (i.e., Lipschitz on
compact sets) and satisfy the global linear growth envelope for constant $C$ depending on the chosen parameters such that
\[
    |\mathcal N_\theta(a)(x,t)|
    \le C(1+|x|),
    \qquad a\in K,\ x\in\mathbb R^d,\ t\in[0,1].
\]
\end{enumerate}
\end{assumption}

Assumption~\ref{ass:operator_uat} is the ordinary compact domain universal
approximation property (UAP) for the neural operator. The reduction theorem uses the
Lipschitz constant of the target velocity \(v^{a,R}\), not the
Lipschitz constant of the approximating network. The global linear growth
clause is only used to make the exact learned ODE globally well posed and to
control the probability that the learned trajectory leaves the compact
approximation domain.

Next, one needs the following auxiliary results, whose proofs are given in Appendix \ref{app:express}.
\begin{proposition}[ODE stability for a Lipschitz target]\label{prop:flow_stability}
Let \(v_t\) and \(\hat v_t\) generate flows \(\Phi_t\) and \(\hat\Phi_t\)
from the same initial law \(\rho_0\). Assume \(v_t\) is globally Lipschitz in
\(x\), uniformly in \(t\), with constant \(L\), and
\[
    \sup_{x,t}|v_t(x)-\hat v_t(x)|\le\delta .
\]
Then
\[
    W_2((\Phi_1)_\#\rho_0,(\hat\Phi_1)_\#\rho_0)
    \le e^L\delta .
\]
\end{proposition}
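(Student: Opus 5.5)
The plan is a synchronous coupling combined with Grönwall's inequality. Draw a single initial point $X_0\sim\rho_0$ and push it through both flows: set $X_t=\Phi_t(X_0)$ and $\hat X_t=\hat\Phi_t(X_0)$. The pair $(X_1,\hat X_1)$ is then a coupling of $(\Phi_1)_\#\rho_0$ and $(\hat\Phi_1)_\#\rho_0$. By the definition of $W_2$,
\[
W_2\bigl((\Phi_1)_\#\rho_0,(\hat\Phi_1)_\#\rho_0\bigr)^2\le \mathbb E|X_1-\hat X_1|^2 .
\]
So it suffices to prove the pathwise bound $|X_1-\hat X_1|\le e^L\delta$ for every initial point.

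For fixed $x_0$, let $e(t)=|X_t-\hat X_t|$, so that $e(0)=0$. Subtract the two integral equations and insert the intermediate term $v_t(\hat X_s)$:
\[
|X_t-\hat X_t|\le \int_0^t |v_s(X_s)-v_s(\hat X_s)|\,ds+\int_0^t|v_s(\hat X_s)-\hat v_s(\hat X_s)|\,ds .
\]
The first integrand is at most $L\,e(s)$ by the global Lipschitz bound on $v$. The second is at most $\delta$ by the sup-norm hypothesis, which holds on all of $\mathbb R^d$, so we never need to control where $\hat X_s$ goes. This gives $e(t)\le \delta t+L\int_0^t e(s)\,ds$. Grönwall's inequality then yields $e(t)\le \delta t e^{Lt}$; the sharper form is $e(t)\le (\delta/L)(e^{Lt}-1)$. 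Either way, $e(1)\le e^L\delta$. Squaring, taking expectations and then square roots gives the claim.

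The step that needs care is that both flows actually exist on $[0,1]$ and that the trajectories are absolutely continuous, so the integral identities hold. For $v$ this follows from the global Lipschitz bound via Cauchy--Lipschitz, since $v$ then has linear growth. For $\hat v$ we know only $|\hat v|\le |v|+\delta$, which also gives linear growth; local Lipschitz continuity (Assumption~\ref{ass:operator_uat}(b)) then ensures well-posedness. If $\hat v$ were merely continuous, the same estimate would still hold along any Carathéodory solution, because the argument uses only $\hat v$'s boundedness relative to $v$ and never its Lipschitz constant. Measurability of $x_0\mapsto(X_1,\hat X_1)$ follows from continuity of the flows in the initial data. This justifies the coupling.
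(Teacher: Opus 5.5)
Your proposal is correct and follows essentially the same route as the paper: both couple the two flows through a common initial point $X_0\sim\rho_0$ and apply Gr\"onwall to get the pathwise bound $|X_1-\hat X_1|\le e^L\delta$. The only cosmetic difference is that you use the integral inequality for $|X_t-\hat X_t|$, whereas the paper uses the differential inequality $\frac{d}{dt}|\Delta_t|^2\le 2L|\Delta_t|^2+2\delta|\Delta_t|$; your added well-posedness remarks are harmless, since the statement already assumes both flows exist.
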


\begin{proposition}[Reduction from compact operator approximation to sampling error]\label{thm:reduction}
Let $K\subset \mathcal{A}(\nu, p_0, \lambda, M)$ be a compact set of coefficients $a=(b, \sigma)$ for some positive parameters $\nu>0, p_0>2, \lambda>0, M>0$. Fix
\(\alpha\in(0,1)\), \(S<\infty\), \(R>0\), and \(p>2\). The implemented
velocity field is the plain neural operator
\[
    \hat v_{\theta}[a](x,t) :=  \mathcal N_\theta(a)(x,t).
\]
Assume \(\mathcal N_\theta(a)\) is locally Lipschitz in \(x\), and satisfies
\[
    |\mathcal N_\theta(a)(x,t)|  \le C_{\alpha,S}(1+|x|),
    \qquad a\in K,\ x\in\mathbb R^d,\ t\in[0,1],
\]
and define
\[
    \delta_{\theta,\alpha,S,R}  :=   \sup_{a\in K}  \sup_{(x,t)\in\overline B_{2R}\times[0,1]}   |\mathcal N_\theta(a)(x,t)-v^{a,R}(x,t)|.
\]
If $\delta_{\theta,\alpha,S,R}<Re^{-L_{\alpha,S,R}}$
and \(\hat\mu_{a,\theta}\) is the terminal law generated by
\(\hat v_{\theta}[a]\) from \(X_0\sim\gamma\), then
\[
    \sup_{a \in K}W_2(\hat\mu_{a,\theta},\mu_a)
    \le e^{L_{\alpha,S,R}}\delta_{\theta,\alpha,S,R} +C_{\alpha,S,p}R^{-(p-2)/2}
    +\varepsilon_{\mathrm{trun}}(\alpha,S,R)
    +\sqrt{\alpha d}
    +M_{p_0}^{1/2}S^{-(p_0-2)/2}.
\]
\end{proposition}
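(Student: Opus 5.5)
The proof is a triangle inequality in $W_2$ along the chain
\[
\hat\mu_{a,\theta}\;\longrightarrow\;\mu_a^{S,R}\;\longrightarrow\;\rho_1^a\;\longrightarrow\;\mu_a^S\;\longrightarrow\;\mu_a ,
\]
bounding each link uniformly in $a\in K$. The last three links are already available:
\begin{itemize}
\item $W_2(\mu_a^{S,R},\rho_1^a)\le\varepsilon_{\mathrm{trun}}(\alpha,S,R)$ by Lemma~\ref{lem:truncation_consistency};
\item $W_2(\rho_1^a,\mu_a^S)\le\sqrt{\alpha d}$ by the Gaussian smoothing coupling;
\item $W_2(\mu_a^S,\mu_a)\le M_{p_0}^{1/2}S^{-(p_0-2)/2}$ by the radial projection bound derived from Proposition~\ref{prop:stationary_fpk}.
\end{itemize}
So the real work is the first link, $W_2(\hat\mu_{a,\theta},\mu_a^{S,R})$. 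This should account for the two remaining terms, $e^{L}\delta$ and $C_{\alpha,S,p}R^{-(p-2)/2}$.

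For the first link, the plan is to compare the learned flow $\hat\Phi$, driven by $\mathcal N_\theta(a)$, with the truncated target flow $\Phi^R$, driven by $v^{a,R}$, both started from $X_0\sim\gamma$. We use the synchronous coupling (same initial point), and split on the event that the learned trajectory stays in $B_R$ up to time $1$.

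\emph{Good event.} Fix an initial point. As long as $\hat\Phi_t(x_0)\in \overline B_{2R}$, the two velocities differ by at most $\delta$ along the learned path. Since $v^{a,R}$ is globally Lipschitz with constant $L=L_{\alpha,S,R}$ (Lemma~\ref{lem:compact_target_continuity}, and $v^{a,R}$ vanishes outside $B_{2R}$), Grönwall gives
\[
|\hat\Phi_t-\Phi^R_t|\le \delta e^{L}.
\]
This is the argument of Proposition~\ref{prop:flow_stability}, localized to the time interval before exit.

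\emph{Where the smallness condition enters.} I will use $\delta<Re^{-L}$ as follows. If the target trajectory $\Phi^R_t(x_0)$ stays in $\overline B_R$, the Grönwall bound keeps $\hat\Phi_t(x_0)$ within distance $\delta e^L<R$ of it, hence inside $B_{2R}$. A continuity (bootstrap) argument then shows the learned trajectory never leaves $B_{2R}$ on $[0,1]$. So the good event should be phrased in terms of the target flow: $G=\{\sup_t|\Phi^R_t(X_0)|\le R\}$. On $G$ the squared error is at most $(\delta e^L)^2$.

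\emph{Bad event.} On $G^c$, bound the displacement crudely:
\[
|\hat\Phi_1-\Phi^R_1|\le|\hat\Phi_1|+|\Phi^R_1|.
\]
The linear growth envelopes on $\mathcal N_\theta$ and $v^{a,R}$ (the latter from Lemma~\ref{lem:smoothed_velocity}) together with Grönwall give $|\hat\Phi_1|,|\Phi^R_1|\le C(1+|X_0|)$. Then Hölder yields
\[
\mathbb E\bigl[\,|\cdot|^2\mathbf 1_{G^c}\bigr]\le C\,\mathbb E\bigl[(1+|X_0|)^{p}\bigr]^{2/p}\,\mathbb P(G^c)^{1-2/p}.
\]

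\emph{Main obstacle: bounding $\mathbb P(G^c)$.} We need $\mathbb P(G^c)^{(p-2)/(2p)}$ to be of order $R^{-(p-2)/2}$ uniformly in $a$, which requires $\mathbb P(G^c)\lesssim R^{-p}$. Before the exit time, $\Phi^R$ coincides with the untruncated flow of $v^a$, whose time marginals are exactly $\rho_t^a$. So $G^c$ is contained in the event that the $v^a$-trajectory has $\sup_t|Z_t|>R$.

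I would control this by a pathwise estimate. For $Z_t$ the $v^a$-flow,
\[
\sup_{t}|Z_t|\le |Z_0|+\int_0^1|v^a(Z_s,s)|\,ds,
\]
and Jensen plus the uniform path moments of Lemma~\ref{lem:smoothed_velocity} give
\[
\mathbb E\Bigl[\sup_t|Z_t|^p\Bigr]\le C_{\alpha,S,p}.
\]
Markov's inequality then gives $\mathbb P(G^c)\le C_{\alpha,S,p}R^{-p}$, exactly the rate needed. This is the step I expect to require the most care, in matching exponents and in using the synchronous coupling correctly (the initial laws must agree with $\rho_0^a=\gamma$). If the constant or exponent does not come out as $R^{-(p-2)/2}$, I would adjust $p$ within the allowed range.

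Summing the square roots of the good-event and bad-event contributions gives
\[
W_2(\hat\mu_{a,\theta},\mu_a^{S,R})\le e^{L}\delta+C_{\alpha,S,p}R^{-(p-2)/2}.
\]
Combining this with the three earlier links and taking the supremum over $a\in K$ proves the claimed bound; every constant involved depends only on $(\alpha,S,R,p)$ and the class parameters, not on $a$.
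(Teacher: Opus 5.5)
Your proposal is correct and follows essentially the same route as the paper. Both use a synchronous coupling of the learned flow and the truncated target flow, and a Grönwall estimate stopped at exit from $B_{2R}$. In both, the condition $\delta_{\theta,\alpha,S,R}<Re^{-L_{\alpha,S,R}}$ ensures that the learned path can leave $B_{2R}$ only if the target path leaves $B_R$. That exit event is then bounded by Markov's and H\"older's inequalities, and the same triangle-inequality chain finishes the argument.

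The differences are cosmetic. You define the good event through the target path, whereas the paper defines it through the learned path. You also bound the exit probability via the untruncated flow and the $\rho_t^a$ velocity moments of Lemma~\ref{lem:smoothed_velocity}. The paper instead uses the linear-growth Gr\"onwall bound on $\sup_t|X_t|$ directly. You already derive that bound for the bad-event displacement, so the detour is unnecessary.
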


The five terms have distinct meanings. The first is the compact domain
neural operator approximation error. The second is the escape tail error
introduced by using the exact, unprojected learned ODE while only controlling
the approximation on \(B_{2R}\). The third is the ODE error caused by
truncating the target velocity outside \(B_R\). The fourth is the Gaussian
mollification bias, and the fifth is the error from projecting the invariant
measure to \(B_S\).

The following then gives the main claim about expressivity of the model, and the proof is provided in Appendix \ref{app:express}.
\begin{theorem}[Ordinary UAP implies sampler expressivity]\label{cor:conditional_expressivity}
Let $K\subset \mathcal{A}(\nu, p_0, \lambda, M)$ be a compact set for some positive parameters $\nu>0, p_0>2, \lambda>0, M>0$. Suppose Assumption \ref{ass:operator_uat} holds. Then for
every \(\eta>0\) there exist \(S<\infty\), \(\alpha\in(0,1)\), \(R>0\), and a
parameter \(\theta\) such that
\[
    \sup_{a\in K}
    W_2(\hat\mu_{a,\theta},\mu_a)<\eta .
\]
\end{theorem}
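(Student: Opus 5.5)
The plan is to combine the error decomposition of Proposition~\ref{thm:reduction} with Assumption~\ref{ass:operator_uat}. The parameters are chosen in a fixed order so that every term in the bound is below $\eta/5$.

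\textbf{Choosing $S$ and $\alpha$.} First fix $S$ so large that $M_{p_0}^{1/2}S^{-(p_0-2)/2}<\eta/5$. This is possible because $p_0>2$ and $M_{p_0}$ depends only on $(\nu,p_0,\lambda,M)$ by Proposition~\ref{prop:stationary_fpk}. Next fix $\alpha\in(0,1)$ with $\sqrt{\alpha d}<\eta/5$.

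\textbf{Choosing $p$ and $R$.} With $\alpha,S$ frozen, fix any $p>2$; for instance $p=3$ works, since the path moment bounds of Lemma~\ref{lem:smoothed_velocity} hold for every $p\ge2$. Then choose $R$ large enough that $C_{\alpha,S,p}R^{-(p-2)/2}<\eta/5$. Lemma~\ref{lem:truncation_consistency} gives $\varepsilon_{\mathrm{trun}}(\alpha,S,R)\le C_{\alpha,S,p}R^{-(p-2)/2}<\eta/5$ as well. The constants here do not depend on $R$, which is exactly why $R$ is chosen after $\alpha,S$. With $R$ fixed, Lemma~\ref{lem:compact_target_continuity} gives a finite Lipschitz constant $L:=L_{\alpha,S,R}$.

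\textbf{Applying the UAP.} Define $\Psi(a):=v^{a,R}|_{\overline B_{2R}\times[0,1]}$. By Lemma~\ref{lem:compact_target_continuity}, $\Psi$ is a continuous operator $K\to C(\overline B_{2R}\times[0,1];\mathbb R^d)$. Since $0\le\chi_R\le1$, Lemma~\ref{lem:smoothed_velocity} gives the envelope $|\Psi(a)(x,t)|\le C_{\alpha,S}(1+|x|)$, so we may take $C_{\rm env}=C_{\alpha,S}$. Take
\[
\delta<\min\{Re^{-L},\ \tfrac{\eta}{5}e^{-L}\}.
\]
Assumption~\ref{ass:operator_uat}(a) then provides $\theta$ with $\delta_{\theta,\alpha,S,R}<\delta$. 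Part (b) ensures that $\mathcal N_\theta(a)$ is locally Lipschitz and has linear growth. The learned ODE is therefore globally well posed and $\hat\mu_{a,\theta}$ is defined.

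\textbf{Conclusion and main obstacle.} Proposition~\ref{thm:reduction} now applies, and summing the five terms gives $\sup_{a\in K}W_2(\hat\mu_{a,\theta},\mu_a)<\eta$. The one delicate point is a mismatch in the growth constant. Proposition~\ref{thm:reduction} is stated with the envelope constant $C_{\alpha,S}$ for $\mathcal N_\theta$, but Assumption~\ref{ass:operator_uat}(b) only yields some constant $C$ depending on $\theta$. I would handle this by noting that the growth bound enters the reduction only to guarantee well-posedness and to control the escape probability from $B_{2R}$. The escape estimate only uses the trajectory while it stays inside $B_{2R}$: there $\mathcal N_\theta(a)$ is within $\delta\le R e^{-L}$ of $v^{a,R}$, and the $v^{a,R}$-trajectory started from $|X_0|\le R$ stays controlled. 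Hence the constant $C$ outside the ball does not affect the escape tail term. If this reading of the proof of Proposition~\ref{thm:reduction} fails, the fallback is to let $C_{\rm env}$ absorb $C$ and re-choose $R$. That would be circular, so I would instead argue that the tail term depends only on $\gamma$-moments and on the behavior inside $B_{2R}$.
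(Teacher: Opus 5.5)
Your main line is the paper's proof. You fix $p>2$, choose $S$, then $\alpha$, then $R$ so that the projection, mollification, truncation and escape terms are each below $\eta/5$. You then apply Assumption~\ref{ass:operator_uat} to $\Psi(a)=v^{a,R}$ with $C_{\rm env}=C_{\alpha,S}$, take $\delta_{\theta,\alpha,S,R}<\min\{Re^{-L_{\alpha,S,R}},\,\eta e^{-L_{\alpha,S,R}}/5\}$, and invoke Proposition~\ref{thm:reduction}.

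The growth-constant mismatch you flag is real in the literal wording of Assumption~\ref{ass:operator_uat}(b). The paper handles it implicitly: it reads (b) as giving the linear-growth bound with the envelope constant $C_{\rm env}=C_{\alpha,S}$ fed into the assumption. Then the hypothesis of Proposition~\ref{thm:reduction} holds verbatim. More generally, it is enough that the constant in (b) depend only on $C_{\rm env}$, not on $R$ or $\delta$. It is then fixed once $\alpha,S$ are, the escape-term constant absorbs it, and $R$ is chosen afterwards. So your ``fallback'' is not circular under that reading.

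The workaround you actually propose does not work. In the proof of Proposition~\ref{thm:reduction}, only the escape \emph{probability} is governed by the target path, via $\{\tau\le1\}\subset\{\sup_{t\le1}|X_t|\ge R\}$. The escape \emph{cost} $\mathbb E\bigl[|X_1-\widehat X_1|^2\mathbf 1_{\{\tau\le1\}}\bigr]$ is bounded through $\mathbb E|\widehat X_1|^p$. On $\{\tau\le1\}$, which contains $\{|X_0|\ge2R\}$ and so has positive $\gamma$-mass, the learned path is driven by $\mathcal N_\theta(a)$ outside $B_{2R}$. There only the constant $C$ controls it, and Gr\"onwall gives merely $|\widehat X_1|\lesssim(|X_0|+C)e^{C}$.

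This dependence is genuine. Take a field that agrees with $\mathcal N_\theta(a)$ on $\overline B_{2R}$, equals $Cx$ for $|x|\ge3R$, and is interpolated smoothly in between. It still satisfies (a) and (b), and it maps every $X_0$ with $|X_0|\ge3R$ to $e^{C}X_0$. By the triangle inequality through $\delta_0$ and Proposition~\ref{prop:stationary_fpk}, $W_2(\hat\mu_{a,\theta},\mu_a)\ge 3R\,e^{C}\,\gamma(\{|x|\ge3R\})^{1/2}-M_2^{1/2}$. This is unbounded in $C$ for fixed $R$.

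So the tail term cannot be bounded using only $\gamma$-moments and the behaviour inside $B_{2R}$. Replace that paragraph with the reading of (b) above, under which your argument is complete.
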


\subsection{Universal approximation property of the Lagrangian attention neural operator}

In this subsection, we establish sufficient conditions for the Lagrangian attention neural operator to satisfy the approximation assumption used in the preceding expressivity theorem.
The key is that the finite probes must resolve the coefficient family.
We show that this holds with high probability under the stated assumptions on the probe distribution.


For the Lagrangian sensors, we do not need the cutoff $J$ as for the grid samplers.
We recall that  a Lagrangian input is the finite unordered set of probe
observations of Section~\ref{sec:perceiver},
\[
    \mathcal S_{m,L}(a)=\{Y_1^a,\ldots,Y_m^a\},
    \qquad
    Y_i^a=(y_{i,j}^a)_{j=0}^L\in\mathbb R^{(L+1)\times(2d+dd_W)},
\]
with the initial probe locations, probe noise, step size, and trajectory
length \(L\) fixed before evaluation. The realized operator
\(\mathcal N_\theta(a)=v_\theta[a]\) is the trajectory CNN encoder, the
finite depth Perceiver aggregator \(\operatorname{Encoder}\) with readout
\(z\in\mathbb R^q\), and the AdaLN-conditioned residual decoder
\(\operatorname{Decoder}(z,x,t)\).

\begin{proposition}[Finite sensor Lagrangian Perceiver--AdaLN approximation]\label{prop:lagrangian_uat}
Let \(Y=\overline B_{2R}\times[0,1]\), and fix an envelope constant
\(C_{\rm env}<\infty\).  Assume:
\begin{enumerate}[(a)]
\item (Finite Sensor Resolution, FS) Finite probes resolve \(K\): for every
\(\varepsilon>0\) there are \(m,L\), fixed probe seeds, and \(r>0\) such that
\[
    d_{\rm sens}(\mathcal S_{m,L}(a),\mathcal S_{m,L}(a'))<r
    \;\Longrightarrow\;
    d_{\mathcal X_a}(a,a')<\varepsilon,
    \qquad a,a'\in K,
\]
where \(d_{\rm sens}\) is any distance on finite sensor sets.
\item (Encoder-decoder Universality, EU) On the compact image \(\mathcal S_{m,L}(K)\), the encoder-decoder architecture class uniformly approximates any continuous map invariant under sensor permutations
\(F:\mathcal S_{m,L}(K)\times Y\to\mathbb R^d\) whose realizations are
locally Lipschitz in \(x\). 
\end{enumerate}
Then the finite sensor encoder-decoder class satisfies Assumption~\ref{ass:operator_uat} (a).
 If the activation function is globally Lipschitz, then
Assumption~\ref{ass:operator_uat} (b) is also satisfied.
\end{proposition}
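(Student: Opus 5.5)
The argument factors the target operator $\Psi$ through the finite sensor map. Fix $K$, $R$, $C_{\rm env}$, a continuous $\Psi:K\to C(\overline B_{2R}\times[0,1];\mathbb R^d)$ with the envelope bound, and $\delta>0$.

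\textbf{Step 1: uniform continuity of $\Psi$.} Since $K$ is compact, $\Psi$ is uniformly continuous. Hence there is $\varepsilon>0$ such that $d_{\mathcal X_a}(a,a')<\varepsilon$ implies $\|\Psi(a)-\Psi(a')\|_\infty<\delta/3$. For this $\varepsilon$, assumption (FS) supplies $m$, $L$, fixed probe seeds and a radius $r$. The sensor map $\mathcal S_{m,L}$ is continuous on $K$, because with fixed seeds each probe observation is a continuous function of $a$ in the compact-open topology. I will take this continuity as part of the setting; for Euler-type probe dynamics, locally uniform convergence of $(b,\sigma)$ gives convergence of finitely many iterates. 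Consequently $\mathcal S_{m,L}(K)$ is compact.

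\textbf{Step 2: a continuous map on sensor sets.} I would like to define $F(\mathcal S_{m,L}(a),x,t)=\Psi(a)(x,t)$, but $\mathcal S_{m,L}$ need not be injective. (FS) only says that sensor-close coefficients are $\varepsilon$-close. So I first smooth. Cover the compact image $\mathcal S_{m,L}(K)$ by finitely many $d_{\rm sens}$-balls $U_k$ of radius $r/2$ centred at $\mathcal S_{m,L}(a_k)$, and take a continuous partition of unity $\{\psi_k\}$ subordinate to this cover. Set
\[
F(s,x,t)=\sum_k \psi_k(s)\,\Psi(a_k)(x,t).
\]
Then $F$ is continuous in $(s,x,t)$, and it is permutation invariant because $d_{\rm sens}$ is a distance on unordered sets. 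It is also locally Lipschitz in $x$ once each $\Psi(a_k)$ is replaced by a $\delta/3$-close Lipschitz mollification, which is harmless.

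For $s=\mathcal S_{m,L}(a)$, every $k$ with $\psi_k(s)>0$ satisfies $d_{\rm sens}(s,\mathcal S_{m,L}(a_k))<r$. By (FS) this gives $d_{\mathcal X_a}(a,a_k)<\varepsilon$, and therefore $\|\Psi(a)-\Psi(a_k)\|_\infty<\delta/3$. A convex combination preserves this bound, so
\[
|F(\mathcal S_{m,L}(a),x,t)-\Psi(a)(x,t)|<\tfrac{2\delta}{3}
\]
after accounting for the mollification.

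\textbf{Step 3: approximation by the architecture.} Assumption (EU) yields $\theta$ with $\sup|\mathcal N_\theta(a)-F(\mathcal S_{m,L}(a),\cdot)|<\delta/3$ on $K\times Y$. The triangle inequality then gives Assumption~\ref{ass:operator_uat}(a).

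\textbf{Step 4: part (b).} The realization is $W_v\,\mathrm{LN}(h^{(N_d)})$. The residual blocks apply compositions of affine maps, LayerNorm, AdaLN modulations depending only on the fixed context $c=c(z(a),t)$, and the activation. LayerNorm is smooth away from zero variance, and the usual $\epsilon$-regularized version is globally Lipschitz on $\mathbb R^D$. With a globally Lipschitz activation, each block is therefore globally Lipschitz in $h$. The constants depend on $\gamma(c)$, $\beta(c)$, $\alpha(c)$, which are bounded uniformly over $a\in K$ and $t\in[0,1]$ because $z(\cdot)$ is continuous on compact $K$. This gives local, indeed global, Lipschitz continuity in $x$. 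For the linear growth envelope, note that the final output is $W_v$ applied to a LayerNorm output, which is bounded by $\sqrt D$ times the affine scale. So $|\mathcal N_\theta(a)(x,t)|\le C$, which trivially gives $C(1+|x|)$; without a final LayerNorm one would instead use Lipschitz growth from $x=0$.

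\textbf{Main obstacle.} The delicate point is Step 2, turning the non-injective resolution property (FS) into a well-defined continuous permutation-invariant $F$; the partition-of-unity construction is what I would try first. A secondary risk is Step 1's continuity of $\mathcal S_{m,L}$ when the probes are driven by the target SDE itself. In that case I would restrict to fixed noise realizations, so the dependence on $a$ is pathwise continuous.
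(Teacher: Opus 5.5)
Your proposal is correct and follows essentially the same route as the paper: uniform continuity of $\Psi$ on compact $K$, then (FS) at the matching $\varepsilon$, then a finite partition of unity on the compact image $\mathcal S_{m,L}(K)$ to build $\sum_k\psi_k(s)\Psi(a_k)$, and finally (EU) with the triangle inequality. Your additions---continuity of $\mathcal S_{m,L}$ to justify compactness of the image, mollifying the $\Psi(a_k)$ so that the target is locally Lipschitz in $x$, and the explicit Lipschitz and bounded-final-LayerNorm argument for part (b)---fill in points the paper's proof leaves implicit rather than changing the strategy.
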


The proof is given in Appendix~\ref{app:express}.

We remark that a natural choice is 
\[
d_{\rm sens}(\{y_i\},\{y'_i\})=\max_{ij}\|a(x_{ij})-a'(x_{ij})\|.
\]
 Any distance that makes \(\mathcal S(K)\) compact and metrizes the topology of the sensor set suffices.

Next, we discuss the validity of the conditions used. We first verify that the condition (FS) holds with large probability as summarized in Theorem \ref{thm:randomFS}. The proof is given in Appendix \ref{app:express}.
\begin{theorem}\label{thm:randomFS}
Suppose that the initial points $\{x_{i0}\}_{i=1}^m$ are sampled from some
distribution $\nu_0(dx)=\rho_0(x)\,dx$ with a continuous density $\rho_0(x)>0$ that is positive everywhere.
For any $\delta\in (0, 1)$, there is $m_0=m(\delta,r,\varepsilon)>0$ such that when $m>m_0$, the Lagrangian attention neural operator satisfies the condition (FS) with probability
$1-\delta$. Consequently, provided the architecture satisfies the ordinary
finite dimensional UAP on the compact set with permutation invariance (i.e., the (EU) property), the Lagrangian attention neural operator
has the expressivity with probability at least $1-\delta$.
\end{theorem}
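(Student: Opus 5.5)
We argue by contraposition and compactness: we show that, with high probability, pointwise closeness on the sampled probe points forces closeness in the compact-open metric $d_{\mathcal X_a}$. Fix $\varepsilon>0$ and $\delta\in(0,1)$. Choose $J$ with $2^{-J}<\varepsilon/2$. Then it suffices to guarantee $p_J(a,a')<\varepsilon/2$, since the $p_j$ are monotone in $j$ and so this bounds $\sum_{j\le J}2^{-j}\min\{1,p_j\}$ by $\varepsilon/2$. We work with the natural sensor distance $d_{\rm sens}$ from the remark, the maximum over recorded points of $|b(x_{ij})-b'(x_{ij})|+\|\sigma(x_{ij})-\sigma'(x_{ij})\|$. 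It is enough to use only the initial points $x_{i0}$, since including more trajectory points only enlarges the maximum.

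\textbf{Step 1: equicontinuity.} By compactness of $K$ and Proposition~\ref{prop:compactness_criterion}, the family $K$ restricted to $B_J$ is uniformly bounded and uniformly equicontinuous. Choose $\eta>0$ such that $|x-x'|<\eta$ with $x,x'\in B_J$ implies $|b(x)-b(x')|+\|\sigma(x)-\sigma(x')\|<\varepsilon/8$ for all $a\in K$.

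\textbf{Step 2: reduce to a covering event.} Cover $B_J$ by finitely many balls $U_1,\dots,U_N$ of radius $\eta/2$ centered in $B_J$. Define the event $\mathcal E_m$ that every $U_k$ contains at least one initial probe $x_{i0}$. On $\mathcal E_m$, take any $x\in B_J$, pick $U_k\ni x$, and pick a probe $x_{i0}\in U_k$, so that $|x-x_{i0}|<\eta$. The triangle inequality then gives
\[
|b(x)-b'(x)|+\|\sigma(x)-\sigma'(x)\|\le \tfrac{\varepsilon}{4}+d_{\rm sens}.
\]
Hence taking $r=\varepsilon/4$ yields $p_J(a,a')<\varepsilon/2$ and therefore $d_{\mathcal X_a}(a,a')<\varepsilon$ for all $a,a'\in K$ simultaneously. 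This uniformity over $K$ is the point of Step 1: the randomness enters only through $\mathcal E_m$, which does not depend on $a$.

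\textbf{Step 3: probability of the covering event.} Since $\rho_0$ is continuous and positive, it has a positive minimum $c_J$ on the compact set $\overline{B_{J+\eta}}$. Each $U_k$ therefore has mass at least $q:=c_J|U_k|>0$. A union bound gives
\[
\P(\mathcal E_m^c)\le N(1-q)^m\le \delta
\]
as soon as $m\ge m_0:=\log(N/\delta)/q$. The constant $m_0$ depends only on $(\delta,\varepsilon)$, through $J,\eta,N,q$, and on the choice of $r$.

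\textbf{Step 4: the ``consequently'' part.} On $\mathcal E_m$, condition (FS) holds with these $m$, $L$ and $r$. Proposition~\ref{prop:lagrangian_uat} combined with (EU) then gives Assumption~\ref{ass:operator_uat}, and Theorem~\ref{cor:conditional_expressivity} yields expressivity, all with probability at least $1-\delta$.

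One subtlety needs care. Expressivity for a target accuracy $\eta'$ requires (FS) only at a single resolution $\varepsilon$, the one consumed by the proof of Proposition~\ref{prop:lagrangian_uat}: the map $F=\Psi\circ(\text{sensor}^{-1})$ is continuous, but its continuity modulus is needed only at the level fixed by $\delta$ there. So the probability statement should be read for that fixed $\varepsilon$. If one instead wants (FS) for all $\varepsilon$ at once, the sensor map must be injective on $K$. Injectivity fails for any finite $m$ unless $K$ is finite-dimensional-like, so the single-$\varepsilon$ reading is the one to prove. I expect this bookkeeping, rather than the covering estimate itself, to be the main obstacle.
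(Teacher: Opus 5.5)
Your proof is correct and follows the paper's argument essentially step for step. Both restrict to the initial probe points, truncate the compact-open metric at a radius $J$, and use uniform equicontinuity of $K$ on $B_J$ to reduce (FS) to the event that the probes form an $h$-net of $B_J$. The failure probability is then bounded by a union bound over a finite cover, using the positive minimum of $\rho_0$ on a compact set. Your version is somewhat cleaner than the paper's, with an explicit $m_0=\log(N/\delta)/q$ and an explicit treatment of the ``consequently'' part. Only cosmetic fixes are needed: take equicontinuity on $\overline{B_{J+1}}$, since a probe in $U_k$ may lie outside $B_J$, and adjust the constants by a factor of two, since $p_J$ is a sum of two suprema rather than the supremum of a sum.
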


\begin{remark}
We remark that the estimate of $m$ in the proof of Theorem \ref{thm:randomFS} in Appendix \ref{app:express}
has curse of dimensionality. The proof in Appendix \ref{app:express} is based on the $h$-net and does not take advantage of the Monte Carlo feature of the method, so it should have overestimated $m$. The improvement needs a better topology for the coefficient functions that could make use of the Monte Carlo rate. This is left for future study.
\end{remark}

Next, let us discuss about universality condition and provide some evidence.
 Since \(m\) and \(L\) are fixed, the sensor set \(\{e_1,\ldots,e_m\}\subset\mathbb R^{d_e}\)
lives in the compact finite dimensional space \(\mathbb R^{m\times d_e}\), so (EU) is
an ordinary UAP for continuous functions on a compact set---the only nonstandard
requirement is permutation invariance in the sensor index. Three existing results
together make (EU) reasonable for the present architecture:
\begin{itemize}
\item \emph{Cross attention is a universal approximator.}
Liu et al.~\cite{liu_attention_uat_2025} prove that a single-head cross attention
layer, preceded by a linear projection, universally approximates any continuous
function on a compact domain in \(L^\infty\). The cross attention block
\(\mathrm{CA}(Z,E)\) of our encoder fits this setting directly: the learned
initial latent \(Z^{(0)}\) plays the role of the query and the sensor array \(E\)
provides the keys and values.
\item \emph{Self attention\,+\,FFN is a universal approximator of equivariant maps.}
Yun et al.~\cite{yun_transformers_2020} prove that a transformer (multihead
self attention followed by a positionwise FFN) universally approximates any
continuous permutation-equivariant sequence-to-sequence function on a compact domain.
The latent self attention and FFN blocks in each encoder layer are exactly this
subarchitecture, applied to the fixed length latent array of \(k\) tokens.
\item \emph{Permutation invariance is preserved by construction.}
Cross attention is invariant to permutation of the keys and values (the sensor
embeddings) because each attention weight is computed by an inner product between a
fixed query and each key independently before softmax normalization; permuting the
sensors permutes the weights but leaves the weighted sum unchanged. The subsequent
self attention operates on the latent array, which has no sensor index, and the
linear readout \(W^{\rm out}\operatorname{vec}(Z^{(L_{\rm enc})})\) is fixed dimension
and sensor-order-independent. Our encoder is structurally identical to the
Set Transformer of~\cite{lee_set_2019}, which was introduced precisely for
set aggregation that is invariant under permutations.
\end{itemize}
A formal proof combining all three stages in a single theorem statement does not yet
exist in the literature; we treat (EU) as an assumption, supported by the above
components. The decoder, given a fixed \(z\) and compact \(Y\), satisfies the
standard MLP UAP.


\section{Resolution Invariance}\label{sec:resoinvar}

This section is about the resolution of the input function. It explains when changing the sensor resolution leaves the realized sampler stable, in expectation over the probe
trajectories, for the fixed trained attention architecture.  A finite sensor
Lagrangian Perceiver--AdaLN approximation statement records sufficient
conditions for the compact domain approximation assumption once the probes
resolve the coefficient family.   The resolution invariance thus allows the model to be trained on data samples with variable resolution, and be applied for different resolution for new unseen cases.  For random probes, the probes are sampled in the
whole state space according to a prescribed probe law, so the resolution
parameter is the number \(m\) of probes.

In this subsection, \(K\subset\mathcal X_a\) denotes a compact
coefficient family.  Throughout this subsection the weights \(\theta\) are fixed and
suppressed from the notation.

For a fixed full space probe recipe, one probe produces a random observation
\(Y\) as introduced in Section~\ref{sec:perceiver}---the visited states
together with the drift \(b\) and diffusion \(\sigma\) recorded along a short
probe path---with CNN embedding \(e=\mathrm{CNN}(Y)\in\mathbb R^{d_e}\) and
law of the sensor embedding
\[
\nu_a =\operatorname{Law}(e).
\]
With \(m\) independent probes \(\mathbf e=(e_1,\ldots,e_m)\),
\(e_i\stackrel{\mathrm{i.i.d.}}{\sim}\nu_a\), the empirical law of the sensor embeddings is
\[
\hat\nu_{a,m}=\frac1m\sum_{i=1}^m\delta_{e_i}.
\]
Because the sensor order is irrelevant, we identify the encoded sensor set
with its empirical measure.

The encoder reads the sensors \emph{only} through cross attention, and only through
the projected sensor \(\tilde e=eP_e\in\mathbb R^{d_z}\), with \(P_e\) the shared
projection of Section~\ref{subsec:perceiver_encoder}. Replacing the sensor rows of
\(\mathrm{Atten}\) by a measure \(\nu\) on the embeddings, a head with query \(q\) is
\[
    A^h_\ell(q,\nu)
    =\frac{\int F^h_\ell(q,e)\,d\nu(e)}{\int \omega^h_\ell(q,e)\,d\nu(e)},
    \quad
    \omega^h_\ell(q,e)=\exp\!\Bigl(\frac{\langle q,\,\tilde eW^K_{\ell,h}\rangle}{\sqrt{d_h}}\Bigr),
    \quad
    F^h_\ell(q,e)=\omega^h_\ell(q,e)\,\tilde eW^V_{\ell,h}\in\mathbb R^{d_h},
\]
which at \(\nu=\hat\nu_{a,m}=\frac1m\sum_i\delta_{e_i}\) reduces to the
\(\mathrm{Atten}\) head over the \(m\) sensors.

We now use this measure formulation to write the full encoder recursion. We use \(H_\ell\) to denote the
cross attention message and \(G_\ell\) the self attention--MLP update. With token queries
\(q=z_rW^Q_{\ell,h}\), the per-head outputs assembled by \(W^O_\ell\) give
\(H_\ell(Z,\nu)\in\mathbb R^{k\times d_z}\), with rows
\begin{gather}
    \bigl[H_\ell(Z,\nu)\bigr]_r=\bigl[\,A^h_\ell(z_rW^Q_{\ell,h},\nu)\,\bigr]_{h=1}^{H}\,W^O_\ell,
    \quad r=1,\dots,k .
\end{gather}

Here, $G_{\ell}$ represents the transform made by the self attention and the MLP
\[
    G_\ell(Y)=\mathrm r[\mathrm{MLP}]\bigl(\mathrm r[\mathrm{SA}](Y)\bigr),
\]
where $\mathrm r[f](Y)=\mathrm{LN}\bigl(Y+f(Y)\bigr)$ is
a residual sublayer followed by LayerNorm.
The latents are then transformed by the self attention and the MLP through
  the depth-\(L_{\rm enc}\) recursion:
\begin{equation}\label{eq:Zrecursion}
    Z^{(\ell+1)}=G_\ell\bigl(\mathrm{LN}(Z^{(\ell)}+H_\ell(Z^{(\ell)},\nu))\bigr),\quad
    \ell=0,\dots,L_{\rm enc}-1,
\end{equation}
from the learned \(Z^{(0)}\).

Hence, the encoder is then a functional of the measure 
\[
\operatorname{Encoder}(\nu):=W^{\rm out}\operatorname{vec}\bigl(Z^{(L_{\rm enc})}\bigr).
\]
Denote 
\[
    z_{a,m}=\operatorname{Encoder}(\hat\nu_{a,m}),
    \qquad
    z_a=\operatorname{Encoder}(\nu_a)
\]
the encodings for the empirical embedded sensor law, and the population embedded sensor law respectively.

The finite and
infinite sensor velocity fields are the decoder pushforwards
\[
    v_{a,m}(x,t)=\operatorname{Decoder}(z_{a,m},x,t),
    \qquad
    v_a(x,t)=\operatorname{Decoder}(z_a,x,t).
\]
The corresponding terminal laws are the random measure \(\hat\mu_{a,m}\) (it
depends on the specific random realized probes \(\mathbf e\)) and the limit measure
\(\hat\mu_a\) corresponding to the continuum probe measure:
\[
    \hat\mu_{a,m}
    =
    \operatorname{Law}(X_1^{a,m}),
    \qquad
    \dot X_t^{a,m}
    =
    v_{a,m}(X_t^{a,m},t),
    \qquad
    X_0^{a,m}\sim\gamma,
\]
and
\[
    \hat\mu_a
    =
    \operatorname{Law}(X_1^{a}),
    \qquad
    \dot X_t^{a}
    =
    v_a(X_t^{a},t),
    \qquad
    X_0^{a}\sim\gamma .
\]

For each probe realization, these definitions give the following finite sensor map.

\[
a=(b,\sigma)
\longmapsto
\mathcal S_{m,L}(a)
\longmapsto
\hat\nu_{a,m}
\longmapsto
z_{a,m}
\longmapsto
v_{a,m}
\longmapsto
\hat\mu_{a,m}.
\]

\begin{definition}[Random sensor resolution invariance]
The Lagrangian Perceiver sampler is random sensor resolution invariant on
\(K\) if the population-probe terminal law
\(\hat\mu_a\) is defined for each \(a\in K\) and
\[
    \lim_{m\to\infty}
    \sup_{a\in K}
    \mathbb E_{\mathbf e}\,
    W_2(\hat\mu_{a,m},\hat\mu_a)=0 ,
\]
where the expectation is over the random probe trajectories. 
\end{definition}

\begin{remark}
The law
\(\hat\mu_a\) is the continuum limit only in the sense that
the empirical probe measure has been replaced by the underlying probe law in
the same finite depth Perceiver architecture; the probe recipe itself is fixed.
\end{remark}

We now state the two structural assumptions. Both are mild and realized by the architecture; the quantitative bounds the
proof uses---boundedness and Lipschitz continuity of the attention weights, a
strictly positive denominator, compact reachable latents, and a Lipschitz
decoder---are then \emph{consequences} (Lemma~\ref{lem:perceiver_consequences}),
not separate hypotheses.

\begin{assumption}[Compact sensor embeddings and a regular decoder]\label{ass:perceiver}
The following hold uniformly over \(a\in K\):
\begin{enumerate}
\item[(A1)] \textbf{Compact embedding support.} There is a fixed compact set
\(K_e\subset\mathbb R^{d_e}\) with \(\operatorname{supp}\nu_a\subseteq K_e\) for every
\(a\in K\). 

\item[(A2)] \textbf{Decoder state regularity.} There is \(C_\theta<\infty\), and for
every \(R<\infty\) an \(L^x_{\theta,R}<\infty\), such that for all \(a\in K\),
every reachable latent \(z\), all \(t\in[0,1]\) and \(x,y\in\overline B_R\),
\[
    |\operatorname{Decoder}(z,x,t)|\le C_\theta(1+|x|),
    \qquad
    |\operatorname{Decoder}(z,x,t)-\operatorname{Decoder}(z,y,t)|\le L^x_{\theta,R}|x-y| .
\]
\end{enumerate}
\end{assumption}

Let us make some discussions on the assumptions above. Regarding (A1), the per-sensor embedding of Section~\ref{sec:perceiver} pools the final
convolutional block, \(e_i=\frac{1}{L+1}\sum_{j=0}^{L}C^{(N)}_{j,:}\) with
\(C^{(N)}=\mathrm{GELU}(\mathrm{LayerNorm}(\cdots))\). The LayerNorm normalizes each
row to Euclidean norm \(\le\sqrt{d_e}\) and applies its fixed affine map, so its output
rows are bounded by some \(\rho_e<\infty\); since \(|\mathrm{GELU}(t)|\le|t|\), every
\(\|C^{(N)}_{j,:}\|\le\rho_e\), and averaging gives \(\|e_i\|\le\rho_e\). Hence
\(\operatorname{supp}\nu_a\subseteq K_e:=\overline B_{\rho_e}\), a fixed compact set
independent of the probe.

Regarding (A2),   in the decoder of Section~\ref{sec:perceiver}, \(x\) enters only through
\(h^{(0)}=W_{\mathrm{in}}[x;\tau]+b_{\mathrm{in}}\); the AdaLN blocks and the output
map \(v=W_v\mathrm{LN}(h^{(N_d)})\) compose the affine maps
\(W_{\mathrm{in}},W_1,W_2,W_v\), the activation \(\mathrm{Mish}\)
(\(L_{\mathrm{act}}:=\operatorname{Lip}(\mathrm{Mish})\)), and \(\varepsilon\)-LayerNorms
(\(L_{\mathrm{LN}}:=\operatorname{Lip}(\mathrm{LN})\)), with the scales
\(1+\gamma(c),\alpha(c)\) applied multiplicatively. Each block is thus Lipschitz in
its input with constant
\[
    1+\|\alpha(c)\|_\infty\,\|W_1\|\,\|W_2\|\,L_{\mathrm{act}}^2\,L_{\mathrm{LN}}^2\,\|1+\gamma(c)\|_\infty^2 ,
\]
so \(x\mapsto\operatorname{Decoder}(z,x,t)\) has Lipschitz constant at most the product
of \(\|W_{\mathrm{in}}\|\), these \(N_d\) block constants, and \(\|W_v\|L_{\mathrm{LN}}\).
As \(\gamma,\alpha\) are linear and \(c=W_c[z;\tau]+b_c\) ranges over the compact image
of \(K_z\times[0,1]\), the scales \(\|1+\gamma(c)\|_\infty,\|\alpha(c)\|_\infty\) are
bounded there, so that product is a finite \(\Lambda\) uniform over
\(K_z\times[0,1]\), giving (A2) with \(L^x_{\theta,R}=\Lambda\) and
\(C_\theta=\sup_{z\in K_z,\,t}|\operatorname{Decoder}(z,0,t)|+\Lambda\).

\begin{lemma}[Consequences of compactness]\label{lem:perceiver_consequences}
Under \textnormal{(A1)} and the fixed smooth encoder of
Section~\ref{subsec:perceiver_encoder}, whose blocks end in
\(\varepsilon\)-regularized LayerNorm. Then, the following hold.
\begin{enumerate}
\item[(i)] there are constants \(0<\omega_{\min}\le\omega_{\max}<\infty\) and $M_F>0$ such that
\[
\omega_{\min}\le\omega^h_\ell(q,e)\le\omega_{\max}
\]
 and
\(\|F^h_\ell(q,e)\|\le M_F\); consequently
\(\int\omega^h_\ell(q,\cdot)\,d\nu\ge\omega_{\min}\) for \emph{every} probability
measure \(\nu\) on \(K_e\), in particular for both \(\hat\nu_{a,m}\) and \(\nu_a\);

\item[(ii)] there is $L_{\omega}>0$ such that
\[
|\omega^h_\ell(q,e)-\omega^h_\ell(q',e)|
+\|F^h_\ell(q,e)-F^h_\ell(q',e)\|\le L_\omega\|q-q'\|;
\]

\item[(iii)] There are two compact sets \(K_z,K_q\) such that
every reachable latent array lies in \(K_z\) and every query in
\(K_q\); consequently, the self attention--MLP update
\(G_\ell\) is Lipschitz on the reachable compact set, with constants
\(L^G_\ell\);
\item[(iv)] for every \(R<\infty\) the decoder is Lipschitz in \(z\) on
\(K_z\times\overline B_R\times[0,1]\): there is \(L^{\rm dec}_{\theta,R}<\infty\) with
\[
|\operatorname{Decoder}(z,x,t)-\operatorname{Decoder}(z',x,t)|\le L^{\rm dec}_{\theta,R}\|z-z'\|.
\]
\end{enumerate}
\end{lemma}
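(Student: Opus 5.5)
The plan is a forward induction over the encoder depth $\ell=0,\dots,L_{\rm enc}$. At each layer we record three facts: the latent array stays in a bounded set, the queries are bounded, and the maps involved are Lipschitz on these sets. The basic ingredients are that projected sensors $\tilde e=eP_e$ stay bounded for $e\in K_e$, and that the $\varepsilon$-regularized LayerNorm is smooth with bounded range.

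First I would show that LayerNorm outputs are bounded independently of the input. For $u\in\mathbb R^{d_z}$, the normalized vector $(u-\bar u)/\sqrt{\mathrm{var}(u)+\varepsilon}$ has Euclidean norm at most $\sqrt{d_z}$. The learned affine map $(g,\beta)$ then bounds the output by $\rho:=\|g\|_\infty\sqrt{d_z}+\|\beta\|$. Each block of the recursion \eqref{eq:Zrecursion} ends in a LayerNorm applied rowwise, so for $\ell\ge1$ every row of $Z^{(\ell)}$ lies in $\overline B_\rho$. $Z^{(0)}$ is a fixed learned array. The intermediate arrays (the input to $G_\ell$, and the output of $\mathrm{r}[\mathrm{SA}]$) are also LayerNorm outputs, so they are bounded too. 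This holds whatever the measure $\nu$ is. Let $K_z$ be the product of these closed balls together with $\{Z^{(0)}\}$. The queries $q=z_rW^Q_{\ell,h}$ then lie in the compact set $K_q$ formed by the images of these balls under the finitely many matrices $W^Q_{\ell,h}$.

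With this in hand, (i) is immediate. On $K_q\times K_e$ we have $|\langle q,\tilde eW^K_{\ell,h}\rangle|/\sqrt{d_h}\le c$, so $\omega\in[e^{-c},e^{c}]$. Also $\|F\|\le e^{c}\sup_{K_e}\|\tilde e W^V\|=:M_F$. Integrating the lower bound against any probability measure on $K_e$ gives the denominator bound $\int\omega\,d\nu\ge\omega_{\min}$. For (ii), I would use the mean value theorem. The gradient of $q\mapsto\omega(q,e)$ is $\omega\,\tilde eW^K/\sqrt{d_h}$, which is bounded on the convex hull of $K_q$ (still compact), uniformly in $e\in K_e$. $F$ is $\omega$ times a $q$-independent bounded vector, so the same argument applies. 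Taking the maximum over the finitely many $(\ell,h)$ gives a single $L_\omega$.

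For (iii), $G_\ell$ is a finite composition of linear maps, rowwise softmax attention, GELU MLPs, residual additions, and $\varepsilon$-LayerNorms. All of these are $C^1$; in particular, $\varepsilon>0$ keeps the LayerNorm denominator at least $\sqrt\varepsilon$. A $C^1$ map is Lipschitz on any compact convex set. So $G_\ell$ is Lipschitz on the convex hull of the bounded set where its inputs live, with constant $L^G_\ell$.

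For (iv), the decoder is also a $C^1$ composition in $(z,x,t)$: linear maps, Mish, $\varepsilon$-LayerNorm, and multiplication by the linear functions $\gamma(c),\alpha(c)$ of $c=W_c[z;\tau]+b_c$. Its $z$-gradient is therefore continuous on the compact set $\mathrm{conv}(K_z)\times\overline B_R\times[0,1]$. Bounding this gradient gives $L^{\rm dec}_{\theta,R}$.

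The main obstacle is to make sure the reachable sets are genuinely independent of $\nu$ and of $a$. This is what makes all the constants uniform over $K$ and over $m$, both empirical and population measures. The design handles it because every block ends in LayerNorm, whose output is bounded regardless of input. I would stress that this step does not even need (A1), which enters only through $K_e$ in (i)–(ii). A second delicate point is that the Lipschitz arguments need convexity, which is why I pass to convex hulls of the compact sets; these hulls remain compact.
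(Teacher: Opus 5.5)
Your proposal is correct and follows essentially the same route as the paper: the input-independent bound on $\varepsilon$-LayerNorm outputs gives the compact reachable sets $K_z,K_q$. Positivity and continuity of $\omega^h_\ell,F^h_\ell$ on the compact set $K_q\times K_e$ then give (i)--(ii), and $C^1$ regularity on compact sets gives the Lipschitz constants in (iii)--(iv). Passing to convex hulls before applying the mean value inequality is a small but useful refinement of a step the paper leaves implicit, since its $K_q$ is a union over $(\ell,h)$ of linear images and need not be convex.
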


These bounds yield the following uniform law of large numbers (ULLN).
Both proofs are in Appendix~\ref{app:randomresolution}.

\begin{lemma}[ULLN for the attention class]\label{lem:attention_ulln}
Over the finitely many layers, heads, and value coordinates, set
\[
    \mathfrak F_\theta
    =\bigcup_{\ell,h}\Bigl(\{\omega^h_\ell(q,\cdot):q\in K_q\}
    \cup\{[F^h_\ell(q,\cdot)]_r:q\in K_q,\ r=1,\dots,d_h\}\Bigr),
\]
a uniformly bounded family of functions on \(K_e\), and
\[
    \delta_m^{\rm att}
    :=\sup_{a\in K}\mathbb E_{\mathbf e}
      \sup_{f\in\mathfrak F_\theta}
      \Bigl|\int f\,d\hat\nu_{a,m}-\int f\,d\nu_a\Bigr| .
\]
Under Assumption~\ref{ass:perceiver}, with \(\mathbf e=(e_1,\dots,e_m)\)
i.i.d.\ from \(\nu_a\) and \(\operatorname{supp}\nu_a\subseteq K_e\),
\(\delta_m^{\rm att}\to0\).
\end{lemma}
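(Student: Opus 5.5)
The plan is a symmetrization argument combined with a covering bound on the index set $K_q$, where the Lipschitz estimate of Lemma~\ref{lem:perceiver_consequences}(ii) is the tool that transfers control from a finite net to the whole class. The family $\mathfrak F_\theta$ is a finite union over layers, heads and coordinates. It therefore suffices to handle one parametrized class $\{f_q:=g(q,\cdot):q\in K_q\}$, where $g$ is either $\omega^h_\ell$ or a coordinate of $F^h_\ell$. The supremum over the union is at most the sum of the suprema over the pieces, and there are finitely many pieces.

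Fix such a class. By Lemma~\ref{lem:perceiver_consequences}(i) every $f_q$ is bounded on $K_e$ by $B:=\max\{\omega_{\max},M_F\}$, with $B$ independent of $a$. By Lemma~\ref{lem:perceiver_consequences}(ii), $\sup_{e\in K_e}|f_q(e)-f_{q'}(e)|\le L_\omega\|q-q'\|$ for all $q,q'\in K_q$, where $K_q$ is the compact set from Lemma~\ref{lem:perceiver_consequences}(iii).

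Given $\epsilon>0$, pick an $\epsilon/(3L_\omega)$-net $\{q_1,\dots,q_N\}$ of $K_q$. Its size $N=N(\epsilon)$ is finite by compactness and depends neither on $a$ nor on $m$. For any $q$ with nearest net point $q_j$, the triangle inequality gives
\[
\Bigl|\int f_q\,d(\hat\nu_{a,m}-\nu_a)\Bigr|\le \tfrac{2\epsilon}{3}+\Bigl|\int f_{q_j}\,d(\hat\nu_{a,m}-\nu_a)\Bigr| .
\]
Each net term is an average of $m$ i.i.d.\ centered variables bounded by $2B$, so its second moment is at most $4B^2/m$. Bounding the maximum over the net by the sum of squares and applying Jensen's inequality yields
\[
\mathbb E_{\mathbf e}\max_j\Bigl|\int f_{q_j}\,d(\hat\nu_{a,m}-\nu_a)\Bigr|\le 2B\sqrt{N/m}.
\]
A Hoeffding-plus-union bound would sharpen this to order $B\sqrt{\log N/m}$, but the cruder bound is enough. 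The right-hand side holds uniformly in $a\in K$, because the only inputs are the bound $B$ and the fact that the $e_i$ are i.i.d.\ with support in $K_e$ (Assumption~\ref{ass:perceiver}(A1)).

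Taking $\sup_{a\in K}$ gives $\delta_m^{\rm att}\le C\bigl(\tfrac{2\epsilon}{3}+2B\sqrt{N(\epsilon)/m}\bigr)$, where $C$ is the number of pieces in the union. Letting $m\to\infty$ and then $\epsilon\to0$ shows $\delta_m^{\rm att}\to0$.

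The main obstacle is uniformity in $a$. The proof must not use any property of $\nu_a$ beyond its support lying in $K_e$, and the net must be built on the query set $K_q$, not on function values depending on $a$. This relies on the fact that $K_q$ is a fixed compact set independent of $a$, as guaranteed by Lemma~\ref{lem:perceiver_consequences}(iii) for all reachable queries. One should also check that the measurability of $\sup_q$ is harmless: $f_q$ is continuous in $q$, so the supremum can be taken over a countable dense subset of $K_q$.
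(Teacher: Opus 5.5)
Your proposal is correct and follows essentially the paper's argument. Both use total boundedness of $\mathfrak F_\theta$, obtained from the Lipschitz dependence on $q$ in the compact set $K_q$ (Lemma~\ref{lem:perceiver_consequences}(ii)), a finite net, a variance bound of order $m^{-1/2}$ for each net element that is uniform in $a$, and then $m\to\infty$ followed by $\varepsilon\to0$. The differences are cosmetic: you build the net in $K_q$ rather than directly in $\mathfrak F_\theta$, your sum-of-squares/Jensen step gives $\sqrt{N/m}$ instead of the paper's $N m^{-1/2}$, and, despite your opening sentence, no symmetrization is used or needed.
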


\begin{remark}
The limit \(\delta_m^{\rm att}\to0\) needs only total boundedness of
\(\mathfrak F_\theta\). A rate is available but is not used below: by
Lemma~\ref{lem:perceiver_consequences}(ii) the map \(q\mapsto f_q\) is Lipschitz from
the compact \(K_q\) into \((C(K_e),\|\cdot\|_\infty)\), so \(\mathfrak F_\theta\) has
finite metric entropy
\(\log N(\varepsilon,\mathfrak F_\theta,\|\cdot\|_\infty)\lesssim\dim(q)\,\log(1/\varepsilon)\),
and the standard Dudley/symmetrization bound then gives
\(\delta_m^{\rm att}\le C_\theta m^{-1/2}\) with \(C_\theta\) independent of \(a\)
(the class and its envelope \(M_\theta=\sup_{f\in\mathfrak F_\theta}\|f\|_\infty\) are
distribution independent).
\end{remark}

Finally, we obtain the following resolution invariance result for the random sensor attention sampler.
The proof is given in Appendix \ref{app:randomresolution}.
\begin{theorem}\label{prop:random_sensor_di}
Under Assumption~\ref{ass:perceiver}, the population-probe terminal law \(\hat\mu_a\)
is defined for each \(a\in K\), and
\[
\lim_{m\to\infty}\sup_{a\in K}
    \mathbb E_{\mathbf e}\,  W_2(\hat\mu_{a,m},\hat\mu_a)=0.
\]
so the Lagrangian Perceiver sampler is random sensor resolution invariant on
\(K\) \emph{in expectation over the probes}. 
\end{theorem}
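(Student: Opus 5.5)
The plan is to chain three stability estimates: sensor law $\to$ latent $z$, latent $z\to$ velocity field, and velocity field $\to$ terminal law.

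\textbf{Latent stability.} First we show that the encoder is Lipschitz in the sensor measure with respect to the seminorm
\[
\|\nu-\nu'\|_{\mathfrak F}:=\sup_{f\in\mathfrak F_\theta}\Bigl|\int f\,d\nu-\int f\,d\nu'\Bigr|,
\]
uniformly over reachable latents. For a single head, write $A^h_\ell(q,\nu)=N(q,\nu)/D(q,\nu)$. Lemma~\ref{lem:perceiver_consequences}(i) gives $D\ge\omega_{\min}$ and $\|N\|\le M_F$ for every probability measure on $K_e$. Therefore
\[
\|A(q,\nu)-A(q',\nu')\|\le \frac{\|N(q,\nu)-N(q',\nu')\|}{\omega_{\min}}+\frac{M_F\,|D(q,\nu)-D(q',\nu')|}{\omega_{\min}^2}.
\]
Each difference splits into a $q$-part, bounded by $L_\omega\|q-q'\|$ via (ii), and a $\nu$-part, bounded by $\|\nu-\nu'\|_{\mathfrak F}$ (times $\sqrt{d_h}$ for the vector part) since $q\in K_q$. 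Composing with the linear maps $W^Q,W^O$, the $\varepsilon$-LayerNorm (globally Lipschitz), and $G_\ell$ (Lipschitz on the reachable compact set by (iii)) gives a one-step recursion
\[
\|Z^{(\ell+1)}-Z'^{(\ell+1)}\|\le C_\ell\bigl(\|Z^{(\ell)}-Z'^{(\ell)}\|+\|\nu-\nu'\|_{\mathfrak F}\bigr),
\]
with $Z^{(0)}=Z'^{(0)}$. Iterating over the finitely many layers yields $\|z_{a,m}-z_a\|\le C_{\rm enc}\|\hat\nu_{a,m}-\nu_a\|_{\mathfrak F}$, where $C_{\rm enc}$ does not depend on $a$. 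Both $\hat\nu_{a,m}$ and $\nu_a$ are supported in $K_e$, so all latents involved are reachable and lie in $K_z$.

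\textbf{Well-posedness of $\hat\mu_a$ and trajectory comparison.} Next we show that $\hat\mu_a$ is defined. By (A2), $v_a$ is locally Lipschitz in $x$ and has linear growth $C_\theta(1+|x|)$, so the ODE is globally well posed. Grönwall gives
\[
|X_t|\le (|X_0|+C_\theta)e^{C_\theta},
\]
uniformly in $a$, $m$ and the probe realization, and hence a uniform second (indeed any) moment bound under $\gamma$.

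To compare terminal laws, couple $X^{a,m}$ and $X^a$ through the same $X_0$. This gives
\[
W_2(\hat\mu_{a,m},\hat\mu_a)^2\le\mathbb E_{X_0}|X_1^{a,m}-X_1^a|^2.
\]
The main obstacle is that the Lipschitz constants in (A2) and Lemma~\ref{lem:perceiver_consequences}(iv) are only local, on $\overline B_R$. I would handle this by truncation. Fix $R$ and let $\Omega_R=\{(|X_0|+C_\theta)e^{C_\theta}\le R\}$. On $\Omega_R$ both trajectories remain in $\overline B_R$ for all $t\in[0,1]$, so
\[
\frac{d}{dt}|X^{a,m}_t-X^a_t|\le L^x_{\theta,R}|X^{a,m}_t-X^a_t|+L^{\rm dec}_{\theta,R}\|z_{a,m}-z_a\|.
\]
Grönwall then gives $|X_1^{a,m}-X_1^a|\le e^{L^x_{\theta,R}}L^{\rm dec}_{\theta,R}\|z_{a,m}-z_a\|$ on $\Omega_R$. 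On $\Omega_R^c$, the growth bound gives $|X_1^{a,m}-X_1^a|\le 2(|X_0|+C_\theta)e^{C_\theta}$. Its squared contribution is at most $\varepsilon_R:=\mathbb E_\gamma[4(|X_0|+C_\theta)^2e^{2C_\theta}\mathbf 1_{\Omega_R^c}]$, which tends to $0$ as $R\to\infty$ by Gaussian tails, uniformly in $a$ and $\mathbf e$.

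\textbf{Conclusion.} Combining the two cases,
\[
W_2(\hat\mu_{a,m},\hat\mu_a)\le e^{L^x_{\theta,R}}L^{\rm dec}_{\theta,R}\,C_{\rm enc}\,\|\hat\nu_{a,m}-\nu_a\|_{\mathfrak F}+\sqrt{\varepsilon_R}.
\]
Taking $\mathbb E_{\mathbf e}$ and $\sup_{a\in K}$ bounds the right-hand side by $C_R\,\delta_m^{\rm att}+\sqrt{\varepsilon_R}$. Now send $m\to\infty$, using Lemma~\ref{lem:attention_ulln}, and then $R\to\infty$. This yields the stated limit.
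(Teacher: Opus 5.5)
Your proposal is correct and follows the paper's proof. It uses the same ingredients: the attention ratio bound with $\omega_{\min}$ and $M_F$ together with the ULLN of Lemma~\ref{lem:attention_ulln} for the encoder, the $z$-Lipschitz bound of Lemma~\ref{lem:perceiver_consequences}(iv) for the decoder, and a Gr\"onwall comparison localized to $\overline B_R$ plus a tail term, sending $m\to\infty$ and then $R\to\infty$. The only differences are minor refinements: you derive the encoder estimate pathwise in $\mathbf e$ before taking expectations, and you control escape from $\overline B_R$ through the deterministic bound $|X_t|\le(|X_0|+C_\theta)e^{C_\theta}$, an event depending only on $X_0$, in place of the paper's stopping time, path moment bounds and H\"older step.
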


We would like to point out that the limit \(\hat\mu_a\) is the
population-probe law of the same Perceiver with fixed weights, not the target invariant
measure \(\mu_a\).


\section{Numerical Experiments}\label{sec:experiments}


We perform numerical tests to validate our framework. The necessary setup and results needed to support our conclusions are given here while leave other details (data generation, architecture, training, software etc) are in
Appendix~\ref{app:expdetails}.

To compare generated and reference samples, we use the Sinkhorn divergence, a bias-corrected, entropy-regularized approximation to quadratic optimal transport~\cite{feydy_sinkhorn_2019}. For their empirical probability
measures $\mu$ and $\nu$, define
\begin{align}
\mathrm{OT}_{\varepsilon}(\mu,\nu)
&= \inf_{\pi\in\Pi(\mu,\nu)}
\left\{\frac12\int |x-y|^2\,d\pi(x,y)
+\varepsilon\,\mathrm{KL}(\pi\,\|\,\mu\otimes\nu)\right\},\nonumber\\
S_{\varepsilon}(\mu,\nu)
&= \mathrm{OT}_{\varepsilon}(\mu,\nu)
-\frac12\mathrm{OT}_{\varepsilon}(\mu,\mu)
-\frac12\mathrm{OT}_{\varepsilon}(\nu,\nu).
\label{eq:sinkhorn_divergence}
\end{align}
Here $\mathrm{KL}$ denotes relative entropy, and $\varepsilon>0$ controls
the amount of smoothing. $S_\epsilon$ is the Sinkhorn divergence. The self-comparison terms remove the entropic
bias and give $S_{\varepsilon}(\mu,\mu)=0$. 
$S_{\varepsilon}(\mu,\nu)$ converges to
$\tfrac12 W_2^2(\mu,\nu)$ as $\varepsilon\downarrow0$.
We set $\varepsilon=0.0025$ in the experiments, and report $S_{\varepsilon}$ without a square root, so the Sinkhorn and $W_2$ columns have different scales. Lower values indicate closer agreement. Sinkhorn divergence is easier to compute and more stable then $W_2$ in high-dimensional spaces.

\subsection{\expVariableNoise{} and \expRareEvent}\label{sec:oned_experiments}

We first test the amortized neural sampler in \expVariableNoise{} and
\expRareEvent, using the
simple grid Multi-input DeepONet described at the beginning of
Section~\ref{sec:perceiver}. These experiments show that operator learning of
invariant measures is feasible, and locate the regime---slow mixing---where
it gains an advantage over MCMC. Both experiments are trained on
65{,}536 function instances and evaluated on 1{,}024 test functions,
reporting the Sinkhorn divergence and the 2-Wasserstein distance ($W_2$)
against ground truth; the evaluation protocol and the shared architecture and
training configuration are given in Appendix~\ref{app:oned_details}
(Table~\ref{tab:1d_params}).

\subsubsection{\expVariableNoise}\label{sec:variablenoise}

This \expVariableNoise{} example serves as a sanity check in a benign, fast mixing regime. MCMC
performs well here. The experiment verifies that the amortized operator sampler
produces reasonable samples across a large family of SDEs and establishes a
baseline for comparison with the harder rare event setting that follows.
We set $b = -\nabla w + s$ and $\sigma = \max\{0.25,\, 1+s'\}$, where
$w(x) = {(\max\{|x|-2,0\})}^2$ is a fixed confining well and the lower bound on
$\sigma$ ensures uniform ellipticity.  The perturbations $s,s'$ (functions of $x$) are independent
draws from a Gaussian random field with squared exponential kernel (length
scale~$1$, variance~$1$).

\begin{table}[htbp]
\centering
\caption{\expVariableNoise. Distribution metrics on 1{,}024 test functions
  (mean\;/\;median).}
\label{tab:1d_vn}
\begin{tabular}{lccc}
\toprule
 & Sinkhorn & $W_2$ & Time (s) \\
\midrule
\textbf{Ours} & $0.018\;/\;0.002$ & $0.204\;/\;0.125$ & $19.0$ \\
MCMC & $\mathbf{0.001}\;/\;\mathbf{0.0002}$ & $\mathbf{0.083}\;/\;\mathbf{0.040}$ & $\mathbf{8.6}$ \\
\bottomrule
\end{tabular}
\end{table}

Table~\ref{tab:1d_vn} reports the results, together with an MCMC baseline run
on the same test functions under the same per-function sample budget. In this easy mixing regime, MCMC produces accurate samples efficiently,
and our model is less accurate though comparable in speed.  The
representative samples in Figure~\ref{fig:1d_vn_samples} show that it
nevertheless reproduces the target laws across the family, although it
systematically underestimates the variance of the invariant measure and the
Kolmogorov--Smirnov test rejects for most test functions.  This gap is
expected: an
amortized model that generalizes across the entire function family in a
single forward pass trades per-distribution accuracy for breadth.  

\begin{figure}[htbp]
\centering
\includegraphics[width=\textwidth]{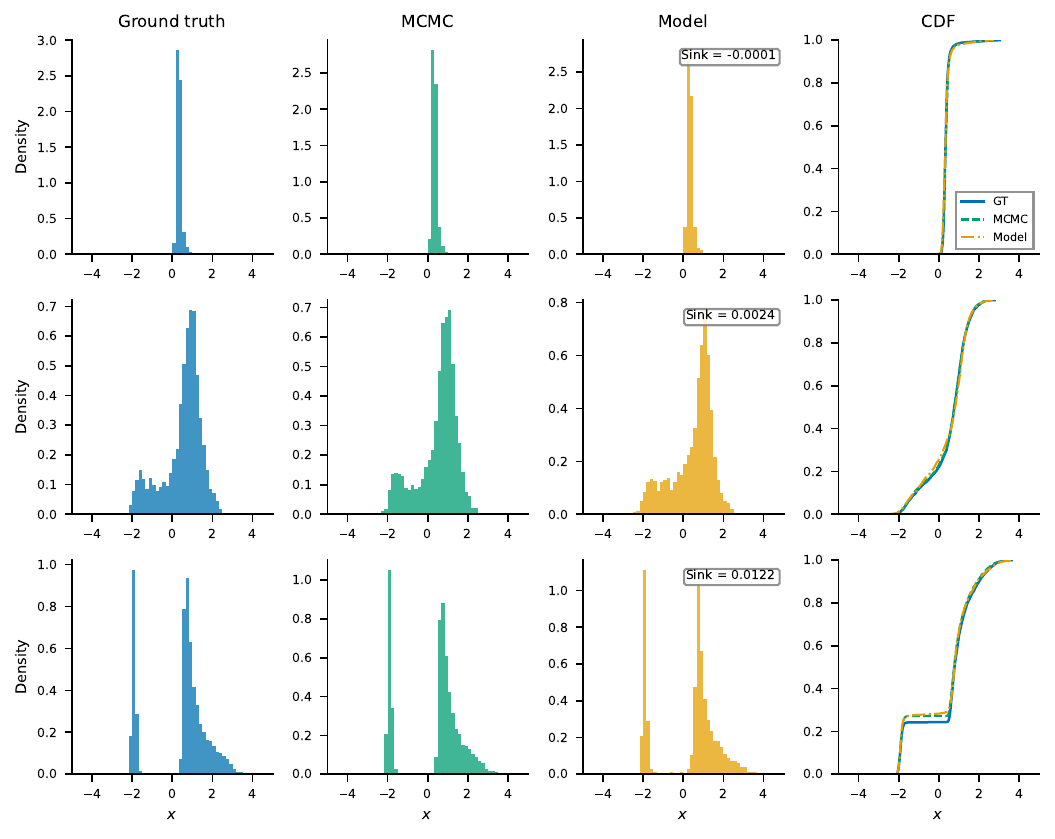}
\caption{\expVariableNoise. Representative samples on test functions at the
    25th, 50th, and 75th percentile Sinkhorn divergence. Each row shows one test
    function: reference histogram, MCMC histogram, model histogram,
    and CDF overlay.\label{fig:1d_vn_samples}}
\end{figure}

\subsubsection{\expRareEvent}\label{sec:rareevent}

\expRareEvent{} targets the regime where amortized operator sampling provides
its primary advantage: SDEs with small diffusion coefficients whose invariant
measures are multimodal. MCMC methods must traverse high free energy barriers
between modes, leading to mixing times that scale exponentially in~$1/\sigma^2$.
Our neural sampler bypasses this bottleneck entirely. Its inference cost is
independent of the mixing time.

We consider 1D SDEs whose invariant measure is a prescribed mixture of three
Gaussians:
\begin{equation}
p(x) = \sum_{i=1}^3 w_i\,\mathcal{N}(x\mid\mu_i,\sigma_i^2),
\end{equation}
where $w_i>0$, $\sum_i w_i=1$, $\mu_i\sim\mathrm{Unif}(-3,3)$, and
$\sigma_i\sim\mathrm{Unif}(0.1,0.5)$.  The diffusion coefficient is a small
random perturbation of the constant $0.1$, and the drift is constructed from
the zero flux condition of the stationary Fokker--Planck equation, so that
$p$ is by design the invariant density of the resulting SDE; the explicit
construction is given in Appendix~\ref{app:oned_details}.  Because the target
density is known, training samples are drawn directly from the GMM, making
the experiment a controlled test of the model's representational capacity and
generalization in a rare event regime.  The model reuses the architecture and
hyperparameters of \expVariableNoise.

Table~\ref{tab:1d_rare} compares the model with MCMC baselines at varying
time budgets, including a \emph{converged} baseline and three baselines that limit
the per-function MCMC budget.  The model attains a lower
mean Sinkhorn divergence than even the converged MCMC baseline while using
$7\times$ less elapsed time, and its median is comparable to the converged
MCMC median with far fewer large failures: the error distributions are
heavily skewed to the right (Figure~\ref{fig:1d_rare_box} in
Appendix~\ref{app:oned_details}).  The gap in means is driven by MCMC's
tendency to become trapped in a single mode when noise is small, occasionally
producing accurate samples (when all modes happen to be found) but often
yielding very poor ones; representative failure cases are shown in
Figure~\ref{fig:1d_rare_samples}.  Figure~\ref{fig:1d_rare_compute} shows the
Sinkhorn divergence as a function of elapsed time; our method (star)
dominates the MCMC Pareto frontier.

\begin{table}[htbp]
\centering
\caption{\expRareEvent. Sinkhorn divergence and $W_2$ on 1{,}024 test functions
  (mean\;/\;median).}
\label{tab:1d_rare}
\begin{tabular}{lccr}
\toprule
Method & Sinkhorn & $W_2$ & Time (s) \\
\midrule
\textbf{Ours} & $\mathbf{0.131}\;/\;0.011$ & $\mathbf{0.447}\;/\;0.239$ & $\mathbf{20.6}$ \\
MCMC ($6$\,s) & $0.348\;/\;0.039$ & $0.708\;/\;0.474$ & $6.3$ \\
MCMC ($12$\,s) & $0.326\;/\;0.019$ & $0.659\;/\;0.381$ & $12.4$ \\
MCMC ($25$\,s) & $0.299\;/\;0.009$ & $0.614\;/\;0.281$ & $24.6$ \\
MCMC (conv.) & $0.254\;/\;0.003$ & $0.535\;/\;0.161$ & $136.9$ \\
\bottomrule
\end{tabular}
\end{table}

\begin{figure}[htbp]
\centering
\includegraphics[width=\textwidth]{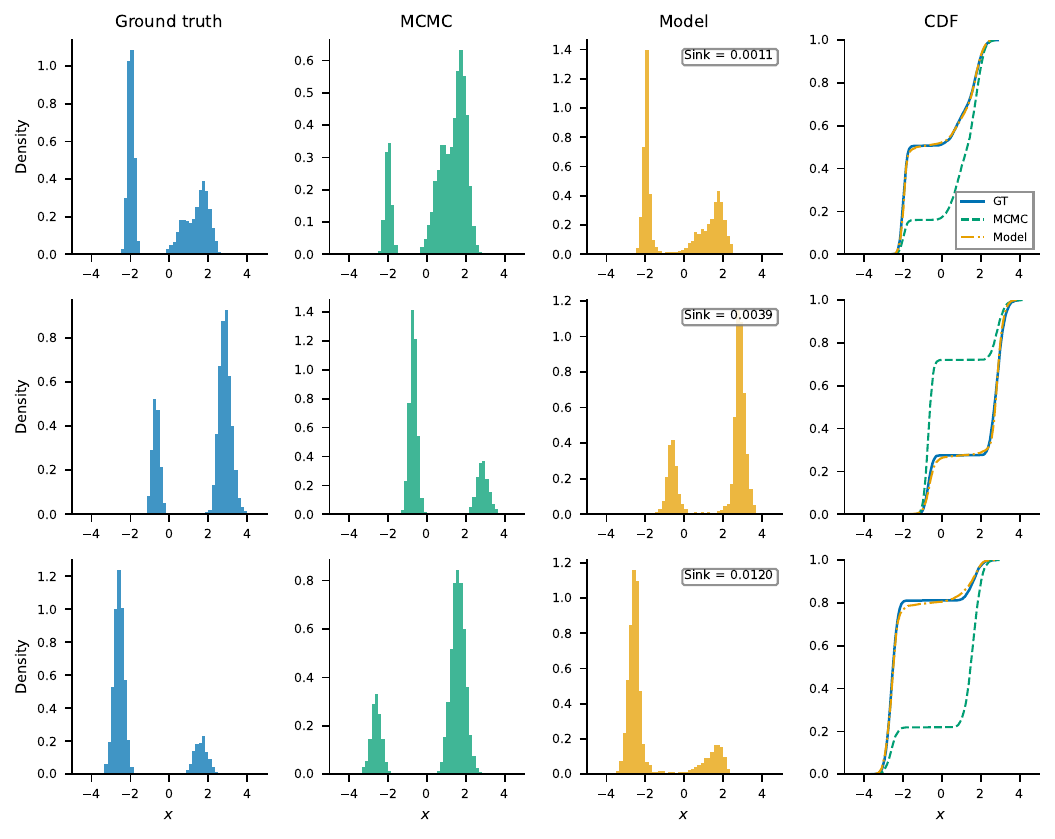}
\caption{\expRareEvent. Representative samples on test functions chosen to
    illustrate MCMC failures caused by mode trapping. Each row shows one test
    function: reference histogram, MCMC histogram, model histogram,
    and CDF overlay.\label{fig:1d_rare_samples}}
\end{figure}

\begin{figure}[htbp]
\centering
\includegraphics[width=0.8\textwidth]{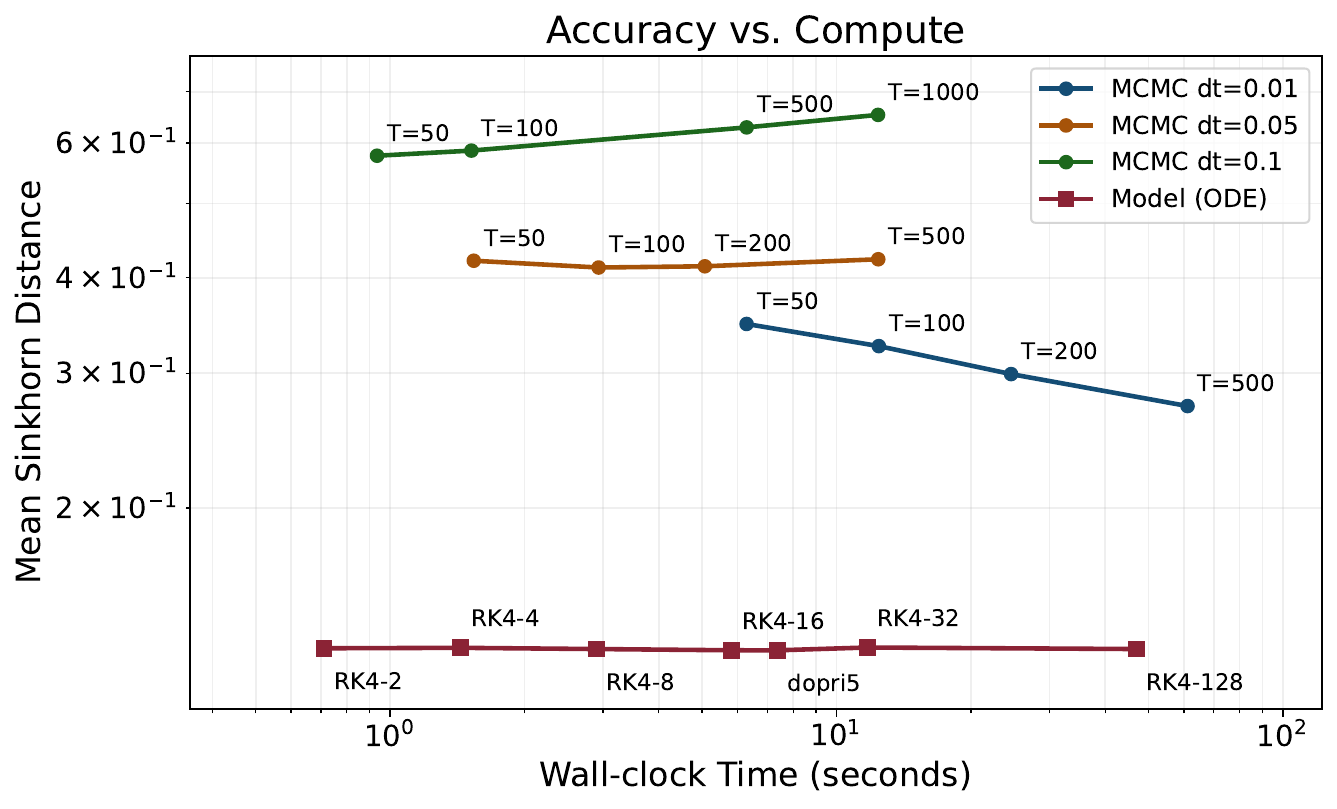}
\caption{\expRareEvent. Mean Sinkhorn divergence vs.\ elapsed time.
    Star: operator model. Circles: MCMC at increasing time
  budgets.\label{fig:1d_rare_compute}}
\end{figure}

\subsection{\expConstantNoise}\label{subsec:experiment}

This experiment compares grid DeepONet and Lagrangian sensor models for sampling invariant measures in 2D.

We consider amortized sampling from the invariant measures of the 2D SDE
\begin{equation}
dX = b(X)\,dt + \sqrt{2}\,dW, \qquad X\in\mathbb{R}^2,
\end{equation}
with constant isotropic diffusion. The drift is generated as
$b(x)=b_{\mathrm{GRF}}(x)+b_{\mathrm{well}}(x)$, where $b_{\mathrm{GRF}}$ is a Gaussian random field sampled on a $32\times 32$ grid over $[-5,5]^2$ (RBF kernel with length scale sampled from $\mathrm{Unif}(0.3,2.0)$), and $b_{\mathrm{well}}$ is a radial confining well that pushes particles back toward the origin outside radius~2 with strength~10.

Table~\ref{tab:d2_sinkhorn} compares three parts of the sampler.
The \emph{representation} determines which drift observations the encoder receives.
Grid DeepONet~\cite{jin_mionet_2022} reads the full drift grid.
Point probes retain the first observation of each trajectory.
Endpoint probes retain the first and last observations and their difference.
These two encoders use MLPs, while the trajectory encoder applies a CNN to all observations.

The \emph{aggregator} combines sensor embeddings into the context $z$.
MeanPool averages the embeddings before an MLP.
DeepSets~\cite{zaheer_deep_2017} applies a shared MLP to each embedding, sums the outputs, and applies another MLP.
Perceiver~\cite{jaegle_perceiver_2021} uses cross attention between learned latent tokens and sensor embeddings, as described in Section~\ref{subsec:perceiver_encoder}.

\emph{Conditioning} determines how the decoder uses the context.
FiLM (feature-wise linear modulation)~\cite{perez_film_2018} uses the context to predict a scale and shift for each hidden feature.
Our FiLM and AdaLN (adaptive layer normalization)~\cite{peebles_scalable_2023} variants both apply these changes after layer normalization.
Our AdaLN variant also uses the context to scale each residual update.
Additional variants and training details are in Appendix~\ref{app:twod_details}.

\begin{table}[t]
\centering
\small
\caption{\expConstantNoise. Mean Sinkhorn divergence on 100 test drift fields,
using 4{,}096 samples per distribution per field, after training on 65{,}536 fields.
Probe models use 256 sensors, with trajectory + Perceiver + AdaLN as the shared baseline.
Time: total training time under shared optimization settings.}
\label{tab:d2_sinkhorn}
\begin{tabular}{llcc}
\toprule
Component & Variant & Mean Sinkhorn & Train (s) \\
\midrule
Representation & Grid DeepONet & 0.0209 & 40.3 \\
& Point + Perceiver + AdaLN & 0.0415 & 87.8 \\
& Endpoint + Perceiver + AdaLN & 0.0324 & 82.8 \\
& Trajectory + Perceiver + AdaLN & 0.0180 & 252.0 \\
\midrule
Aggregator & Trajectory + MeanPool + AdaLN & 0.0370 & 226.2 \\
& Trajectory + DeepSets + AdaLN & 0.0246 & 238.4 \\
\midrule
Conditioning & Trajectory + Perceiver + FiLM & \textbf{0.0169} & 258.9 \\
\bottomrule
\end{tabular}
\end{table}

\begin{table}[t]
\centering
\small
\caption{\expConstantNoise. Sensor resolution transfer for the trajectory + Perceiver + AdaLN
backbone after training on 65{,}536 functions. Rows denote the
training sensor regime and columns denote the evaluation sensor count. The
mixed row alternates homogeneous batches from the $512$-, $1024$-, and
$2048$-sensor trajectory views. Each probe executes a 20-step random walk with
step size $0.05$. Lower Sinkhorn is better.}
\label{tab:d2_transfer}
\begin{tabular}{lccc}
\toprule
Train regime & Eval 512 & Eval 1024 & Eval 2048 \\
\midrule
Fixed 512 & 0.0284 & \textbf{0.0172} & 0.0131 \\
Fixed 1024 & 0.0287 & 0.0175 & \textbf{0.0125} \\
Fixed 2048 & 0.0312 & 0.0184 & 0.0131 \\
Mixed 512/1024/2048 & \textbf{0.0271} & 0.0177 & \textbf{0.0125} \\
\bottomrule
\end{tabular}
\end{table}

It turns out that the trajectory + Perceiver + AdaLN architecture is a practical choice with competitive accuracy. In the meanwhile, this architecure supports fixed or mixed counts in training and different counts at inference without retraining.

\subsection{\expInteractingParticle}\label{subsec:particles}

\expInteractingParticle{} is a proof of concept for sampling that depends on input in an
SDE in high dimensions that does not use a grid.  The central question is whether the learned
sampler uses observations of an unknown pair interaction law to generate
physically structured samples in a 64-dimensional particle system, beyond
matching the one body scale imposed by the confining potential.

We evaluate the framework on amortized sampling for $N=32$ interacting
particles in~2D ($X\in\mathbb{R}^{64}$) under overdamped Langevin dynamics
\begin{equation}\label{eq:langevin_d3}
dX = -\nabla V(X)\,dt + \sqrt{2\beta^{-1}}\,dW,
\end{equation}
targeting the Boltzmann distribution
$\pi(X)\propto\exp(-\beta V(X))$. The potential is
\begin{equation}\label{eq:potential_d4}
V(X) = \frac{\kappa}{2}\sum_{i=1}^{N}|x_i|^2 + \sum_{i<j}U_{\mathrm{int}}(|x_i-x_j|),
\end{equation}
with a fixed harmonic trap ($\kappa=0.5$) and inverse temperature
$\beta=2.0$ ($T=0.5$). Across problem instances, the interaction kernel
$U_{\mathrm{int}}$ varies; the one body term remains fixed.

We draw $U_{\mathrm{int}}$ from four qualitatively distinct kernel
families---Morse (bondlike attraction with a repulsive core), Gaussian (soft
repulsion or attraction), double Gaussian (short range repulsion with
long range attraction, creating a preferred interparticle spacing), and WCA
(hard sphere exclusion).  All kernels are nonsingular; their functional
forms and the energy capping convention are given in
Table~\ref{tab:d4_kernels} in Appendix~\ref{app:particle_details}.

Because $U_{\mathrm{int}}$ acts on pairwise distances, full system probes mix
many interactions at once.  The sensor design therefore uses \textbf{pair probe
sensors}: isolated trajectories of particle pairs that record the relative
displacement and interaction force along short rollouts.  The resulting
observation tensor is a fingerprint of the interaction kernel that does not
require an Eulerian grid in $\mathbb{R}^{64}$.  The model combines a 1D CNN
sensor encoder, a Perceiver aggregator with four latent tokens, and an
equivariant particle flow network conditioned by AdaLN; it has 1.78M
parameters and is trained by conditional flow matching on
2{,}048 kernels, with metrics reported on 100 test kernels (sensor,
data generation, and training details in Appendix~\ref{app:particle_details}).

The state $X=(x_1,\dots,x_{32})\in\mathbb{R}^{64}$ is invariant under
permutations of the indistinguishable particles.  We therefore evaluate the
generated distributions using physical summaries invariant under particle permutations:
pair correlation functions, energy distributions, center of mass and
radius of gyration statistics, pair distance summaries, and nearest neighbor
summaries.  Raw coordinate space Sinkhorn divergence is reported only as a
secondary diagnostic.  In 64 dimensions it can be misleading because the fixed
harmonic trap makes a pooled Gaussian baseline competitive in coordinate cost
even though it contains no interaction structure.

Table \ref{tab:d4_ablation} presents the quantitative evidence that our model has real prediction power over null. When the input is the correct sensors, the generated samples' distribution is far more closer to the ground truth then shuffled sensors input. And the generated sample also outperforms the pooled Gaussian (fitted to the target coordinate mean and variance), and standard Gaussian samples.

\begin{table}[ht]
\centering
\caption{\expInteractingParticle. Conditioning diagnostic on 100 test
interaction kernels. The score is sliced 2-Wasserstein on standardized physical
features invariant under permutations. Lower is better.}
\label{tab:d4_ablation}
\begin{adjustbox}{max width=\textwidth}
\begin{tabular}{lccc}
\toprule
Method & Mean & Median & Comparison \\
\midrule
Correct sensors & 1.7860 & 1.0159 & Model conditioned on the true kernel \\
Shuffled sensors & 30.1422 & 15.9140 & Correct wins $99/100$ \\
Pooled Gaussian & 23.1144 & 13.9604 & Correct wins $99/100$ \\
Random normal & 20.0154 & 12.3747 & Correct wins $98/100$ \\
\bottomrule
\end{tabular}
\end{adjustbox}
\end{table}

We can also observe that the generated samples reproduce the pair distance patterns within each family. The energy distribution, pooled across all test kernels, also has a similar shape to the target distribution. More quantitative results are collected in Table~\ref{tab:d4_obs_metrics} in
Appendix~\ref{app:particle_details}. Figure~\ref{fig:d4_gr} plots the
empirical pair correlation function $g(r)$ broken down by kernel family, and
Figure~\ref{fig:d4_energy} compares energy histograms.

\begin{figure}[ht]
    \centering
    \includegraphics[width=\textwidth]{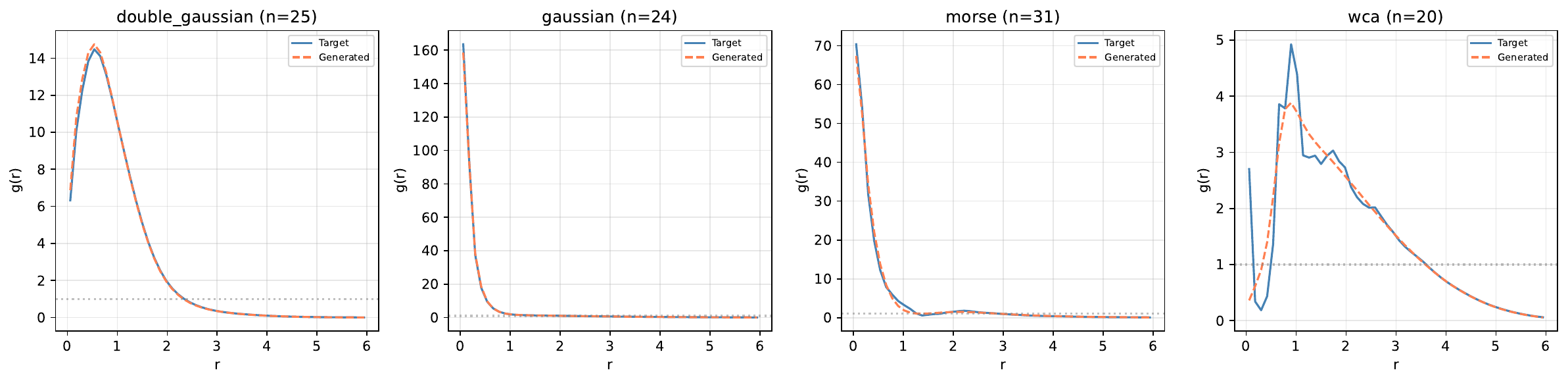}
    \caption{\expInteractingParticle. Pair correlation function $g(r)$ by kernel
    family: target (blue) vs.\ generated (dashed orange). Each subplot pools test
    kernels of one family. The curves are empirical histograms without additional smoothing.}
    \label{fig:d4_gr}
\end{figure}

\begin{figure}[ht]
    \centering
    \includegraphics[width=0.6\textwidth]{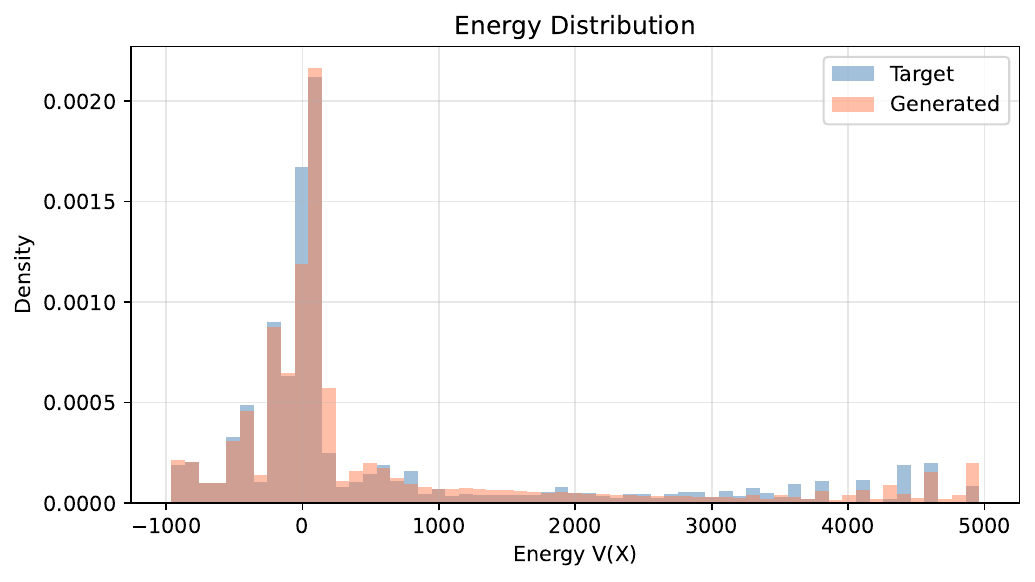}
    \caption{\expInteractingParticle. Energy histogram comparison: target
    (blue) vs.\ generated (orange).}
    \label{fig:d4_energy}
\end{figure}

Taken together, the random Lagrangian sensor architecture shows the potential to function as an amortized sampler of parametric families of high-dimensional SDEs.

\section{Conclusion}\label{sec:conclusion}

We present an amortized neural sampler that combines operator learning with continuous normalizing flows to sample from the invariant measures of parametric families of SDEs. The framework encodes SDE coefficients into a latent representation, then conditions a continuous normalizing flow that transports a Gaussian distribution to a target distribution. In practice, we adopt flow matching to train the model. Unlike the established DeepONet, our architecture ingests random Lagrangian trajectory sensors and uses an attention mechanism, which avoids grid discretization in spaces of high dimension. On the theory side, we establish a reduction theorem showing that the UAP of a neural operator implies the expressivity of a neural operator sampler. Thus, given a sufficiently powerful neural operator, our framework can fit the map from SDE coefficients to velocity fields that generate the invariant measures. We also prove that our framework is resolution invariant: the sampler remains consistent as the sensor count tends to infinity.

The \expVariableNoise{} and \expRareEvent{} experiments show the expected regime dependence: MCMC remains stronger in settings with easy, fast mixing; the amortized sampler becomes useful once slow mixing dominates the compute budget. In \expConstantNoise, the model with random Lagrangian sensors materially outperforms DeepONet. In \expInteractingParticle, the sampler demonstrates its capability in high dimensions.

Several directions remain open.  First, the supervised results rely on access to training samples from target invariant measures, which may require substantial offline simulation effort, and the amortized approach is most useful when many related SDE instances must be solved and mixing is slow. Data preparation, unsupervised training, and training without samples need further investigation. Second, the current framework targets the invariant measure, and a natural extension is to learn the full law $\mu_t$ that depends on time, or the \emph{weak solution} of the SDE.  Lastly, our experiments in higher dimensions are only a proof of concept. The network structure and training procedure have not been extensively optimized. Scaling to larger particle counts and configuration spaces of higher dimension remains an important challenge. 

\appendix

\section{Proofs of Main Results}\label{app:proofs}

\subsection{Stationary Fokker-Planck measures and stability}

\begin{proof}[Proof of Proposition~\ref{prop:stationary_fpk}]
Fix \(a=(b,\sigma)\in \mathcal{A}(\nu, p_0, \lambda, M) \), write \(A=A_a\), and let \(\mu_a\) be the corresponding stationary
measure. Existence and uniqueness follow by the standard result as commented in the text below the statement. Here, we only show how the moment bound follows by the Lyapunov condition.

For the moment bound, let \(V_p(x)=1+|x|^p\), \(2\le p\le p_0\). For
\(x\ne0\), the standard calculation gives
\[
\begin{aligned}
\mathcal L_a|x|^p &\le p|x|^{p-2}
    \left(\langle b(x),x\rangle+\frac12(p-1)\operatorname{Tr}A(x)\right) \\
    &\le
    p|x|^{p-2}
    \left(\langle b(x),x\rangle+\frac12(p_0-1)\operatorname{Tr}A(x)\right) \\
    &\le
    p|x|^{p-2}\bigl(-\lambda |x|^2+M\bigr)
    \le -c_p|x|^p+C_p ,
\end{aligned}
\]
where \(c_p,C_p>0\) are independent of \(a\). The nonsmooth point at \(x=0\)
is handled either by smoothing \(V_p\) near the origin or by applying the
following cutoff argument to \(1+(|x|^2+\varepsilon)^{p/2}\) and then
letting \(\varepsilon\downarrow0\).

Let \(\theta_n\) be increasing, concave, and bounded, with
\[
    0\le\theta_n'\le1,\qquad \theta_n''\le0,
\]
and with \(\theta_n'(r)\uparrow1\), \(\theta_n(r)\uparrow r\) on compact
intervals. Then
\[
\begin{aligned}
    \mathcal L_a\theta_n(V_p)
    &=
    \theta_n'(V_p)\mathcal L_aV_p
    +\frac12\theta_n''(V_p)
    \langle A\nabla V_p,\nabla V_p\rangle_{\mathbb R^d} \\
    &\le \theta_n'(V_p)(-c_pV_p+C_p).
\end{aligned}
\]
Integrating against \(\mu_a\), using stationarity, and then applying monotone
convergence gives
\[
    c_p\int V_p\,d\mu_a\le C_p ,
\]
uniformly over \(a\in \mathcal{A}(\nu, p_0, \lambda, M)\), which proves the claim.
\end{proof}

\begin{proof}[Proof of Lemma \ref{lemma:inv_stability}]

Let \(a_n\in\mathcal{A}(\nu, p_0, \lambda, M)\) and \(a_n\to a\) locally uniformly. Write \(A_n=A_{a_n}\) and \(A=A_a\). On every compact set, \(b_n\to b\) and \(A_n\to A\) uniformly, as noted in Section~\ref{sec:theoryexpress}. The two conditions that define \(\mathcal{A}(\nu, p_0, \lambda, M)\) are closed conditions on the values \((b(x),A_a(x))\) at each fixed \(x\). Hence they pass to the limit, and \(a\in\mathcal{A}(\nu, p_0, \lambda, M)\). This proves closedness.

Write \(\mu_n=\mu_{a_n}\) and \(\mu=\mu_a\). 
By the Lyapunov condition, $\int |x|^p d\mu_n$ is uniformly bounded.
This gives tightness. Take an arbitrary subsequence of \(\{\mu_n\}\). By
Prokhorov's theorem it has a further subsequence, not relabeled, such
that \(\mu_n\Longrightarrow\nu\) weakly for some probability measure
\(\nu\).

Let \(\varphi\in C_c^\infty(\mathbb R^d)\). Since \(\mu_n\) is
invariant,
\[
\int \mathcal L_{a_n}\varphi\,d\mu_n=0.
\]
On the compact support of \(\varphi\), \(b_n\to b\) and \(A_n\to A\) uniformly. Therefore
\[
\mathcal L_{a_n}\varphi
= b_n\cdot\nabla\varphi
+\frac12\operatorname{Tr}(A_nD^2\varphi)
\longrightarrow
\mathcal L_a\varphi
\]
uniformly.   Taking the limit $n\to\infty$, one then has
\[
\int \mathcal L_a \varphi\,d\nu=0
\qquad
\forall \varphi\in C_c^\infty(\mathbb R^d).
\]
By the density argument, this can be extended to $C_b^2$ test functions.  Hence, \(\nu\) is a invariant probability measure for the limiting SDE. The invariant measure is unique,
so \(\nu=\mu\).
Since every subsequence has a further subsequence converging weakly to
\(\mu\), the full sequence satisfies \(\mu_n\Rightarrow\mu\).

By the uniform $p$-moments, one has the uniform integrability of second moments, or 
\[
\lim_{R\to\infty}\sup_n \int_{|x|\ge R}|x|^2\mu_n(dx)\le \lim_{R\to\infty} R^{-(p_0-2)}M_{p_0}=0.
\]
Then, this together with the weak convergence implies that $\int |x|^2 d\mu_n\to \int |x|^2 d\mu $.
Hence \(W_2(\mu_n,\mu)\to0\), proving
continuity of \(a\mapsto\mu_a\). 
\end{proof}

\phantomsection\label{app:canonical_target}
\subsection{Proofs for the canonical target velocity}

We use the notation
\(\mu_a^S,\rho_t^a,J_t^a,v^a,\chi_R,v^{a,R}\), and \(\mu_a^{S,R}\)
introduced in Section~\ref{sec:theoryexpress}.

\begin{proof}[Proof of Lemma~\ref{lem:smoothed_velocity}]
First record the uniform moment input. Since
\(X\sim\gamma_\alpha=\mathcal N(0,(1-\alpha)I_d)\).
Let $D=Y-X$.
One has
\[
|D|^p\le 2^{p-1}(|X|^p+|Y|^p).
\]
Hence, one has $\sup \mathbb{E} |D|^p<\infty$ by the uniform control of $\mathbb{E} |Y|^p$. By the conditional expectation formula and Jensen's inequality:
\[
\int |v^a(z,t)|^p\,d\rho_t^{a}(z)
= \mathbb E\left|\mathbb E[D\mid Z_t^{a}]\right|^p
\le \mathbb E|D|^p  \le C_{\alpha,S,p},
\]
where the last bound follows from the exponential moment bound for
\(D\).

It remains to justify the pointwise linear growth estimate. 
Since \(X\mid (Z_t=z,Y=y)\) is Gaussian with mean
\[
m_{t,y,z}
=\frac{(1-t)(1-\alpha)}
{(1-t)^2(1-\alpha)+\alpha}\,(z-ty),
\]
one finds by the boundedness $|Y|\le S$  that
\[
|v^a(z,t)| \le \mathbb E[|Y|+|X|\mid Z_t^a=z] \le 
\mathbb{E}_{Y\mid Z_t^a=z} \mathbb E[|X|+|Y|\mid Z_t^a=z,Y]\le  C_{\alpha,S}(1+|z|).
\]
Lastly, differentiating the Gaussian kernel in \(t\), one has
\[
\partial_t\phi_\alpha(z-((1-t)x+ty))+\nabla_z\cdot\left((y-x)\phi_\alpha(z-((1-t)x+ty))\right)=0.
\]
Integrating this identity against \(\gamma_\alpha(dx)\mu_a^S(dy)\),
justified by dominated convergence using the Gaussian kernel and the
moment bounds above, gives the continuity equation.
\end{proof}

\begin{proof}[Proof of Lemma~\ref{lem:compact_target_continuity}]

We have shown that if \(a_n\to a\), then \(\mu_{a_n}^S\to\mu_a^S\) in \(W_2\). Write
\(\nu_n=\mu_{a_n}^S\) and \(\nu=\mu_a^S\). These measures are all supported in
\(\overline B_S\). Note that $\rho,J$, $\nabla_z\rho, \nabla_zJ$ ar obtained by integrating $\phi_{\alpha}, (y-x)\phi_{\alpha}$ and their $z$-derivatives with respect to $\gamma_{\alpha}(dx)\mu_a^S(dy)$.
These kernels are continuous in \((x,y,z,t)\), while $\gamma_{\alpha}$ is a Gaussian and $\mu_a^S$ is finitely supported. Hence they are dominated by an integrable
function of \(x\), uniformly over \(y\in\overline B_S\) and
\((z,t)\in\overline B_{2R}\times[0,1]\). Weak convergence of \(\nu_n\) gives
pointwise convergence of the corresponding integrals. Uniform continuity of
the kernels on
\[
\{|x|\le M,\ |y|\le S,\ |z|\le2R,\ t\in[0,1]\}
\]
plus the uniformly small Gaussian \(x\)-tail outside \(\{|x|\le M\}\) upgrades
this to uniform convergence on \(\overline B_{2R}\times[0,1]\).

There is also a uniform positive lower bound for \(\rho^a\). Indeed, for
\(|z|\le2R\), \(y\in\overline B_S\), \(t\in[0,1]\), and \(|x|\le1\),
\[
|z-((1-t)x+ty)|\le 2R+1+S .
\]
Therefore
\[
\rho_t^{a}(z)
\ge \gamma_\alpha(B_1)\,
(2\pi\alpha)^{-d/2} \exp\left(-\frac{(2R+1+S)^2}{2\alpha}\right)>0,
\]
uniformly in \(\nu\), \(z\), and \(t\). The quotient formula
\[
\nabla_z v^a =\frac{\rho^a\,\nabla_z J^a-J^a\otimes\nabla_z\rho^a}{(\rho^a)^2}
\]
therefore gives uniform convergence of the velocities and their first
\(z\)-derivatives on the compact cylinder. 

Recall that $v^{a,R}(x,t)=\chi_R(x)v^a(x,t)$ for \(a\in \mathcal{A}(\nu,p_0,\lambda,M)\).
Multiplication by the fixed smooth
cutoff preserves continuity in \(a\). It also yields a global Lipschitz bound:
on \(\overline B_{2R}\) the derivative of
\(\chi_Rv^a\) is uniformly bounded, while outside
\(\overline B_{2R}\) the field is identically zero. Hence
\(L_{\alpha,S,R}<\infty\).
\end{proof}

\begin{proof}[Proof of Lemma~\ref{lem:truncation_consistency}]
Let \(X_t^a\) solve the untruncated ODE
\[
\dot X_t^a=v^{a}(X_t^a,t),\qquad X_0^a\sim\gamma,
\]
and let \(X_t^{a,R}\) solve the truncated ODE
\[
\dot X_t^{a,R}=v^{a,R}(X_t^{a,R},t),\qquad X_0^{a,R}=X_0^a.
\]
The untruncated terminal law is \(\rho_1^a\), and the truncated
terminal law is \(\mu_a^{S,R}\).

Define
\[
\tau_R^a=\inf\{t\in[0,1]:|X_t^a|>R\}.
\]
On \(\{\tau_R^a>1\}\),  \(X_1^{a,R}=X_1^a\).  If \(\tau_R^a\le1\), then either \(|X_0^a|>R/2\) or
\[
\int_0^1 |v^a(X_t^a,t)|\,dt>R/2.
\]
Since \(X_t^a\) has law \(\rho_t^{a}\), the velocity moment bound
gives
\[
\sup_{a\in \mathcal{A}(\nu,p_0,\lambda,M)}\mathbb E\int_0^1
|v^a(X_t^a,t)|^p\,dt\le C_{\alpha,S,p}.
\]
Markov's inequality and the moments of \(X_0^a\) therefore give
\[
\sup_{a\in \mathcal{A}(\nu,p_0,\lambda,M)}\mathbb P(\tau_R^a\le1)\le C_{\alpha,S,p} R^{-p}
\]
for every \(p>2\).

Moreover, the linear growth bound and Gronwall's inequality give uniform
\(p\)-moment bounds for \(X_1^{a,R}\) and \(X_1^a\).
Using the coupling with a common initial point,
\[
W_2^2(\rho_1^{a},\mu_a^{S,R})
\le
\mathbb E|X_1^{a,R}-X_1^a|^2
\le C \mathbb P(\tau_R^a\le1)^{1-2/p}
\le C R^{-(p-2)}.
\]
This gives the claim.
\end{proof}

\subsection{Reduction and neural approximation}\label{app:express}

\begin{proof}[Proof of Proposition~\ref{prop:flow_stability}]
Couple the two flows by the same initial point
\(X_0\sim\rho_0\):
\[
X_t=\Phi_t(X_0),
\qquad
\hat X_t=\hat\Phi_t(X_0).
\]
Let \(\Delta_t=X_t-\hat X_t\). Then, one has
\[
\frac{d}{dt}|\Delta_t|^2\le 2L|\Delta_t|^2+2|\Delta_t|\delta,
\]
implying that \(|X_1-\hat X_1|\le e^L\delta\) almost surely. The joint law of
\((X_1,\hat X_1)\) is a coupling of the two terminal distributions, so
\[
W_2((\Phi_1)_\#\rho_0,(\hat\Phi_1)_\#\rho_0)
\le \left(\mathbb E|X_1-\hat X_1|^2\right)^{1/2}
\le e^L\delta.
\]
\end{proof}

\begin{proof}[Proof of Proposition~\ref{thm:reduction}]
Fix \(a\in K\). Let \(X_t\) solve the target truncated ODE
\[
    \dot X_t=v^{a,R}(X_t,t),
    \qquad X_0\sim\gamma,
\]
and let \(\widehat X_t\) solve the exact learned ODE
\[
    \dot{\widehat X}_t=\hat v_\theta[a](\widehat X_t,t),
    \qquad \widehat X_0=X_0.
\]
Put \(\beta=e^{L_{\alpha,S,R}}\delta_{\theta,\alpha,S,R}\). By assumption,
\(\beta<R\). Define
\[
    \tau=\inf\{t\in[0,1]:|\widehat X_t|\ge2R\}.
\]
On \([0,\tau)\), the learned path remains in \(B_{2R}\), where the
compact domain approximation bound applies. The same Gr\"onwall estimate as in
Proposition~\ref{prop:flow_stability}, stopped at \(\tau\), gives
\[
    |X_t-\widehat X_t|\le \beta,
    \qquad t\le\tau .
\]
If \(\tau\le1\), then \(|\widehat X_\tau|=2R\), and hence
\(|X_\tau|\ge2R-\beta>R\). Therefore
\[
    \{\tau\le1\}   \subset \left\{\sup_{t\le1}|X_t|\ge R\right\}.
\]
The target field satisfies
\(|v^{a,R}(x,t)|\le C_{\alpha,S}(1+|x|)\), and the learned field
satisfies the assumed global envelope. Gr\"onwall's inequality and Gaussian
moments of \(X_0\) give the uniform path moment bounds
\[
    \sup_{a\in K}\mathbb E\sup_{t\le1}|X_t|^p
    +\sup_{a\in K}\mathbb E\sup_{t\le1}|\widehat X_t|^p
    \le C_{\alpha,S,p}.
\]
Hence \(\P(\tau\le1)\le C_{\alpha,S,p}R^{-p}\), and H\"older's
inequality gives
\[
\begin{aligned}
    \mathbb E\left[
    |X_1-\widehat X_1|^2\mathbf 1_{\{\tau\le1\}}  \right]
    &\le  \left(\mathbb E(|X_1|+|\widehat X_1|)^p\right)^{2/p}
    \mathbb P(\tau\le1)^{1-2/p} \\
    &\le C_{\alpha,S,p}R^{-(p-2)} .
\end{aligned}
\]
On \(\{\tau>1\}\), the stopped estimate gives
\(|X_1-\widehat X_1|\le\beta\). Thus
\[
    W_2(\hat\mu_{a,\theta},\mu_a^{S,R})
    \le e^{L_{\alpha,S,R}}\delta_{\theta,\alpha,S,R} +C_{\alpha,S,p}R^{-(p-2)/2}.
\]
The triangle inequality, the truncation definition, the smoothing coupling,
and the projection estimate give
\[
\begin{aligned}
    W_2(\hat\mu_{a,\theta},\mu_a)
    &\le  e^{L_{\alpha,S,R}}\delta_{\theta,\alpha,S,R} +C_{\alpha,S,p}R^{-(p-2)/2}  +W_2(\mu_a^{S,R},\rho_1^a) \\
    &\quad  +W_2(\rho_1^a,\mu_a^S) +W_2(\mu_a^S,\mu_a) \\
    &\le
    e^{L_{\alpha,S,R}}\delta_{\theta,\alpha,S,R} +C_{\alpha,S,p}R^{-(p-2)/2}
    +\varepsilon_{\mathrm{trun}}(\alpha,S,R) +\sqrt{\alpha d}   +M_{p_0}^{1/2}S^{-(p_0-2)/2}.
\end{aligned}
\]
Taking the supremum over \(a\in K\) proves the theorem.
\end{proof}

\begin{proof}[Proof of Theorem~\ref{cor:conditional_expressivity}]
Fix \(\eta>0\) and \(p>2\). Choose \(S<\infty\) so large that
\[
    M_{p_0}^{1/2}S^{-(p_0-2)/2}<\eta/5 .
\]
Choose \(\alpha\in(0,1)\) so that \(\sqrt{\alpha d}<\eta/5\). For this
\((\alpha,S)\), choose \(R>0\) so large that
\[
    \varepsilon_{\mathrm{trun}}(\alpha,S,R)<\eta/5,
    \qquad  C_{\alpha,S,p}R^{-(p-2)/2}<\eta/5 .
\]
For the fixed triple \((\alpha,S,R)\), Lemma~\ref{lem:compact_target_continuity} shows that \(a\mapsto v^{a,R}\) is
continuous on \(K\). The operator
\(\Psi(a)=v^{a,R}\) is continuous from \(K\) to
\(C(\overline B_{2R}\times[0,1];\mathbb R^d)\). It also satisfies
\(|\Psi(a)(x,t)|\le C_{\alpha,S}(1+|x|)\). Apply
Assumption~\ref{ass:operator_uat} with \(C_{\rm env}=C_{\alpha,S}\), and
choose \(\theta\) such that
\[
    \delta_{\theta,\alpha,S,R}
    <  \min\left\{
    Re^{-L_{\alpha,S,R}},  \frac{\eta}{5e^{L_{\alpha,S,R}}}
    \right\}.
\]
Proposition~\ref{thm:reduction} gives
\[
    \sup_{a\in K}W_2(\hat\mu_{a,\theta},\mu_a)<\eta .
\]
\end{proof}

\begin{proof}[Proof of Proposition \ref{prop:lagrangian_uat}]
Let \(\Psi:K\to C(Y;\mathbb R^d)\) be a continuous target operator as in
Assumption~\ref{ass:operator_uat}, with \(|\Psi(a)(x,t)|\le C_{\rm env}(1+|x|)\). Fix
\(\delta>0\).

By compactness of \(K\), choose \(\varepsilon_\Psi>0\) such that
\(d_{\mathcal X_a}(a,a')<\varepsilon_\Psi\) implies
\(\|\Psi(a)-\Psi(a')\|_{C(Y)}<\delta/3\). Apply (FS) with this
\(\varepsilon_\Psi\) and write \(\mathcal S=\mathcal S_{m,L}\); the image
\(\mathcal S(K)\) is compact. Cover it by finitely many balls
\(B(\mathcal S(a_i),r/2)\), \(i=1,\ldots,N\), and take a continuous partition
of unity \(\{\chi_i\}_{i=1}^N\) subordinate to the enlarged balls
\(B(\mathcal S(a_i),r)\). Define

\[
    \bar G(s)(y)=\sum_{i=1}^N\chi_i(s)\,\Psi(a_i)(y),
    \qquad s\in\mathcal S(K),\ y=(x,t)\in Y.
\]
If \(\chi_i(\mathcal S(a))>0\), then
\(d_{\rm sens}(\mathcal S(a),\mathcal S(a_i))<r\), so (FS) gives
\(d_{\mathcal X_a}(a,a_i)<\varepsilon_\Psi\); hence

\[
    \sup_{a\in K}\|\bar G(\mathcal S(a))-\Psi(a)\|_{C(Y)}<\delta/3 .
\]
The map \((s,y)\mapsto\bar G(s)(y)\) is continuous on
\(\mathcal S(K)\times Y\) and invariant under permutations of the sensor argument.
By (EU), the finite sensor Perceiver--AdaLN network approximates it within
\(\delta/3\), uniformly over \(a\in K\) and \(y\in Y\). Combining the two
estimates gives

\[
    \sup_{a\in K}\sup_{(x,t)\in Y}
    |\mathcal N_\theta(a)(x,t)-\Psi(a)(x,t)|<\delta,
\]
the compact domain approximation part of Assumption~\ref{ass:operator_uat}.
The final statement follows by adding the global linear growth clause; local
Lipschitzness is already part of (EU).
\end{proof}

\begin{proof}[Proof of Theorem \ref{thm:randomFS}]

First we note that for general $L\ge 1$, one clearly has
\[
d_{\rm sens}(\mathcal S_{m,L}(a),\mathcal S_{m,L}(a'))
\ge d_{\rm sens}(\mathcal S_{m,1}(a),\mathcal S_{m,1}(a')).
\]
where $\mathcal S_{m,1}(a)$ means that we only keep the first point in each trajectory and the 
corresponding values of $a$. Then,
\[
\mathcal S_{m,1}(a)=\{(x_i, b(x_i), \rm vec~ \sigma(x_i))\}_{i=1}^m
\]
where $(x_i)$'s are sampled i.i.d from $\rho_0(x)\,dx$. Hence,
\[
d_{\rm sens}(\mathcal S_{m,1}(a),\mathcal S_{m,1}(a'))
=\max_{1\le i\le m}\|a(x_i)-a'(x_i)\|.
\]
Hence, it suffices to show with probability $1-\delta$ such that
\[
\max_{1\le i\le m}\|a(x_i)-a'(x_i)\|\le r \quad \Longrightarrow \quad 
d_{\mathcal{X}_a}<\varepsilon.
\]

We first take a radius $J$ so large that the tail of the metric
\(d_{\mathcal X_a}\) is smaller than the prescribed tolerance. On the compact
ball \(B_J\), the family \(K\) is uniformly equicontinuous. Therefore a
sufficiently fine spacing \(h\) gives that, if every point in \(B_J\) has some
sample point \(x_i\) with distance at most \(h\), then
\[
    d_{\mathcal X_a}(a,a')<\varepsilon .
\]
Hence, it suffices to show with probability $1-\delta$ that
$\{x_i\}_{i=1}^m$ forms a $h$-net of $B_J$.

We take $N$ balls with radius $h/2$ that cover $B_J$. The number of such balls can be chosen as
$N\le (CJ/h)^d$. If each ball contains at least one $x_i$, then $\{x_i\}_{i=1}^m$ forms a $h$-net of $B_J$.
For each ball $B_{\ell}$, the probability that it is empty is bounded by 
\[
(1-q_\ell)^m\le \exp(-m q_{\ell}),
\]
where 
\[
q_{\ell}=\int_{B_{\ell}}\rho_0\,dx.
\]
Taking union bound, the probability that there is one empty ball is controlled by
\[
p\le N \sup_{\ell}\exp(-N q_{\ell})\le (CJ/h)^d\exp(-m \min_{\ell}q_{\ell}).
\]
For $p\le \delta$, one needs
\[
m\gtrsim d (\min_{\ell}q_{\ell})^{-1}.
\]
This shows the first claim. 

The remaining claims are straightforward.
\end{proof}

\subsection{Resolution invariance proofs}\label{app:randomresolution}

We use the notation of Section~\ref{sec:resoinvar} for the attention maps
\(A^h_\ell\), \(H_\ell\), the self attention--MLP update \(G_\ell\), and the
encoder \(\operatorname{Encoder}\).
The finite and infinite sensor encodings are \(z_{a,m}\) and \(z_a\).

\begin{proof}[Proof of Lemma~\ref{lem:perceiver_consequences}]
Write \(\mathcal K=K_q\times K_e\), compact. Each \(\omega^h_\ell,F^h_\ell\) is
\(C^\infty\) on \(\mathcal K\), so
\[
    \omega_{\min}=\min_{\ell,h,\,\mathcal K}\omega^h_\ell,\quad
    \omega_{\max}=\max_{\ell,h,\,\mathcal K}\omega^h_\ell,\quad
    M_F=\max_{\ell,h,\,\mathcal K}\|F^h_\ell\|,\quad
    L_\omega=\max_{\ell,h,\,\mathcal K}\bigl(\|\nabla_q\omega^h_\ell\|+\|\nabla_q F^h_\ell\|\bigr)
\]
are finite, with \(\omega_{\min}>0\) since
\(\omega^h_\ell(q,e)=\exp(\langle q,\tilde eW^K_{\ell,h}\rangle/\sqrt{d_h})>0\); this gives
(i) and (ii), and integrating the lower bound gives
\(\int\omega^h_\ell(q,\cdot)\,d\nu\ge\omega_{\min}\) for every probability \(\nu\) on
\(K_e\).

For (iii), the \(\varepsilon\)-regularized LayerNorm normalizes each row to
Euclidean norm \(\le\sqrt{d_z}\) before its fixed affine map, so its output rows are
bounded, \(\|\mathrm{LN}(u)_r\|\le\rho_{\mathrm{LN}}<\infty\), and it is Lipschitz,
\(\operatorname{Lip}(\mathrm{LN})\le L_{\mathrm{LN}}<\infty\) (the variance is
\(\ge\varepsilon\)). Each \(G_\ell\) and each message addition step ends in a
LayerNorm, so with \(Z^{(0)}\) fixed the recursion \eqref{eq:Zrecursion} keeps every latent row at
norm \(\le\rho:=\max\{\rho_{\mathrm{LN}},\ \max_r\|Z^{(0)}_r\|\}\);
hence
\[
    K_z=\{Z\in\mathbb R^{k\times d_z}:\ \max_r\|Z_r\|\le\rho\},
    \qquad
    K_q=\{zW^Q_{\ell,h}:z\in K_z,\ \ell,h\}
\]
are compact and contain all reachable latents and queries. The query map is linear,
\(L^Q_\ell=\max_h\|W^Q_{\ell,h}\|\), and
\(G_\ell=\mathrm r[\mathrm{MLP}]\circ\mathrm r[\mathrm{SA}]\) composes
\(\varepsilon\)-LayerNorms, self attention, and the MLP, Lipschitz on \(K_z\) with
\(L^G_\ell=\operatorname{Lip}(G_\ell|_{K_z})\). For (iv), \(\operatorname{Decoder}\) is
\(C^1\), so
\[
    L^{\rm dec}_{\theta,R}=\sup_{K_z\times\overline B_R\times[0,1]}\|\nabla_z\operatorname{Decoder}\|<\infty .
\]
\end{proof}

\begin{proof}[Proof of Lemma~\ref{lem:attention_ulln}]
By Lemma~\ref{lem:perceiver_consequences}(ii), \(q\mapsto\omega^h_\ell(q,\cdot)\)
and \(q\mapsto[F^h_\ell(q,\cdot)]_r\) are Lipschitz from \(K_q\) into
\((C(K_e),\|\cdot\|_\infty)\); as \(K_q\) is compact and there are finitely many
\((\ell,h,r)\), \(\mathfrak F_\theta\) is a finite union of Lipschitz images of
compact sets, hence totally bounded in \(\|\cdot\|_\infty\), with envelope
\(M_\theta<\infty\). 

Fix \(\varepsilon>0\) and a finite \(\varepsilon\)-net
\(\{f_1,\ldots,f_N\}\subset\mathfrak F_\theta\). For every \(a\in K\),
\[
    \sup_{f\in\mathfrak F_\theta}
    \left| \int f\,d\hat\nu_{a,m}   -   \int f\,d\nu_a
    \right| \le  2\varepsilon  +  \max_{1\le j\le N}
    \left| \int f_j\,d\hat\nu_{a,m} -   \int f_j\,d\nu_a \right|.
\]
Since \(\hat\nu_{a,m}\) is the empirical law of \(m\) independent samples from
\(\nu_a\) and \(\|f_j\|_\infty\le M_\theta\), each term satisfies
\[
\sup_{a}\mathbb E_{\mathbf e}|\int f_j\,d\hat\nu_{a,m}-\int f_j\,d\nu_a|\le
M_\theta m^{-1/2},
\]
 so the maximum over the finite net is bounded by
\(NM_\theta m^{-1/2}\).

Hence \(\limsup_{m\to\infty}\delta_m^{\rm att}\le2\varepsilon\), and as \(\varepsilon>0\) is arbitrary, \(\delta_m^{\rm att}\to0\). 
\end{proof}

\begin{proof}[Proof of Theorem~\ref{prop:random_sensor_di}]

Note that for the resolution invariance, the parameters and architecture are fixed. 
We are increasing the number of probes (i.e., $m$ ) so that only the measure $\nu$ involved are varying.

\paragraph{Step 1: estimate for the encoder}

Let
\[
    \Delta_\ell   := \sup_{a\in K}  \mathbb E_{\mathbf e}  \|Z_\ell^{a,m}-Z_\ell^{a}\|,
\]
with \(\Delta_0=0\). We now derive the induction formula for $\Delta_{\ell}$.

For any \(\ell, h\), we recall that
\[
A^h_\ell(q,\nu)=\frac{\int F^h_\ell(q,\cdot)\,d\nu }{\int\omega^h_\ell(q,\cdot)\,d\nu}.
\]
By Lemma~\ref{lem:perceiver_consequences},
\[
\int\omega^h_\ell(q,\cdot)\,d\nu \ge \omega_{\min},
\quad \|\int F^h_\ell(q,\cdot)\,d\nu \|\le M_F,
\]
where the bounds are uniform in $\ell, h$.
Introducing \(\mathfrak F_{\ell,h}=\{\omega^h_\ell(q,\cdot):q\in K_q\}\cup\{[F^h_\ell(q,\cdot)]_r:q\in K_q,\ r=1,\dots,d_h\}\),
one then has
\[
\|N^\nu_q-N^{\tilde\nu}_q\|\le\sqrt{d_h}\,\sup_{f\in\mathfrak F_{\ell,h}}|\int f\,d\nu-\int f\,d\tilde\nu|,
\quad |D^\nu_q-D^{\tilde\nu}_q|\le \sup_{f\in\mathfrak F_{\ell,h}}|\int f\,d\nu-\int f\,d\tilde\nu|.
\]
Hence, the change of measure $\nu$ will bring in error as follows
\[
    \|A^h_\ell(q,\nu)-A^h_\ell(q,\tilde\nu)\|
    \le  \Bigl(\frac{\sqrt{d_h}}{\omega_{\min}}+\frac{M_F}{\omega_{\min}^2}\Bigr)
     \sup_{f\in\mathfrak F_{\ell,h}}|\int f\,d\nu-\int f\,d\tilde\nu|.
\]
Together  with Lemma~\ref{lem:perceiver_consequences} (ii),  one has the total error brought by the change of $q$
and change of measure:
\begin{equation}\label{app:stabilityaux1}
\|A^h_\ell(q,\nu)-A^h_\ell(q',\tilde\nu)\|\le L_A\|q-q'\|+C_A\, \sup_{f\in\mathfrak F_{\ell,h}}|\int f\,d\nu-\int f\,d\tilde\nu|,
\end{equation}
with \(C_A=\sqrt{d_h}/\omega_{\min}+M_F/\omega_{\min}^2\) and
\(L_A=L_\omega/\omega_{\min}+M_F L_\omega/\omega_{\min}^2\).

Since
\[
 H_\ell(Z,\nu) =\bigl[\,A^h_\ell(z W^Q_{\ell,h},\nu)\,\bigr]_{h=1}^{H}\,W^O_\ell,
 \]
one has by \eqref{app:stabilityaux1}  and the
mean uniform law of large numbers in Lemma~\ref{lem:attention_ulln} that
\[
    \sup_{a\in K} \mathbb E_{\mathbf e}  \|H_\ell(Z_\ell^{a,m},\hat\nu_{a,m})  - H_\ell(Z_\ell^{a},\nu_a)\|
    \le  C_\ell\Delta_\ell   + C_\ell\delta_m^{\rm att}.
\]
Recall \eqref{eq:Zrecursion}, using the Lipschitz bounds for \(G_\ell\) one has,
\[
    \Delta_{\ell+1}  \le C_\ell\Delta_\ell + C_\ell\delta_m^{\rm att}.
\]
The depth \(L_{\rm enc}\) is fixed, so one easily derive by induction that
\[
    \sup_{a\in K} \mathbb E_{\mathbf e}
    \|z_{a,m}-z_a\|   \le  C_\theta\delta_m^{\rm att}.
\]

\paragraph{Step 2: estimate for the encoder and the terminal law}

Fix a localization radius \(R\) to be determined. The decoder Lipschitz condition (Lemma \ref{lem:perceiver_consequences}) on
compact \(x\)-cylinders gives
\[
\begin{aligned}
    \eta_m(R)  &= \sup_{a\in K} \mathbb E_{\mathbf e}
    \sup_{(x,t)\in\overline B_R\times[0,1]}  |v_{a,m}(x,t)-v_a(x,t)| \\
    &\le  L^{\rm dec}_{\theta,R} \sup_{a\in K}
    \mathbb E_{\mathbf e}\|z_{a,m}-z_a\|  \le  C_{\theta,R}\delta_m^{\rm att}.
\end{aligned}
\]

Since the finite and infinite sensor flows start from the same \(X_0\sim\gamma\), one has for fixed \(a\) and \(\mathbf e\) that
\[
W_2^2(\hat\mu_{a,m},\hat\mu_a)\le\mathbb E_{X_0}|X_1^{a,m}-X_1^{a}|^2.
\]
 Let
\(\tau_R\) be the first time either path exits \(\overline B_R\). On
\(\{\tau_R\ge1\}\) both paths stay in \(\overline B_R\), so Gr\"onwall and the local
\(x\)-Lipschitz bound in \textnormal{(A2)} give the pathwise estimate
\[
    |X_1^{a,m}-X_1^{a}|\,\mathbf 1_{\{\tau_R\ge1\}}
    \le e^{L^x_{\theta,R}}  \sup_{(x,t)\in\overline B_R\times[0,1]}  |v_{a,m}(x,t)-v_a(x,t)| .
\]
The linear growth in \textnormal{(A2)} easily gives, for any fixed \(p>2\),
\[
\mathbb E\sup_{t\le1}|X_t|^p\le C_{\theta,p}
\]
uniformly in \(a\) and \(\mathbf e\) and for both $\hat{\mu}_{a,m}$ and $\hat{\mu}_a$. Hence,
for any \(a\) and \(\mathbf e\),
\(\mathbb P(\tau_R<1)\le C_{\theta,p}R^{-p}\), and thus
\[
    \mathbb E\bigl[\,|X_1^{a,m}-X_1^{a}|^2;\ \tau_R<1\,\bigr]
    \le  \bigl(\mathbb E|X_1^{a,m}-X_1^{a}|^p\bigr)^{2/p}\,
    \mathbb P(\tau_R<1)^{1-2/p}
    \le C_{\theta,p}R^{-(p-2)} .
\]
Combining the results above, one has
\[
    \sup_{a\in K}
    \mathbb E_{\mathbf e}\,   W_2(\hat\mu_{a,m},\hat\mu_a)
    \le
    e^{L^x_{\theta,R}}C_{\theta,R}\delta_m^{\rm att}  + C_{\theta,p}R^{-(p-2)/2}.
\]

Fixing \(R\) and
letting \(m\to\infty\), Lemma~\ref{lem:attention_ulln} gives \(\delta_m^{\rm att}\to0\);
letting then \(R\to\infty\) proves random sensor resolution invariance in
expectation.
\end{proof}

\section{Experimental Details}\label{app:expdetails}

We compute Sinkhorn divergence with GeomLoss: $p=2$ selects quadratic cost, and \texttt{blur}${}=0.05$ sets $\varepsilon=\texttt{blur}^2$.

All experiments are run on a single NVIDIA RTX~5090 GPU (32\,GB) with an AMD
EPYC~9575F CPU (128~cores), using Python~3.14, PyTorch~2.10, and CUDA~12.8.
Elapsed times include all overhead (data loading, compilation, I/O).
The RTX~5090 and the EPYC~9575F have comparable list prices
(\$\!2{,}000 and \$\!2{,}200 respectively).  The runtime comparisons between
GPU neural sampling and CPU MCMC use hardware of equivalent cost.

\subsection{1D experiments}\label{app:oned_details}

The two 1D experiments share the Multi-input DeepONet architecture of Section~\ref{sec:perceiver}: latent dimension $q=128$, base channels~32 in the branch CNN, a trunk MLP of depth~4 and width~256 with GELU activations, and a sinusoidal time embedding of dimension~32. Training uses conditional flow matching with minibatch optimal transport coupling of the source--target pairs~\cite{lipman_flow_2024}. Table~\ref{tab:1d_params} summarizes the data generation and optimization settings of the two experiments side by side.

\begin{table}[htbp]
\centering
\caption{Configuration of the two 1D experiments.}
\label{tab:1d_params}
\begin{adjustbox}{max width=\textwidth}
\begin{tabular}{lcc}
\toprule
 & \expVariableNoise & \expRareEvent \\
\midrule
Function instances & 65{,}536 & 65{,}536 \\
Train\,/\,test split & 64{,}512\,/\,1{,}024 & 64{,}512\,/\,1{,}024 \\
Grid resolution over $[-5,5]$ & 64 & 256 \\
Reference samples per function & 4{,}096 & 4{,}096 \\
Reference generation & Euler--Maruyama & direct GMM draws \\
\quad time step $\Delta t$ & 0.01 & --- \\
\quad equilibration time $T_{\mathrm{burn}}$ & 50 & --- \\
\quad collection interval & 0.1 & --- \\
\midrule
Epochs (gradient steps) & 1 (32{,}256) & 1 (32{,}256) \\
Optimizer (lr, weight decay) & \multicolumn{2}{c}{AdamW ($10^{-3}$, $10^{-4}$), OneCycleLR} \\
Batch size\,/\,samples per function & \multicolumn{2}{c}{256\,/\,32} \\
Gradient clipping\,/\,precision & \multicolumn{2}{c}{1.0\,/\,bfloat16 mixed} \\
Training time & 263.1\,s & 296.5\,s \\
\bottomrule
\end{tabular}
\end{adjustbox}
\end{table}

In \expVariableNoise, the GRF perturbations $s,s'$ are sampled on the uniform grid via the FFT, and the reference Euler--Maruyama simulation runs on the linearly interpolated drift and diffusion. The MCMC baseline in Table~\ref{tab:1d_vn} runs Euler--Maruyama with the same $T_{\mathrm{burn}}=50$ on the 1{,}024 test functions, collecting 4{,}096 samples each. Figure~\ref{fig:1d_vn_training} shows the training loss curve.

In \expRareEvent, the diffusion coefficient is a small random perturbation of a constant,
\begin{equation}
\sigma(x)=0.1+0.1\sum_{k=1}^2 a_k\cos(b_k x+c_k),
\end{equation}
and the drift follows from the zero flux condition of the stationary Fokker--Planck equation
$\partial_x(b\,p - \tfrac{1}{2}\partial_x(\sigma^2 p))=0$, which yields
\begin{equation}
b(x) = \tfrac{1}{2}\sigma(x)^2\,\nabla\log p(x) + \sigma(x)\,\nabla\sigma(x).
\end{equation}
Because the target density $p(x)$ is a known Gaussian mixture, training samples are drawn directly from the GMM; this makes the experiment a controlled test of the model's representational capacity and generalization in a rare event regime, and complete sample generation from the SDE is left for the other experiments. The MCMC baselines in Table~\ref{tab:1d_rare} run Euler--Maruyama with $\Delta t=0.001$ on the linearly interpolated drift and diffusion for each of the 1{,}024 test functions, collecting 4{,}096 samples: the \emph{converged} baseline uses $5\times 10^5$ steps per function (elapsed time 136.9\,s), and three baselines with controlled runtimes limit the MCMC budget per function.

For inference we compared RK4 with 4 and 16 integration steps against the adaptive dopri5 solver. RK4 with only 4~steps yields Sinkhorn~$=0.131$ in $1.4$\,s (1{,}024 functions), matching dopri5 (Sinkhorn~$=0.130$, $7.4$\,s) at $5\times$ lower cost; RK4-4 is therefore the default solver in all 1D experiments. Figure~\ref{fig:1d_rare_box} shows the distribution of Sinkhorn divergence across test functions for \expRareEvent: the model's error distribution is concentrated near zero, while MCMC exhibits a heavy right tail caused by failures from mode collapse.

\begin{figure}[htbp]
\centering
\includegraphics[width=0.5\textwidth]{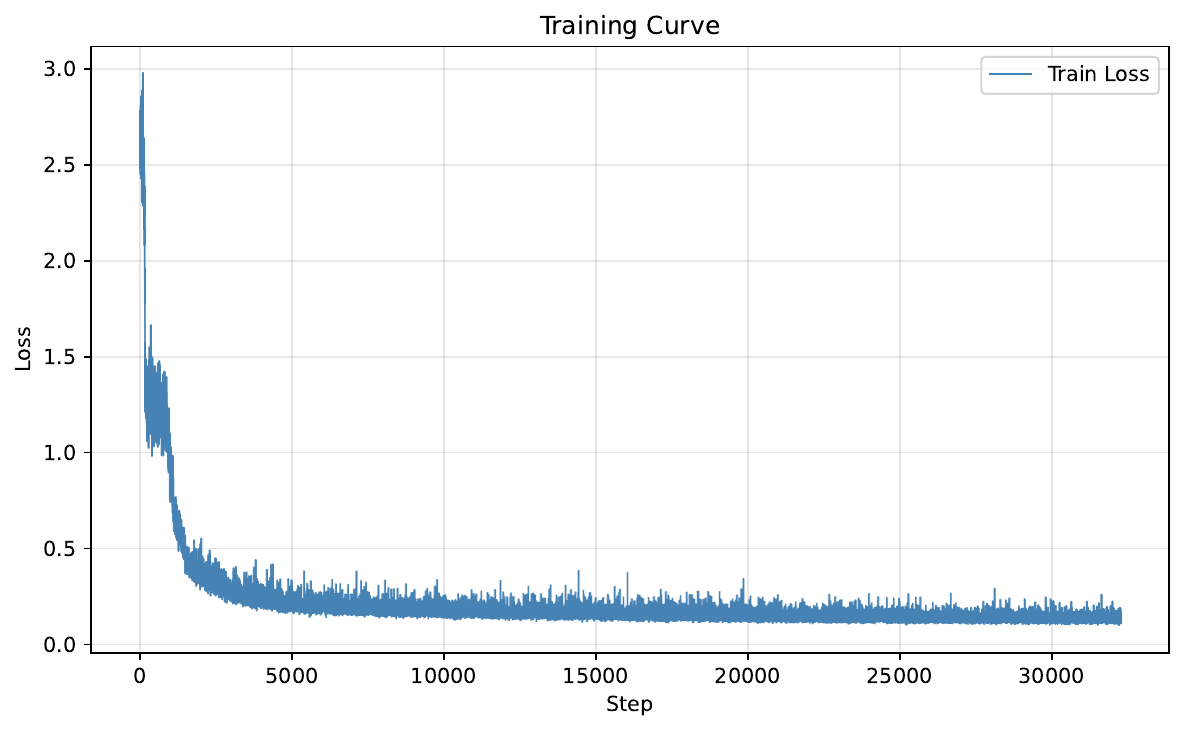}
\caption{\expVariableNoise. Training loss.\label{fig:1d_vn_training}}
\end{figure}

\begin{figure}[htbp]
\centering
\includegraphics[width=0.7\textwidth]{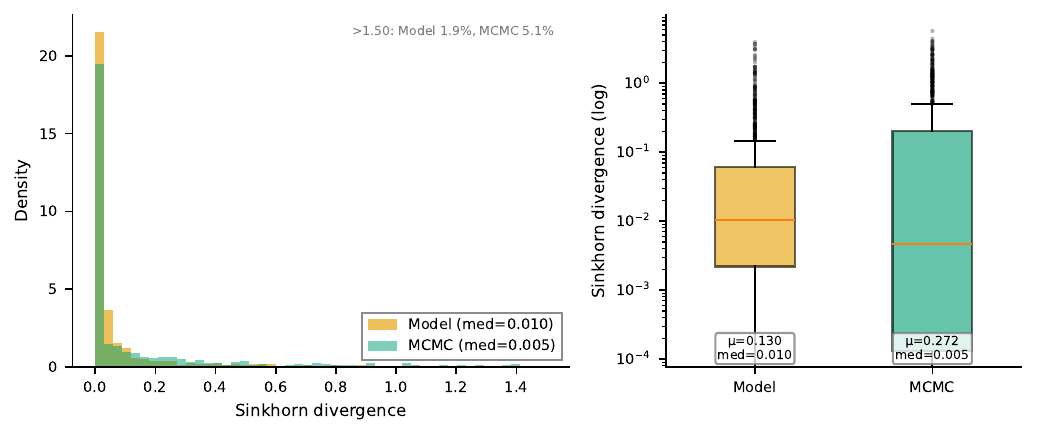}
\caption{\expRareEvent. Distribution of Sinkhorn divergence across 1{,}024 test functions:
    operator model vs.\ MCMC (converged). Left: histogram clipped at the 95th
    percentile; right: box plot on log scale.\label{fig:1d_rare_box}}
\end{figure}

\subsection{\expConstantNoise}\label{app:twod_details}

We generate 65{,}536 drift fields for training and 4{,}096 additional drift fields for evaluation. For each drift field, reference stationary samples are obtained by Euler--Maruyama with $dt=0.01$ and an equilibration period of 20 time units (2{,}000 steps), after which 4{,}096 samples are collected per function. All reported metrics are Sinkhorn divergences between 4{,}096 generated and 4{,}096 reference samples, computed on 100 test drift fields drawn from the evaluation split.

Probe data contain 20 steps with 8 features per step. Point and endpoint models use MLP encoders, and trajectory models use a 1D CNN. All use latent width 128 and a residual flow decoder trained with conditional flow matching and minibatch OT coupling. Each variant is trained for one epoch over the 65{,}536-function training split (8{,}192 optimizer steps at batch size 512) using OneCycleLR with peak learning rate $10^{-3}$, weight decay $10^{-4}$, gradient clipping 1.0, \texttt{torch.compile}, and bfloat16 mixed precision.

In Table~\ref{tab:d2_extra}, Concat appends the context to the hidden features at each decoder block.
Dot uses context-dependent weights to multiply the hidden features, following the branch--trunk product in DeepONet~\cite{lu_deeponet_2021}.
The sensor budget variants change the training count and retain 256 evaluation sensors.

\begin{table}[htbp]
\centering
\small
\caption{\expConstantNoise. Additional variants trained on 65{,}536 fields and evaluated on 100 test fields with 4{,}096 generated and reference samples each.
All use trajectory probes, Perceiver, and 256 evaluation sensors.}
\label{tab:d2_extra}
\begin{tabular}{lccc}
\toprule
Conditioning & Train sensors & Mean Sinkhorn & Train (s) \\
\midrule
Dot & 256 & 0.0203 & 248.6 \\
Concat & 256 & 0.0189 & 253.9 \\
AdaLN & 64 & 0.0180 & 180.1 \\
AdaLN & 128 & 0.0187 & 303.4 \\
\bottomrule
\end{tabular}
\end{table}

In the sensor resolution transfer study (Table~\ref{tab:d2_transfer}), each probe executes a 20-step random walk with step size $0.05$. For the $2048$-sensor and mixed count rows we keep the effective optimizer batch fixed at $512$ by using microbatch $256$ with gradient accumulation over two steps; the rows with high resolution fit on a single GPU without changing the optimization budget. Figure~\ref{fig:d2_samples} shows qualitative comparisons of target and generated samples for representative test drift fields.

\begin{figure}[htbp]
\centering
\includegraphics[width=\textwidth]{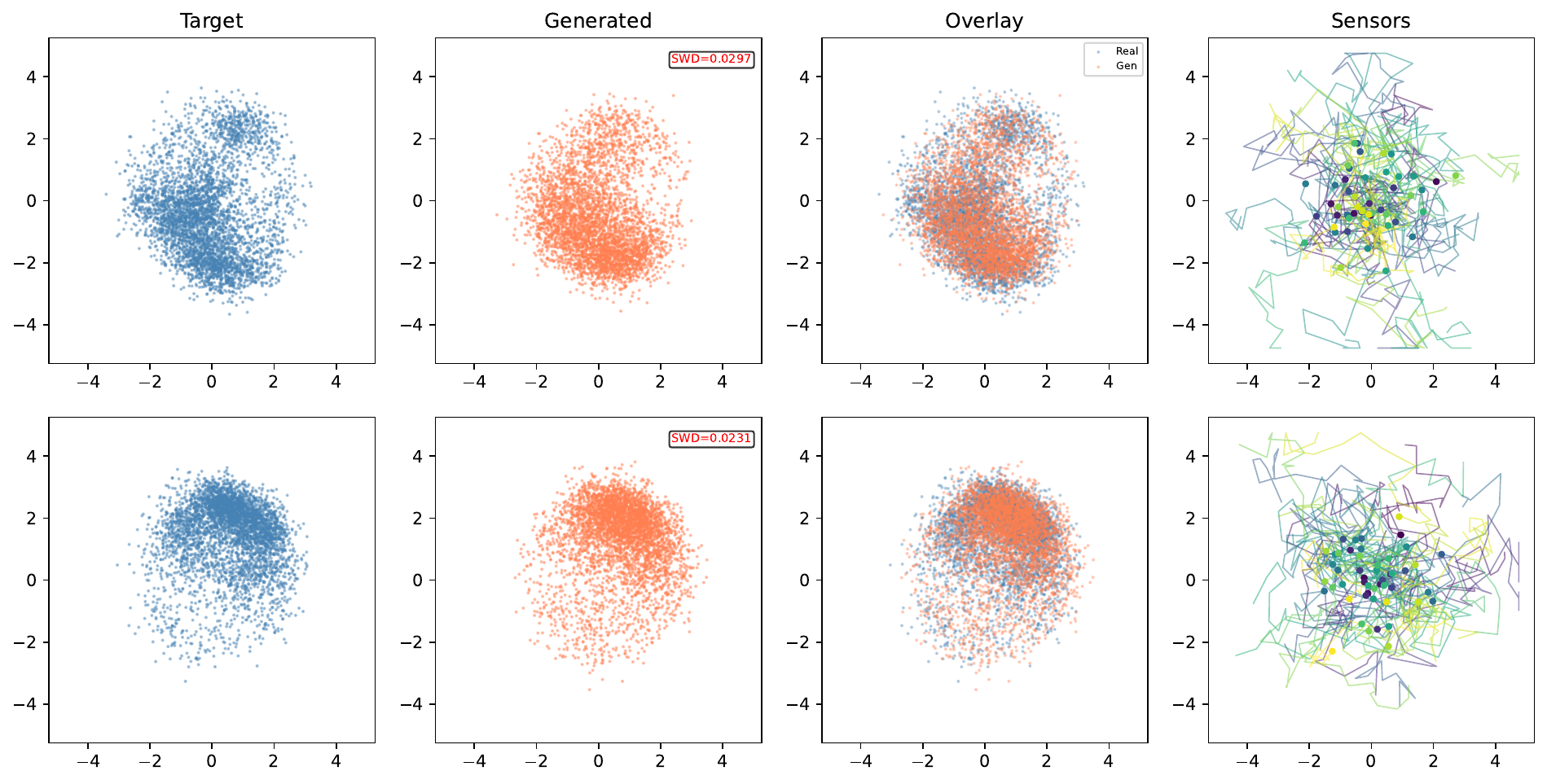}
\caption{\expConstantNoise. Qualitative comparison of target (ground truth) and
generated samples for representative test drift fields.}\label{fig:d2_samples}
\end{figure}

\subsection{\expInteractingParticle}\label{app:particle_details}

Table~\ref{tab:d4_kernels} lists the four interaction kernel families. All kernels are nonsingular. For Morse and WCA, we cap the energy at $U_{\mathrm{MAX}}=10$ to avoid numerical instabilities during MCMC equilibration; at $\beta=2$, the corresponding Boltzmann weight is already $\exp(-\beta U_{\mathrm{MAX}})\approx 2\times10^{-9}$.

\begin{table}[htbp]
\centering
\caption{Interaction kernel families for \expInteractingParticle.}
\label{tab:d4_kernels}
\begin{adjustbox}{max width=\textwidth}
\begin{tabular}{lll}
\toprule
Family & Form & Character \\
\midrule
Morse & $U(r) = D_e(1-e^{-a(r-r_e)})^2 - D_e$ & bondlike attraction with repulsive core \\
Gaussian & $U(r)=\alpha\exp(-r^2/2\sigma^2)$ & soft repulsion ($\alpha>0$) or attraction ($\alpha<0$) \\
Double Gaussian & short range repulsion $+$ long range attraction & preferred interparticle spacing \\
WCA & truncated and shifted Lennard--Jones with softening & hard sphere exclusion \\
\bottomrule
\end{tabular}
\end{adjustbox}
\end{table}

The sensor set for pair probes consists of 512 isolated trajectories of particle pairs per kernel. Each probe starts at a random separation $r_0\in[0.2,4.0]$, is evolved for 20 short steps ($dt=0.001$), and records the relative displacement and interaction force, giving a sensor tensor of shape $(512,20,4)$ per kernel. Reference samples are generated by Euler--Maruyama with $dt=0.001$, 16 chains per system, an equilibration period of $10^5$ steps, and 4{,}096 retained samples per kernel. The training set contains 2{,}048 kernels and the evaluation set contains 512 test kernels; we report metrics on 100 test kernels.

The architecture is the \texttt{ParticleAmortizedSampler} of Section~\ref{sec:perceiver}: a 1D CNN sensor encoder, a Perceiver aggregator with four latent tokens, and an equivariant particle flow network conditioned by AdaLN. The model has 1.78M parameters and is trained for one epoch (2{,}048 optimizer steps) by conditional flow matching with learning rate $10^{-3}$, so the product of learning rate and step count is order one. On a local RTX 5090 system, the full data generation, training, evaluation, and plotting pipeline completes in roughly 40 minutes; dataset generation dominates the cost.

As a scale reference for the conditioning diagnostic in Table~\ref{tab:d4_ablation}, comparing two independent halves of the MCMC target sample under the same sliced 2-Wasserstein score gives a noise floor from the target split of 0.2038 (mean) and 0.1071 (median); the correctly conditioned sampler (1.7860\,/\,1.0159) thus remains above the finite sample floor of the metric. We attribute this gap primarily to the behavior of sliced 2-Wasserstein on finite samples in 64 dimensions rather than to a correspondingly large sampling error; the pair distance distributions in Figure~\ref{fig:d4_gr} give the more direct evidence that the sampler conditions on the interaction kernel.

Table~\ref{tab:d4_obs_metrics} reports the observable checks invariant under permutations that are summarized in Section~\ref{subsec:particles}.

\begin{table}[htbp]
\centering
\caption{\expInteractingParticle. Observable checks invariant under
permutations, evaluated on 100 test kernels with 4{,}096 samples per kernel.}
\label{tab:d4_obs_metrics}
\begin{tabular}{lc}
\toprule
Metric & Value \\
\midrule
Pair correlation $g(r)$ $L^2$ error & 0.6865 \\
Mean marginal KL & $1.21\times 10^{-3}$ \\
Energy KS statistic & 0.1585 \\
Energy mean: model / target ($\Delta$, rel.) & $614.40$ / $638.76$ ($-24.36$, $-3.8\%$) \\
Energy std: model / target ($\Delta$, rel.) & $1472.60$ / $1504.73$ ($-32.13$, $-2.1\%$) \\
\bottomrule
\end{tabular}
\end{table}

\bibliographystyle{plain}
\bibliography{refs}

\end{document}